\newtheorem{thm}{Theorem}[section]
\newtheorem{cor}[thm]{Corollary}
\newtheorem{lem}[thm]{Lemma}
\newtheorem{prop}[thm]{Proposition}
\newtheorem{defn}[thm]{Definition}
\newtheorem{rem}[thm]{Remark}
\newcommand{\na}{\nabla}
\newcommand{\lb}{\Delta}
\newcommand{\et}{e^{-t\lb}}
\newcommand{\f}{\frac}
\newcommand{\si}{\sigma}
\newcommand{\vc}{\infty}
\newcommand{\dx}{d\mu(x)}
\newcommand{\Rie}{\na\lb^{-1/2}}
\newcommand{\LA}{L_kA^{-1/2}}
\newcommand{\VA}{V^{1/2}A^{-1/2}}
\newcommand{\RR}{\mathbb{R}}
\title[Commutators of  singular integrals on Hardy spaces]{ Boundedness of singular integrals and their commutators with BMO functions on Hardy spaces}         
\author{The Anh Bui}
\address{Department of Mathematics, Macquarie University, NSW 2109,
Australia}
\address{Department of Mathematics, University of Pedagogy, HoChiMinh City,
Vietnam}
\email{the.bui@mq.edu.au, bt\_anh80@yahoo.com}
\author{Xuan Thinh Duong}
\address{Department of Mathematics, Macquarie University, NSW 2109,
Australia}
\email{xuan.duong@mq.edu.au}
\keywords{Hardy space, BMO space, Laplace-Beltrami operator, magnetic Schr\"odinger operator, space of homogeneous type}
\subjclass[2010]{42B20, 35B65 Secondary: 35K05, 42B25, 47B38, 58J35}
\begin{document}

\date{}

\maketitle

\begin{abstract}
In this paper, we establish sufficient conditions for a singular integral $T$ to be bounded from
certain Hardy spaces $H^p_L$ to Lebesgue spaces $L^p$, $0< p \le 1$, and for the commutator of $T$
and a BMO function to be weak-type bounded on
Hardy space $H_L^1$. We then show that our sufficient conditions are applicable to  the following cases:
(i) $T$ is the Riesz transform or a
square function associated with the Laplace-Beltrami operator on a doubling Riemannian manifold,
(ii) $T$ is the Riesz transform
associated with the magnetic Schr\"odinger operator on an Euclidean space, and
(iii) $T = g(L) $ is a singular integral operator defined from the holomorphic functional calculus
of an operator $L$ or the spectral multiplier of a non-negative self adjoint operator $L$.
 \end{abstract}

 \tableofcontents

\section{Introduction and statement of main results}

The Calder\'on-Zygmund theory of singular integral operators has been a central part
of Harmonic analysis and has had extensive applications to estimates on regularity of solutions
to partial differential equations.  There are a number of recent works which study singular integral
operators with non-smooth kernels that are beyond the standard Calder\'on-Zygmund theory.
See, for example  \cite{DM}, \cite{CD2}, \cite{BK} and \cite{AM}. This paper is a study in that direction, aiming to
study boundedness of certain singular integral operators and their commutators with BMO functions. Our
results are applicable to large classes of differential and integral operators which include the
Riesz transforms on manifolds, the holomorphic functional calculi and spectral multipliers of
non-negative self adjoint operators such as the Laplace-Beltrami operators on manifolds
and magnetic Schr\"odinger operators on Euclidean spaces.

\medskip

Let us first explain our framework.
Assume that $(X,d,\mu)$ is a metric measure space
endowed with a distance $d$
and a nonnegative Borel doubling
measure $\mu$ on $X$.
Recall that a measure is doubling provided that there exists a
constant $C>0$ such that for all $x\in X$ and for all $r>0$,
\begin{equation}\label{boubling}
V(x,2r)\leq C V(x, r)<\infty,
\end{equation}
\noindent where $B(x,r)=\{y\in X: d(x,y)< r\}$ and  $V(x,r)=\mu(B(x,r)).$
In particular, $X$ is a space of homogeneous type.
A more general definition and further studies of these spaces can be found in
\cite{CW1}.
 Note that the doubling property  implies  the  following
strong homogeneity property,
\begin{equation}\label{doublingproperty1}
V(x, \lambda r )\leq  c\lambda^n V(x, r )
\end{equation}
for some $c, n > 0$ uniformly for all $\lambda \geq 1$ and $x \in
X$. The smallest value of the parameter $n$ in the
right hand side of (\ref{doublingproperty1}) is a measure of the dimension of the space.
There also exist $c$ and $N, 0 \leq N \leq n$, so that
\begin{equation}\label{doublingproperty2}
V(y, r )\leq  c\Big(1+\f{d(x,y)}{r}\Big)^NV(x, r )
\end{equation}
uniformly for all $x, y \in X$ and $r > 0$. Indeed, property
$(\ref{doublingproperty2})$ with $N = n$ is a direct consequence of
the triangle inequality of the metric $d$ and the strong homogeneity
property. \\
To simplify notation, we will often just use $B$ for $B(x_B, r_B)$.
Also given $\lambda > 0$, we will write $\lambda B$ for the
$\lambda$-dilated ball, which is the ball with the same center as
$B$ and with radius $r_{\lambda B} = \lambda r_B$. For each ball
$B\subset X$ we set
$$
S_0(B)=B \ \text{and} \ S_j(B) = 2^jB\backslash 2^{j-1}B \
\text{for} \ j\in \mathbb{N}.
$$
In this paper  we will assume that there exists an operator $L$ defined on $L^2(X)$.
We consider the following conditions:
\smallskip

\noindent
 ${\bf(H 1)}$ $L$ is a non-negative self-adjoint operator on $L^2({X})$;

\smallskip

\noindent
${\bf(H 2)}$ The operator $L$ generates an analytic semigroup
$\{e^{-tL}\}_{t>0}$ which satisfies the Davies-Gaffney estimate.
That is, there exist constants $C$, $c>0$ such that for any open subsets
$U_1,\,U_2\subset X$,

\begin{equation}\label{D-Gestimate}
|\langle e^{-tL}f_1, f_2\rangle|
\leq C\exp\Big(-{{\rm dist}(U_1,U_2)^2\over c\,t}\Big)
\|f_1\|_{L^2(X)}\|f_2\|_{L^2(X)},\quad\forall\,t>0,
\end{equation}

\noindent for every $f_i\in L^2(X)$ with
$\mbox{supp}\,f_i\subset U_i$, $i=1,2$,
where ${\rm dist}(U_1,U_2):=\inf_{x\in U_1, y\in U_2} d(x,y)$.

 \medskip
 \smallskip

\noindent
${\bf(H3)}$ The kernel $p_t(x,y) $ of $e^{-tL}$   satisfies the Gaussian upper bound, i.e.
there exist constants $C$, $c>0$ such that for almost every $x,y \in X$,
\begin{equation}\label{Gaussianub}
|p_t(x,y)|\leq \f{C}{V(x,\sqrt{t})}\exp\Big(-\f{d^2(x,y)}{ct}\Big), \forall t>0.
\end{equation}

\begin{rem}\label{rem00}

It is easy to check that the Gaussian bound ${\bf(H3)}$ implies condition ${\bf(H 2)}$.

We list a number of examples:

 \medskip

(i) It is well known that the Laplace operator $\Delta$ on the Euclidean space $\mathbb R^n$
satisfies ${\bf(H 1)}$  and ${\bf(H3)}$. So do the second order non-negative self-adjoint divergence form
 operators with real bounded measurable coefficients on $\mathbb R^n$. Second order divergence form
 operators with complex bounded measurable coefficients on $\mathbb R^n$ would satisfy ${\bf(H2)}$, and
 satisfy ${\bf (H3)}$ for low dimensions $n$ but might not satisfy ${\bf (H3)}$ for higher dimensions $n$.
 See for example \cite{Da}.

 \medskip

 (ii)  Schr\"odinger operators or magnetic Schr\"odinger operators with real potentials
 satisfy ${\bf(H 1)}$  and ${\bf(H3)}$, see \cite{Si}.

 \medskip

 (iii) Laplace-Beltrami operators
on all complete Riemannian manifolds satisfy ${\bf(H 1)}$ and ${\bf(H 2)}$ but
do not satisfy ${\bf(H3)}$ in general, \cite{Da}.

\medskip

(iv) Laplace type operators acting on vector bundles satisfy ${\bf(H 1)}$ and ${\bf(H 2)}$, see
\cite{S}.

\end{rem}

\bigskip

Our  aim in this paper is to obtain boundedness of certain  singular integral operators with non-smooth kernels
and boundedness of their commutators via estimates
on related function spaces.
  Recently, the theory of Hardy spaces associated with operators was studied by many authors,
see for examples \cite{ADM}, \cite{AMR}, \cite{AR}, \cite{DY2}, \cite{HLMMY}, \cite{HM}, \cite{HMMc},
\cite{DL} and   \cite{Y}. We denote by
 $H^p_L(X), 0<p\leq 1$, the Hardy spaces associated to the operator $L$. For the precise definition, we refer the reader to Section 2.\\

Assume that $T$ is a bounded operator on $L^2(X)$.
There are a number of known sufficient conditions on $T$ or its associated kernel $k(x,y)$ so that
$L^2$ boundedness of $T$  can be extended to other spaces such as Lebesgue space $L^p$, $p\ne 2$,
Hardy spaces, and BMO spaces. See, for example \cite{HLMMY} and \cite{DL} for boundedness of
holomorphic functional calculi of certain generators of analytic semigroups on Hardy spaces.
It is also a natural question to   consider boundedness of the commutator of
a BMO function $b$ and $T$ which is given by
$$
[b,T]f(x):= T((b(x) - b) f)(x)
$$
for all functions $f$ with compact supports. See, for example \cite{St} Chapter 7
and \cite{DY1}.

 In this paper, we establish a sufficient condition on an $L^2$ bounded operator $T$ so that it implies both the following:

 (i) $T$ is bounded  from the Hardy spaces $H^p_L(X)$ to $L^p(X)$, $0<p\leq 1$; and

 (ii) the commutator $[b,T]$ is bounded from $H^1_L(X)$ to $L^{1,\infty}(X)$ under the extra assumption
 that $T$ is of weak type $(1,1)$.

While the boundedness of singular integral operators whose kernels are not smooth
enough to belong to the standard class of
Calder\'on-Zygmund operators and the boundedness of the commutators of BMO functions and
these operators was studied extensively, see for example \cite{DM, BK, ACDH, DY1, AM}
and their references, the boundedness of the commutators of BMO functions and
these operators at
the end-point spaces is much less well known.
 Our main results in this paper include
a sufficient condition so that weak type $(1,1)$ estimate  of $T$ implies certain weak type
boundedness  of its commutators $[b,T]$ and the condition is general enough to be applicable to
a wide range of operators in Sections 4, 5 and 6.
The main result is as follows.

\begin{thm}\label{thm0} Assume that $L$ is an operator which satisfies  ${\bf(H 1)}$ and  ${\bf(H 2)}$.
Let $0 < p \le 1$. Let $a$ denote a $(p,2,m)$-atom in the Hardy space $H^p_L(X)$ associate to the operator $L$.
(See Section 2 for the precise definition).
Assume that $T$ is a bounded operator on $L^2(X)$ so that $Ta$ satisfies the estimate
\begin{equation}\label{cond0}
\Big(\int_{S_j(B)}  |T a|^2
d\mu\Big)^{\frac{1}{2}}\leq C2^{-2jm}V(B)^{\f{1}{2}-\f{1}{p}}
\end{equation}
for any $(p,2,m)$-atom $a$ supported in the ball $B$ and all $j\geq 2$.
Then we have:
\begin{enumerate}[(i)]
\item $T$ is bounded from $H^p_L(X)$ to $L^p(X)$; and
\item in addition, if $T$ is of weak type $(1,1)$ then the commutator $[b,T]$,
where $b$ is a {\rm BMO} function, maps continuously from $H^1_L(X)$ to $L^{1,\infty}(X)$.
\end{enumerate}
\end{thm}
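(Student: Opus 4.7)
The plan is to reduce both parts to a uniform estimate on $(p,2,m)$-atoms via the atomic decomposition of $H^p_L(X)$. Given $f\in H^p_L(X)$, decompose $f=\sum_i\lambda_ia_i$ with atoms $a_i$ supported in balls $B_i$ and $\bigl(\sum_i|\lambda_i|^p\bigr)^{1/p}\lesssim\|f\|_{H^p_L}$; for $p=1$ this is the usual $\ell^1$-sum.

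For part (i), the $p$-subadditivity of the $L^p$ quasinorm for $0<p\le1$ gives
\[
\|Tf\|_{L^p}^p\le\sum_i|\lambda_i|^p\|Ta_i\|_{L^p}^p,
\]
so it suffices to bound $\|Ta\|_{L^p}$ uniformly for a single atom $a$ supported in a ball $B$. I would split
\[
\|Ta\|_{L^p}^p=\sum_{j\ge0}\int_{\MoB}|Ta|^p\,d\mu,
\]
and apply H\"older's inequality on each annulus to obtain $\int_{\MoB}|Ta|^p\,d\mu\le V(2^jB)^{1-p/2}\|Ta\|_{L^2(\MoB)}^p$. For $j=0,1$ I would use $\|Ta\|_{L^2(\MoB)}\le\|T\|_{2\to2}\|a\|_{L^2}\lesssim V(B)^{1/2-1/p}$ from the atomic normalization and the $L^2$-boundedness of $T$; for $j\ge2$ I would invoke the hypothesis \eqref{cond0}. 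Together with the doubling inequality $V(2^jB)\le C2^{jn}V(B)$, the resulting series $\sum_{j\ge2}2^{-2jmp+jn(1-p/2)}$ converges provided $m$ is taken large enough in the definition of $(p,2,m)$-atom, which is the standing assumption.

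For part (ii), fix $b\in{\rm BMO}$ and write each atom's contribution as
\[
[b,T]a_i=(b-b_{B_i})\,Ta_i-T\bigl((b-b_{B_i})a_i\bigr),
\]
where $b_{B_i}$ denotes the mean of $b$ over $B_i$. For the first term I plan to estimate $\|(b-b_{B_i})Ta_i\|_{L^1(X)}$ uniformly by $C\|b\|_{BMO}$: splitting over annuli and using Cauchy--Schwarz,
\[
\int_{S_j(B_i)}|b-b_{B_i}||Ta_i|\,d\mu\le\|b-b_{B_i}\|_{L^2(S_j(B_i))}\|Ta_i\|_{L^2(S_j(B_i))},
\]
where the first factor is $\lesssim(j+1)\|b\|_{BMO}\,V(2^jB_i)^{1/2}$ by the standard John--Nirenberg control of BMO averages, and the second factor is bounded by \eqref{cond0} for $j\ge2$ (and by $L^2$-boundedness of $T$ for $j=0,1$). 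After applying doubling, the series $\sum_j(j+1)\,2^{-2jm+jn/2}$ converges. Summing the resulting $L^1$-estimate against $\sum_i|\lambda_i|$ and applying Chebyshev's inequality yields the weak-$(1,1)$ bound for this piece. For the second term, I would form $g:=\sum_i\lambda_i(b-b_{B_i})a_i$, which lies in $L^1(X)$ with $\|g\|_{L^1}\lesssim\|b\|_{BMO}\|f\|_{H^1_L}$ by Cauchy--Schwarz and the atomic normalization $\|a_i\|_{L^2}\le V(B_i)^{-1/2}$; the assumed weak-$(1,1)$ bound on $T$ then gives $\alpha\,\mu\{|Tg|>\alpha\}\lesssim\|b\|_{BMO}\|f\|_{H^1_L}$.

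The main obstacle I anticipate is justifying the identification $\sum_i\lambda_iT\bigl((b-b_{B_i})a_i\bigr)=T(g)$: since $T$ is not a priori bounded on $L^1$, interchanging $T$ with the $L^1$-convergent series requires an approximation argument — truncate the atomic expansion to finitely many terms, where linearity and $L^2$-continuity of $T$ apply, and pass to the limit in measure using the weak-$(1,1)$ hypothesis. It is precisely at this step that the weak-$(1,1)$ assumption on $T$ is essential.
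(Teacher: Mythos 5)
Your proposal is correct and follows essentially the same route as the paper: reduce to a uniform atom estimate via the $L^2$-convergent atomic decomposition for (i), and for (ii) split $[b,T]a_i=(b-b_{B_i})Ta_i-T((b-b_{B_i})a_i)$, handling the first piece in $L^1$ over annuli using \eqref{cond0} together with the John--Nirenberg growth of BMO averages on dilated balls, and the second by the weak $(1,1)$ bound on $T$. The interchange of $T$ with the atomic series, which you flag as the delicate step, is dealt with in the paper by exactly the truncation-and-limit-in-measure argument you describe (Proposition \ref{prop1} and its analogue in the proof of (ii)).
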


\begin{rem}\label{rem0}

(a) The main result of Theorem \ref{thm0} is the boundedness of the commutator in (ii).
The result in (i) on boundedness of $T$ on Hardy spaces was proved in many situations
and can be considered as folklore.

\medskip

(b) There is no regularity condition on the kernel of $T$, so in general
$T$ is not a standard  Calder\'on-Zygmund singular integral operator
(whose kernel is required to be H\"older continuous or at least to
satisfy the H\"ormander condition).

  \medskip

(c) In Sections 4, 5 and 6, we apply Theorem \ref{thm0} to prove the boundedness of various singular integral
operators and their commutators which  do not belong to the
class of Calder\'on-Zygmund  operators.

  \medskip

(d) It follows from (i) and interpolation (see \cite[Theorem 9.3]{HLMMY}) that $T$ is bounded from $H^p_L(X)$ to $L^p(X)$ for $0<p\leq 2$.
\end{rem}

The paper is organized as follows. In Section 2, we recall the definition of $H^p_L(X)$, the Hardy space
associated to the operator $L$, and some characterizations of $H^p_L(X)$. The proof of Theorem \ref{thm0} is given in
Section 3. In Section 4,  we consider the Riesz transform $T$  on a doubling manifold and use
Theorem \ref {thm0} to obtain some endpoint estimates of commutators of Riesz transforms on manifolds.
In Section 5, we study the boundedness of Riesz transforms of magnetic Schr\"odinger operators and their commutators.
We will show that the Riesz transforms of magnetic Schr\"odinger operators are bounded from
$H^p_A(\mathbb{R}^n)$ to $L^p(\mathbb{R}^n)$ for all $0<p\leq 1$ and the commutators of Riesz transforms and BMO functions
are bounded from $H^1_A(\mathbb{R}^n)$ to $L^{1,\infty}(\mathbb{R}^n)$.
In Section 6, we show boundedness of holomorphic functional calculus and spectral multipliers of an operator
which generates a holomorphic semigroup with suitable kernel bounds.\\

\section{Hardy spaces associated to operators}
The theory of Hardy spaces associated to non-negative self-adjoint operators satisfying the Davies-Gaffney estimate was developed recently by Hofmann et. al. \cite{HLMMY}. Here, we use the definitions and characterizations of Hardy spaces $H^p_L(X)$  in \cite{HLMMY} and \cite{DL}.
\subsection{Hardy spaces $H^p_{L,  S_h}(X)$ for $p\geq 1$ }
Let $L$ be an operator which satisfies  ${\bf(H 1)}$ and  ${\bf(H
2)}$. Set
$$
H^2(X):=\overline{\mathcal{R}(L)}=\{Lu\in L^2(X): u\in \mathcal{D}(L)\}
$$
where $\mathcal{D}(L)$ is the domain of $L$.

It is known that $L^2(X)=\overline{\mathcal{R}(L)}\oplus
\mathcal{N}(L)$, where $\mathcal{R}(L)$ and $\mathcal{N}(L)$ stand
for the range and the kernel of $L$, and the sum is orthogonal.

Consider the following quadratic operators associated to $L$
\begin{equation}
S_{h, K}f(x)=\Big(\int_0^{\infty}\int_{\substack{  d(x,y)<t}}
|(t^2L)^{K}e^{-t^2L} f(y)|^2
\f{d\mu(y)}{V(x,t)}\f{dt}{t}\Big)^{1/2}, \quad x\in X
\end{equation}
where $K$ is a positive integer and $f\in L^2(X)$. We
shall write $S_{h}$ in place of $S_{h, 1}$. For each integer $K\geq
1$ and $1\leq p<\infty$, we now define
\begin{eqnarray*}
D_{K, p} =\Big\{ f\in H^2(X): \ S_{h, K}f\in L^p(X)\Big\}, \ \ \
1\leq p<\infty.
\end{eqnarray*}
\begin{defn}{\label{def2.2}} Let $L$ satisfy $\bf{(H1)}$ and $\bf{(H2)}$. \

(i) For each $1\leq p\leq 2$, the Hardy space $H^p_{L,S_h }(X)$
associated to $L$  is the completion of the space $D_{1, p}$ in the
norm
$$
 \|f\|_{H_{L,S_h }^p(X)}=  \|S_{h}f\|_{L^p(X)}.
$$

(ii) For each $2<p<\infty$, the Hardy space $H^p_{L}(X)$ associated
to $L$ is the completion of the space $D_{K_0, p}$ in the norm
$$
\|f\|_{H_{L,S_h }^p(X)}=  \|S_{h, K_0}f\|_{L^p(X)}, \ \ \ \
K_0=\big[\f{n}{2}\big]+1.
$$
\end{defn}
It can be verified that
 $H^2_{L,S_h }(X)=H^2(X)\subset L^2(X)$ and  the dual space of $H^p_{L,S_h }(X)$ is
$H^{p'}_{L,S_h }(X)$ where $1/p+ 1/p'=1$ (see Proposition 9.4 of
\cite{HLMMY}). If $L$ satisfies $\bf{(H1)}$ and $\bf{(H3)}$, then it was proved in \cite{AMR} that $H^p_{L,S_h }(X)$ and $L^p(X)$ coincide for all $p\in (1,\vc)$.

\medskip

 \subsection{The atomic Hardy spaces $H^p_{L, at, m}(X)$  for $p\leq 1$}
 In what follows, assume that
\begin{eqnarray}
m\in \mathbb{N}\ \ \ \text{and} \ \  m>\f{n(2-p)}{4p}, \label{e2.2}
\end{eqnarray}

\noindent where the parameter $n$ is a constant in (\ref{doublingproperty1}). Let us denote by ${\mathcal D}(T)$
the domain of an operator $T$.

The notion of a  {\it $(p,2, m)$-atom}, $0<p\leq 1$, associated to
operators on spaces $(X,d,\mu)$, is defined as follows .
\begin{defn} \label{def2.3} A function $a\in L^2(X)$ is said to be a
$(p,2, m)$-atom associated to  an operator $L$ if there exist a
function $b\in {\mathcal D}(L^m)$ and a ball $B$ such that

\medskip

{  (i)}\ $a=L^m b$;

\medskip

{  (ii)}\ supp$L^{k}b\subset B, \ k=0, 1, \dots, m$;

\medskip

{  (iii)}\ $||(r_B^2L)^{k}b||_{L^2(X)}\leq
r_B^{2m}V(B)^{\f{1}{2}-\f{1}{p}},\ k=0,1,\dots,m$.

\medskip

\noindent Obviously, in the case $\mu(X)<\infty$ the constant function $[\mu(X)]^{-\f{1}{p}}$ is also considered to be an atom.
\end{defn}

\begin{defn}
Given $0<p\leq 1$ and $m>\f{n(2-p)}{4p}$, we  say that $f=\sum
\lambda_ja_j$ is an atomic $(p,2,m)$-representation if
$\{\lambda_j\}_{j=0}^\infty\in l^p$, each $a_j$ is a $(p,2,m)$-atom,
and the sum converges in $L^2(X)$. Set
$$
\mathbb{H}^p_{L,at,m}(X)=\{f:f \ \text{has an atomic
$(p,2,m)$-representation}\},
$$
with the norm given by
$$
||f||_{\mathbb{H}^p_{L,at,m}(X)}=\inf\{(\sum|\lambda_j|^p)^{1/2}:
f=\sum \lambda_ja_j \ \text{is an atomic $(p,2,m)$-representation}\}.
$$
The space $H^p_{L,at,m}(X)$ is then defined as the completion of
$\mathbb{H}^p_{L,at,m}(X)$ with respect to the quasi-metric $d$
defined by $d(h,g)=||h-g||_{\mathbb{H}^p_{L,at,m}(X)}$ for all
$h,g\in \mathbb{H}^p_{L,at,m}(X)$.
\end{defn}
In this case the mapping $h\rightarrow ||h||_{H^p_{L,at,m}(X)},
0<p<1$ is not a norm and $d(h,g)=||h-g||_{H^p_{L,at,m}(X)}$ is a
quasi-metric. For $p=1$, the mapping $h\rightarrow
||h||_{H^1_{L,at,m}(X)}$ is a norm. A straightforward argument shows
that $H^p_{L,at,m}(X)$ is complete. In particular, $H^1_{L,at,m}(X)$
is a Banach space. A basic
result concerning these spaces is the following proposition.
\begin{prop}
If an operator $L$ satisfies conditions $\bf{(H1)}$ and $\bf{(H2)}$, then for
every $0<p\leq 1$ and for all integers $m\in \mathbb{N}$ with
$m>\f{n(2-p)}{4p}$, the spaces $H^p_{L,at,m}(X)$ coincide and their
norms are equivalent.
\end{prop}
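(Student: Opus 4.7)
The plan is to show two-sided containment between $H^p_{L,at,m_1}(X)$ and $H^p_{L,at,m_2}(X)$ for any two integers $m_1\le m_2$ both satisfying $m_i>n(2-p)/(4p)$, with equivalent quasinorms. The easy direction is $H^p_{L,at,m_2}(X)\subset H^p_{L,at,m_1}(X)$. Given a $(p,2,m_2)$-atom $a=L^{m_2}b$ associated to a ball $B$, I set $\tilde b=L^{m_2-m_1}b$ and verify that $\tilde b$ is a $(p,2,m_1)$-atom (up to an absolute constant). The support condition is immediate from the support condition on $L^kb$, while the size condition follows from
$$
\|(r_B^2L)^k\tilde b\|_{L^2(X)}=r_B^{-2(m_2-m_1)}\bigl\|(r_B^2L)^{k+m_2-m_1}b\bigr\|_{L^2(X)}\le r_B^{2m_1}V(B)^{\frac12-\frac1p}
$$
for $0\le k\le m_1$. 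Hence every atomic $(p,2,m_2)$-representation is automatically an atomic $(p,2,m_1)$-representation with the same coefficient sequence, and the inclusion of the completions follows.

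The nontrivial direction is $H^p_{L,at,m_1}(X)\subset H^p_{L,at,m_2}(X)$. My plan is to use the $L^2$ Calder\'on reproducing formula
$$
f=c_{m_2}\int_0^{\infty}(t^2L)^{m_2+1}e^{-t^2L}f\,\frac{dt}{t},
$$
which is valid on $H^2(X)=\overline{\mathcal R(L)}$ by the spectral theorem guaranteed by $\bf{(H1)}$. Applied to a $(p,2,m_1)$-atom $a$ supported in a ball $B$, this representation, combined with the tent-space decomposition as in \cite{HLMMY}, will express $a$ as a sum $\sum_k\lambda_k\alpha_k$ where each $\alpha_k$ is a $(p,2,m_2)$-atom supported in some dyadic enlargement $2^{j_k}B$, and $\sum_k|\lambda_k|^p\le C$. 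In effect, I would split the $t$-integral into scales $t\sim 2^{-\ell}r_B$ and the spatial integral into annuli $S_j(B)$, recombining pieces on each annular region into atoms of order $m_2$ by taking $L^{m_2}$ of a compactly supported ``primitive'' constructed via the functional calculus.

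The crucial ingredient is the Davies-Gaffney hypothesis $\bf{(H2)}$, which yields $L^2$--$L^2$ off-diagonal estimates of the form
$$
\bigl\|(t^2L)^{m_2+1}e^{-t^2L}\bigr\|_{L^2(E)\to L^2(F)}\le C\exp\bigl(-c\,\mathrm{dist}(E,F)^2/t^2\bigr)
$$
for disjoint sets $E,F\subset X$, together with polynomial decay in $t/d(x,y)$ from the factor $(t^2L)^{m_2+1}$. Combining these bounds with the doubling property \eqref{doublingproperty1} and the atom size estimate for $a$, I obtain a geometric decay in $j$ on each annulus $S_j(B)$ that, after summation over $j$ and $\ell$, is $\ell^p$-summable. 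The main obstacle is precisely the bookkeeping needed to convert $L^2$ off-diagonal estimates into an $\ell^p$-summable atomic decomposition: the lower bound $m>n(2-p)/(4p)$ is exactly what ensures that the polynomial decay $2^{-j(2m+\cdots)}$ beats the growth $V(2^jB)^{1/p-1/2}\lesssim 2^{jn(1/p-1/2)}V(B)^{1/p-1/2}$ of the atomic normalization, yielding $\sum_k|\lambda_k|^p<\infty$.

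An alternative, more economical route is to invoke the fact, established in \cite{HLMMY} (and in \cite{DL} for the range $0<p\le1$), that each $H^p_{L,at,m}(X)$ coincides with the square-function Hardy space $H^p_{L,S_h}(X)$ of Definition \ref{def2.2}, whose definition is manifestly independent of $m$. The construction above is a direct atomic-to-atomic proof of that equivalence restricted to the ``atomic side''; if one accepts the identification with $H^p_{L,S_h}(X)$ then the proposition is immediate. Either way, uniqueness of the completion forces the norms to be equivalent, which finishes the proof.
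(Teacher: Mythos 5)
The paper offers no argument for this proposition at all: it defers entirely to Theorem 5.1 of \cite{HLMMY} (for $p=1$) and Section 3 of \cite{DL} (for $p<1$), so your ``more economical route'' at the end --- identifying each $H^p_{L,at,m}(X)$ with the $m$-independent space $H^p_{L,S_h}(X)$ --- is in fact exactly the paper's proof. Your first inclusion is complete and correct: a $(p,2,m_2)$-atom $a=L^{m_2}b$ is literally a $(p,2,m_1)$-atom with primitive $\tilde b=L^{m_2-m_1}b$ (your computation verifies (ii) and (iii) of Definition \ref{def2.3} with constant $1$), whence $H^p_{L,at,m_2}(X)\subset H^p_{L,at,m_1}(X)$ with the corresponding norm inequality on the dense subspaces. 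Your identification of why the threshold $m>\f{n(2-p)}{4p}$ enters the summability is also correct.

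The direct argument you sketch for the reverse inclusion, however, has a genuine gap. The reproducing formula you display is built from $(t^2L)^{m_2+1}e^{-t^2L}$, and the heat semigroup does \emph{not} preserve compact supports; consequently the candidate primitives obtained by localizing the integrand in $(x,t)$ over annuli and scales are not supported in balls, and the pieces you produce are $(p,2,m_2,\epsilon)$-molecules rather than atoms. The phrase ``compactly supported primitive constructed via the functional calculus'' conceals precisely the missing mechanism. There are two standard repairs: (a) use that, for a non-negative self-adjoint $L$, the Davies--Gaffney estimate ${\bf(H2)}$ is equivalent to finite propagation speed for $\cos(t\sqrt{L})$, and build the Calder\'on reproducing formula from operators $\psi(t\sqrt{L})$ with $\widehat{\psi}$ compactly supported, whose kernels are supported in $\{d(x,y)\le t\}$, so that the primitives genuinely live in dilates of $B$ (this is the device in \cite{HLMMY}); or (b) keep the heat-semigroup formula, accept molecules, and invoke Proposition \ref{mol-pro} for the parameter $m_2$ to convert each molecule into an element of $H^p_{L,at,m_2}(X)$ with uniformly controlled norm --- noting that Proposition \ref{mol-pro} is itself one of the cited results, so this option is only non-circular if one takes it as given. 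Without one of these two steps the decomposition does not terminate in atoms and the argument is incomplete.
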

For the proof, we refer to Theorem 5.1 of \cite{HLMMY} for $p=1$, and Section 3 of \cite{DL} for $p<1$.\\

The notion of a $(p,2,m,\epsilon)$-molecule
associated to an operator $L$ will be described as follows.
\begin{defn}
Let $0<p\leq 1$, $\epsilon>0$ and $m\in \mathbb{N}$. We say that a
function $\alpha \in L^2$ is called a $(p,2,m,\epsilon)$-molecule
associated to $L$ if there exist a function $b\in \mathcal{D}(L^m)$ and a ball
$B$ such that
\begin{enumerate}[(i)]
\item $\alpha = L^mb$;
\item For every $k=0,1,\ldots,m$ and $j=0,1,\ldots,$ the following estimate holds
$$
\|(r_B^2L)^kb\|_{L^2(S_j(B))}\leq
r_B^{2m}2^{-j\epsilon}V(2^jB)^{\f{1}{2}-\f{1}{p}}.
$$
\end{enumerate}
\end{defn}
\begin{prop}\label{mol-pro}
Suppose $0<p\leq 1$, $m>\f{n(2-p)}{4p}$ and $\epsilon>0$. If $\alpha$ is a
$(p,2,m,\epsilon)$-molecule associated to $L$, then $\alpha \in
H^p_{L,at,m}(X)$. Moreover, $\| \alpha \|_{H^p_{L,at,m}(X)}$ is  independent of
$\alpha$.
\end{prop}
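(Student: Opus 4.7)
The plan is to construct an atomic decomposition of $\alpha$ by combining a Calder\'on-type reproducing formula with the atomic decomposition of tent spaces, following the strategy of \cite{HLMMY} and \cite{DL}. Since $b\in\mathcal{D}(L^m)$ one has $\alpha=L^m b\in\overline{\mathcal R(L)}=H^2(X)$, so for an integer $M$ chosen so that $2(M-m)>n(1/p-1/2)$ there is the resolution
$$
\alpha\;=\;c_M\int_0^\infty (t^2L)^M e^{-2t^2L}\alpha\,\frac{dt}{t}\;=\;L^m\Bigl(c_M\int_0^\infty t^{2m-1}e^{-t^2L}F(\cdot,t)\,dt\Bigr),
$$
valid in the strong $L^2$ sense, where $F(x,t):=(t^2L)^{M-m}e^{-t^2L}\alpha(x)$ and $c_M$ is a harmless constant.

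The first key step is to show that $F$ lies in the tent space $T^p_2(X)$ with norm $\lesssim 1$, uniformly in the molecule. One decomposes $X=\bigsqcup_k S_k(B)$ and uses the Davies-Gaffney off-diagonal estimates inherited from ${\bf (H2)}$ for the family $\{(t^2L)^{M-m}e^{-t^2L}\}_{t>0}$, together with the molecular size bounds (ii) of the definition. This produces Gaussian-type decay of $F(x,t)$ off the defining ball $B$, and the choice $2(M-m)>n(1/p-1/2)$ is precisely what makes the ensuing integration in $t$ and summation over the annuli $S_k(B)$ converge, yielding $\|F\|_{T^p_2}\lesssim 1$. Applying the atomic decomposition of $T^p_2$ (Coifman-Meyer-Stein, extended to spaces of homogeneous type) then gives $F=\sum_j\lambda_j A_j$ with $\sum_j|\lambda_j|^p\lesssim 1$ and tent atoms $A_j$ supported in Carleson boxes $\widehat B_j$ over balls $B_j$. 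Substituting back yields $\alpha=\sum_j\lambda_j\,a_j$ with $a_j = L^m\beta_j$ and $\beta_j := c_M\int_0^\infty t^{2m-1}e^{-t^2L}A_j(\cdot,t)\,dt$; a direct check using the support of $A_j$ in $\widehat B_j$, the size bound on the tent atom, and the off-diagonal decay of $e^{-t^2L}$ shows that each $a_j$ is (a fixed multiple of) a $(p,2,m)$-atom associated to $B_j$ in the sense of Definition \ref{def2.3}.

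The main obstacle lies in this last conversion: the heat semigroup $e^{-t^2L}$ does not preserve supports, so the $\beta_j$ produced by the integral formula are only \emph{refined molecules} with rapid off-diagonal decay, not atoms on the nose. This is handled either by iterating the decomposition (so that at each step the residual pieces have smaller cores and stronger decay) or, more efficiently, by invoking the independence of $H^p_{L,at,m}(X)$ on the integer $m>n(2-p)/(4p)$ stated in the proposition immediately preceding the definition of the molecule, which allows one to absorb the rapidly decaying tails into a single larger atom of higher regularity index. Combined with the tent-space bound $\sum_j|\lambda_j|^p\lesssim 1$, this yields $\alpha\in H^p_{L,at,m}(X)$ with norm controlled by the molecular data, uniformly in the molecule $\alpha$.
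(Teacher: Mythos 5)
The paper does not prove this proposition itself; it defers to \cite{HLMMY} ($p=1$) and \cite{DL} ($p<1$), and your outline is indeed the strategy used there: a Calder\'on reproducing formula, a tent-space estimate for $(t^2L)^{M-m}e^{-t^2L}\alpha$ driven by the Davies--Gaffney bounds and the molecular decay on annuli, the Coifman--Meyer--Stein atomic decomposition of $T^p_2$, and a conversion of tent atoms into $(p,2,m)$-atoms. Your first step is sound, and the exponent bookkeeping ($2(M-m)>n(1/p-1/2)$ for the annular sums, $m>n(2-p)/(4p)$ for the $t$-integration against $L^mb$) is the right one.

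The genuine gap is in the last conversion, and the two remedies you offer do not close it. Because $e^{-t^2L}$ does not preserve supports, the functions $\beta_j=c_M\int_0^\infty t^{2m-1}e^{-t^2L}A_j(\cdot,t)\,dt$ violate condition (ii) of Definition \ref{def2.3} (namely $\supp L^k\beta_j\subset B_j$), so the $a_j=L^m\beta_j$ are molecules, not atoms --- which is exactly the statement you are trying to prove, so ``iterating the decomposition'' is circular and does not terminate. Likewise, the independence of $H^p_{L,at,m}(X)$ on the regularity index $m$ says nothing about the support condition and cannot ``absorb the tails.'' The correct tool, and the reason hypotheses ${\bf(H1)}$--${\bf(H2)}$ suffice, is that for a non-negative self-adjoint $L$ the Davies--Gaffney estimate is equivalent to finite propagation speed for $\cos(t\sqrt{L})$; one therefore runs the reproducing formula with $\psi(t\sqrt{L})$ in place of $e^{-t^2L}$, where $\psi$ is even, vanishes to order $2m$ at the origin, and has compactly supported Fourier transform, so that the kernel of $\psi(t\sqrt{L})$ is supported in $\{d(x,y)\le ct\}$ and a tent atom supported in $\widehat{B_j}$ is carried to a function supported in a fixed dilate of $B_j$. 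Alternatively, your first step already shows $\|S_{h}\alpha\|_{L^p}\lesssim 1$, i.e.\ $\alpha\in H^p_{L,S_h}(X)$ with uniform norm, and you may then simply invoke the identification $H^p_{L,S_h}(X)=H^p_{L,at,m}(X)$ of Section 2.3 --- but that identification is proved in \cite{HLMMY,DL} by precisely the finite-propagation-speed argument above, so it cannot be avoided, only outsourced.
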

For the proof, we refer to \cite{HLMMY} for $p=1$, and \cite{DL} for
$p<1$.

\subsection{A characterization of Hardy spaces associated to operators in terms of square functions}

In Section 2.1, we had the definitions of the Hardy spaces $H^p_{L,S_h}(X)$ for
$p\geq 1$. Now consider the case $0<p<1$.  The space
$H^p_{L,S_h}(X)$ is defined as the completion of
$$
\{f\in H^2(X):||S_hf||_{L^p(X)}<\infty\}
$$
under the norm defined by the $L^p$ norm of the square function; i.e.,
$$
||f||_{H^p_{L,S_h}(X)}=||S_hf||_{L^p(X)}, \ 0<p<1.
$$

Then the ``square function" and ``atomic'' $H^p$ spaces are
equivalent, if the parameter $m>\frac{n(2-p)}{4p}$. In fact, we have
the following result.
\begin{prop}
Suppose $0<p\leq 1$ and $m>\frac{n(2-p)}{4p}$. Then we have
$H^p_{L,at,m}(X)=H^p_{L,S_h}(X)$ and their norms are equivalent.
\end{prop}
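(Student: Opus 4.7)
The plan is to establish the two inclusions separately, each reducing to an estimate on a single building block.

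\emph{Direction $H^p_{L,at,m}(X)\subseteq H^p_{L,S_h}(X)$.} By sub-additivity of $\|\cdot\|_{L^p}^p$, it suffices to verify a uniform bound $\|S_h a\|_{L^p(X)}\le C$ for every $(p,2,m)$-atom $a=L^m b$ associated to a ball $B=B(x_B,r_B)$. Split $X=4B\cup\bigcup_{j\ge 2}S_j(B)$. On $4B$, use the $L^2$-boundedness of $S_h$ on $H^2(X)$ (standard from ${\bf(H1)}$), the atom size estimate $\|a\|_{L^2}\le V(B)^{1/2-1/p}$, Hölder's inequality, and doubling. On each annulus $S_j(B)$, split the $t$-integral defining $S_h a(x)^2$ at $t=2^{j-2}r_B$. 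For $t\le 2^{j-2}r_B$, apply the Davies--Gaffney estimate ${\bf(H2)}$ to $(t^2L)e^{-t^2L}a$, exploiting $\supp a\subseteq B$, to produce exponential decay in $2^j r_B/t$. For $t>2^{j-2}r_B$, write $(t^2L)e^{-t^2L}a=(r_B/t)^{2m}(t^2L)^{m+1}e^{-t^2L}b$ and use the uniform $L^2$-boundedness of the holomorphic semigroup to extract the factor $(r_B/t)^{2m}$. Summing in $j$ using the hypothesis $m>n(2-p)/(4p)$ yields the required uniform bound.

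\emph{Direction $H^p_{L,S_h}(X)\subseteq H^p_{L,at,m}(X)$.} Apply the Calderón reproducing formula
\begin{equation*}
f=c_m\int_0^{\infty}(t^2L)^{m+1}e^{-2t^2L}f\,\frac{dt}{t},\qquad f\in H^2(X),
\end{equation*}
and set $F(y,t):=(t^2L)e^{-t^2L}f(y)$, so that the hypothesis $S_hf\in L^p(X)$ translates to $F\in T^p_2$ with $\|F\|_{T^p_2}\approx\|S_hf\|_{L^p(X)}$. The atomic decomposition of tent spaces for $0<p\le 1$ (Coifman--Meyer--Stein for $p=1$; for $p<1$ see \cite{DL}) produces $F=\sum_j\lambda_jA_j$ with each $A_j$ a $T^p_2$-atom supported in a tent $\widehat{B_j}$ and $(\sum_j|\lambda_j|^p)^{1/p}\lesssim\|F\|_{T^p_2}$. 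Setting
\begin{equation*}
\alpha_j:=L^m\beta_j,\qquad \beta_j:=c_m'\int_0^{\infty}t^{2m}e^{-t^2L}A_j(\cdot,t)\,\frac{dt}{t},
\end{equation*}
one has $f=\sum_j\lambda_j\alpha_j$ up to a universal constant. Off-diagonal $L^2$-bounds for the family $\{(t^2L)^ke^{-t^2L}\}_{0\le k\le m}$, derived from ${\bf(H2)}$, combined with $\supp A_j\subseteq\widehat{B_j}$, imply that each $\alpha_j$ is a $(p,2,m,\epsilon)$-molecule for some $\epsilon>0$. Proposition \ref{mol-pro} then gives $\alpha_j\in H^p_{L,at,m}(X)$ with uniformly controlled quasi-norm, and summability of the coefficients delivers $f\in H^p_{L,at,m}(X)$ with the desired estimate.

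The main obstacle is the molecular verification in the reverse direction: one must show that $\|(r_{B_j}^2L)^k\beta_j\|_{L^2(S_i(B_j))}\le C\,r_{B_j}^{2m}\,2^{-i\epsilon}V(2^iB_j)^{1/2-1/p}$ for every $k=0,\dots,m$ and $i\ge 0$, which requires carefully matching the Davies--Gaffney decay of each $(t^2L)^ke^{-t^2L}$ against the tent support structure of $A_j$ over the $t$-scales $0<t\le r_{B_j}$. The exponent constraint $m>n(2-p)/(4p)$ is exactly what converts the polynomial gain from $(r_B/t)^{2m}$ into a positive decay exponent $\epsilon$ large enough to invoke Proposition \ref{mol-pro}. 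The case $p=1$ is carried out in \cite{HLMMY} and the case $0<p<1$ in \cite{DL}.
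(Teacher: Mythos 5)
Your outline is correct and follows exactly the route of the proof the paper relies on: the paper itself gives no argument but simply cites \cite{DL} (and \cite{HLMMY} for $p=1$), and those references prove the equivalence precisely via the two steps you describe — uniform $L^p$ bounds on $S_h a$ for atoms using Davies--Gaffney off-diagonal decay, and the reverse inclusion via the Calder\'on reproducing formula, the tent-space atomic decomposition, and the molecular criterion of Proposition \ref{mol-pro}. Your sketch appropriately defers the detailed molecular verification to the same sources the paper cites, so there is nothing to compare beyond noting the match.
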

\emph{Proof:} For the proof, see \cite{DL}.\\

Consequently, as in the next definition, one may write $H^p_{L,at}(X)$
in place of $H^p_{L,at,m}(X)$ when $m>\frac{n(2-p)}{4p}$. Precisely, we
have the following definition.
\begin{defn}
The Hardy space $H^p_{L}(X), 0<p\leq 1$, is the space
$$
H^p_{L}(X):=H^p_{L,S_h}(X):=H^p_{L,at}(X):=H^p_{L,at,m}(X),\
m>\frac{n(2-p)}{4p}.
$$
\end{defn}

\section{Boundedness of singular integral operators and their commutators}
To prove that an operator $T$ is bounded on the Hardy space $H^p_L(X)$ which possesses an atomic
decomposition, it is not enough in general to prove that $Ta$ is uniformly bounded
for all atomic functions $a$. However, if the operator
 $T$ satisfies extra condition such as being $L^2(X)$ bounded
 (or even the weaker condition of weak type $(2,2)$), then
 the uniform boundedness of $Ta$ does imply the boundedness of
$T$ on $H^p_L(X)$. More precisely, we have the following result.

\begin{prop}\label{prop1}
Suppose that $T$ is a linear (resp. nonnegative sublinear) operator which
maps $L^2(X)$ continuously into $L^{2,\vc}(X)$.
If there exists, for $0<p\leq 1$, a
constant $C$ such that
$$
||Ta||_{L^{p}}\leq C
$$
for all $(p,2,m)$-atoms $a \in H^p_{L}(X)$, then $T$ extends to a
bounded linear (resp. sublinear) operator from $H^p_{L}(X)$
to $L^{p}(X)$.
\end{prop}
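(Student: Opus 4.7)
The plan is to exploit the $p$-subadditivity of the $L^p$ quasi-norm for $0<p\le 1$, namely $\|\sum_k g_k\|_{L^p}^p \le \sum_k \|g_k\|_{L^p}^p$, combined with the hypothesis on atoms and the continuity of $T$ from $L^2(X)$ into $L^{2,\infty}(X)$. By the definition of $H^p_L(X)$ as the completion of $\mathbb{H}^p_{L,at,m}(X)$, it suffices to prove the estimate $\|Tf\|_{L^p(X)}\le C\|f\|_{H^p_L(X)}$ for every $f\in\mathbb{H}^p_{L,at,m}(X)$, and then extend $T$ by density.

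Fix such an $f$ and choose an atomic representation $f=\sum_{j=0}^\infty\lambda_j a_j$ with $\sum_j|\lambda_j|^p\le 2\|f\|_{H^p_L(X)}^p$. Since this series converges in $L^2(X)$, the partial sums $f_N=\sum_{j=0}^N\lambda_j a_j$ satisfy $f_N\to f$ in $L^2(X)$. I would first argue that $Tf_N\to Tf$ in measure. In the linear case this is immediate from the $L^2\to L^{2,\infty}$ continuity. In the nonnegative sublinear case one uses the pointwise inequality $|Tf-Tf_N|\le T(f-f_N)$ (which follows from sublinearity together with $T(-g)=T(g)$), combined with
\[
\|T(f-f_N)\|_{L^{2,\infty}(X)}\le C\|f-f_N\|_{L^2(X)}\to 0,
\]
to deduce convergence of $Tf_N$ to $Tf$ in $L^{2,\infty}(X)$, and hence in measure. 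On the other hand, from sublinearity and $p$-subadditivity, the hypothesis on atoms yields the uniform bound
\[
\|Tf_N\|_{L^p(X)}^p\le \sum_{j=0}^N|\lambda_j|^p\|Ta_j\|_{L^p(X)}^p\le C^p\sum_{j=0}^N|\lambda_j|^p\le 2C^p\|f\|_{H^p_L(X)}^p.
\]

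Extracting a subsequence of $\{Tf_N\}$ converging a.e.\ to $Tf$ and applying Fatou's lemma gives $\|Tf\|_{L^p(X)}\le 2^{1/p}C\|f\|_{H^p_L(X)}$. Taking the infimum over atomic representations and invoking a standard density argument (using $|Tf-Tg|\le T(f-g)$ in the sublinear case, and linearity in the linear case, to control differences) then produces the desired bounded extension. The main technical point is reconciling two different modes of convergence: continuity of $T$ on $L^2$ provides convergence of $Tf_N$ in $L^{2,\infty}$, which is what identifies the limit as $Tf$, while the atomic hypothesis controls $Tf_N$ uniformly in $L^p$; Fatou's lemma is the bridge that fuses these into an $L^p$ bound on $Tf$ itself.
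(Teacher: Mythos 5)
Your proposal is correct and takes essentially the same route as the paper: both use the $L^2$-convergence of the atomic series together with the weak-type $(2,2)$ continuity of $T$ to identify $Tf$ as the limit in measure of $T f_N$, and then combine the uniform atom bound with the $p$-subadditivity of the $L^p$ quasi-norm. Your use of Fatou's lemma along an a.e.\ convergent subsequence of $\{Tf_N\}$ is a slightly more careful bookkeeping of the same argument (the paper instead passes to the pointwise domination $|Tf|\le\sum_j|\lambda_j||Ta_j|$), so no substantive difference.
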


\begin{proof} The proof of this proposition is standard. For completeness and convenience
of reader, we give a proof here.

Suppose that $f\in H^p_L(X)\cap H^2(X)$ so that we
may write $f=\sum_{j=1}^\vc \lambda_j a_j$ in $L^2$ sense where $a_j$ are
$(p,2,m)$-atoms and $\Big(\sum_{j=1}^\vc|\lambda_j|^p\Big)^{1/p}\approx
||f||_{H^p_{L}(X)}$. It suffices to show that $|Tf|\leq \sum_{j=1}^\vc
|\lambda_j| |Ta_j|$. \\

 If $T$ is a linear operator, then from the
fact that the sum $\sum_{j=1}^\vc \lambda a_j$ converges in $L^2(X)$
and $T$ is of weak type $(2,2)$, we conclude that $Tf=\sum_{j=1}^\vc
\lambda_j Ta_j$. \\

If $T$ is a nonnegative sublinear
operator and $T$ is of weak type $(2,2)$, one has
\begin{equation*}\begin{aligned}
\mu\Big\{x\in X: \Big|Tf-T\Big(\sum_{j=1}^N \lambda_j
a_j\Big)\Big|>t\Big\}&\leq   \f{C}{t^2}\Big|\Big| \sum_{j=N+1}^\vc \lambda_j
a_j\Big|\Big|_{L^2(X)} .
\end{aligned}\end{equation*}
Hence
\begin{equation*}\begin{aligned}
\lim_{N\rightarrow\vc} \mu\Big\{x\in X: \Big|Tf-T\Big(\sum_{j=1}^N \lambda_j
a_j\Big)\Big|>t\Big\}&\leq  C\lim_{N\rightarrow\vc}\f{1}{t^2}\Big|\Big| \sum_{j=N+1}^\vc \lambda_j
a_j\Big|\Big|_{L^2(X)}= 0.
\end{aligned}\end{equation*}

This implies  that $T\Big(\sum_{j=1}^N \lambda_j
a_j\Big) \rightarrow Tf$ a.e. as $N\rightarrow \infty$. Since $T$ is a nonnegative sublinear operator, we have
\begin{equation*}\begin{aligned}
Tf - \sum_{j=1}^{\infty}T(\lambda_j a_j)&= Tf - T\Big(\sum_{j=1}^N \lambda_j
a_j\Big)+ T\Big(\sum_{j=1}^N \lambda_j
a_j\Big)-\sum_{j=1}^{\infty}T(\lambda_j a_j)\\
&\leq Tf - T\Big(\sum_{j=1}^N \lambda_j
a_j\Big) \rightarrow 0, \ \text{as $N\rightarrow \infty$}.
\end{aligned}\end{equation*}
Thus, $Tf\leq \sum_{j=1}^{\infty}T(\lambda_j a_j)$. The proof is complete.
\end{proof}

\emph{Proof of Theorem \ref{thm0}:}
(i) Proposition \ref{prop1}, it suffices to show that for any $(p,2, m)$-atom $a$, for $m>\f{n(2-p)}{4p}$, associated to the ball $B$, we have $||T a||_{L^p}\leq C$.\\
Indeed, we have
\begin{equation*}
\begin{aligned}
\int_X|T a(x)|^p \dx &= \sum_{j=0}^\infty \int_{S_j(B)}|T a(x)|^p \dx \\
&= \sum_{j=0}^\infty K_j.
\end{aligned}
\end{equation*}
By Jensen's and H\"older's inequalities and (\ref{cond0}), one has, for each $j$,
\begin{equation*}
\begin{aligned}
K_j&\leq V(S_j(B))^{1-\f{p}{2}}||Ta||^p_{L^2(S_j(B))}\\
&\leq CV(2^jB)^{1-\f{p}{2}}2^{-2jmp}V(B)^{\f{p}{2}-1}\\
&\leq C2^{j(n-\f{np}{2})-2mp}.
\end{aligned}
\end{equation*}
This together with $m>\f{n(2-p)}{4p}$ gives
$$
\int_X|T a(x)|^p \dx\leq C\sum_{j=0}^\infty 2^{j(n-\f{np}{2})-2mp}\leq C.
$$
 The proof of (i) is complete.\\

(ii) Suppose that $f\in H^1_L(X)\cap H^2(X)$ so that we
may write $f=\sum_{j=1}^\vc \lambda_j a_j$ in $L^2$ sense where $a_j$ are
$(1,2,m)$-atoms associated to balls $B_j$ and $\sum_{j=1}^\vc|\lambda_j|\approx
||f||_{H^1_{L}}$. It
suffices to show that there exists a constant $c>0$ such that
$$
\mu\{x\in X: |[b,T](\sum_{j=1}^\vc \lambda_j a_j)(x)|>\lambda\}\leq \f{c}{\lambda}\|f\|_{H^1_L}||b||_{BMO}
$$

Using the commutator technique as in \cite{Pe}, we can assume that $b\in L^\vc$. Setting $\displaystyle b_B=\f{1}{V(B)}\int_Bbd\mu$, we have
\begin{equation*}\begin{aligned}
\Big|[b,T]f(x)\Big|&=\Big|T((b(x)-b)f)(x)\Big|\\
&\leq \Big|T((b(x)-b)(\sum_{j\geq 0}\lambda_j a_j))(x)\Big|\\
&\leq \Big|T\Big[\sum_{j\geq 0}\lambda_j(b(x)-b_{B_j}) a_j\Big](x)\Big|+\Big|T\Big[\sum_{j\geq 0}\lambda_j(b-b_{B_j}) a_j\Big](x)\Big|\\
\end{aligned}
\end{equation*}
where in the last inequality we use that fact that both series $\sum_{j\geq 0}\lambda_j(b(x)-b_{B_j}) a_j$ and $\sum_{j\geq 0}\lambda_j(b-b_{B_j}) a_j$ converge in $L^1(X)$.

At this stage, by the similar argument as in Proposition \ref{prop1}, we can write
$$
\Big|T\Big[\sum_{j\geq 0}\lambda_j(b(x)-b_{B_j}) a_j\Big](x)\Big|\leq \sum_{j}|\lambda_j|\Big|[b(x)-b_{B_j}]Ta_j(x)\Big|.
$$
This gives
\begin{equation*}\begin{aligned}
\Big|[b,T]f(x)\Big|&=\Big|T((b(x)-b)f)(x)\Big|\\
&\leq\sum_{j}|\lambda_j|\Big|[b(x)-b_{B_j}]Ta_j(x)\Big| + \Big|T(\sum_{j}\lambda_j[b_{B_j}-b]a_j)(x)\Big|.
\end{aligned}
\end{equation*}
Therefore,
\begin{equation*}\begin{aligned}
\lambda\mu\{x\in X: |[b,T]f(x)|>\lambda\}&\leq \lambda\mu\{x\in X:
\sum_{j}|\lambda_j| |[b(x)-b_{B_j}]Ta_j(x)|>\lambda/2\}\\
&~~+\lambda\mu\{x\in X: |T(\sum_{j}\lambda_j[b_{B_j}-b]a_j)(x)|>\lambda/2\}\\
&:=E_1+ E_2.
\end{aligned}\end{equation*}
Let us estimate $E_2$ first. Since $T$ is of weak type $(1,1)$, one
has, by H\"older's inequality
\begin{equation*}\begin{aligned}
E_2&\leq C \sum_{j\geq 0}|\lambda_j|\int_X |[b_{B_j}-b(x)]a_j(x)|\dx\\
&\leq C \sum_{j\geq 0}|\lambda_j| \|(b_{B_j}-b)\|_{L^2(B_j)}\|a_j\|_{L^2(B_j)}\\
&\leq C \sum_{j\geq 0}|\lambda_j|\|b\|_{BMO}V(B_j)^{1/2}V(B_j)^{-1/2}
\leq C\|f\|_{H^1_L}\|b\|_{BMO}.
\end{aligned}\end{equation*}

We now estimate $E_1$. Obviously,
\begin{equation*}\begin{aligned}
E_1&\leq C\sum_{j\geq 0}|\lambda_j|\int_X |[b(x)-b_{B_j}]Ta_j(x)|\dx\\
&= C\sum_{j\geq 0}|\lambda_j|\sum_{k=0}^\vc \int_{S_k(B_j)} |[b(x)-b_{B_j}]Ta_j(x)|\dx\\
&\leq C\sum_{j\geq 0}|\lambda_j|\sum_{k=0}^\vc \int_{S_k(B_j)} |[b(x)-b_{2^kB_j}]Ta_j(x)|\dx\\
&~~~+C\sum_{j\geq 0}|\lambda_j|\sum_{k=0}^\vc \int_{S_k(B_j)} |[b_{B_j}-b_{2^kB_j}]Ta_j(x)|\dx.
\end{aligned}\end{equation*}
By H\"older's inequality, (\ref{cond0}) and the
fact that $|b_{B_j}-b_{2^kB_j}|\leq ck||b||_{BMO}$, we have,
\begin{equation}\label{es1}\begin{aligned}
\int_{S_k(B_j)}
|[b(x)-b_{2^kB_j}]Ta_j(x)|\dx
&\leq C||[b(x)-b_{2^{k}B_j}||_{L^2(2^{k}B_j)}||Ta_j||_{L^2(S_k(B_j))}\\
&\leq C V(2^{k}B_j)^{1/2}||b||_{BMO}2^{-2mk}V(B_j)^{-1/2}\\
&\leq C 2^{k(\f{n}{2}-2m)}||b||_{BMO}.
\end{aligned}\end{equation}
and
\begin{equation}\label{es2}
\begin{aligned}
\int_{S_k(B_j)}
|[b_{2^{k}B_j}-b_{B_j}]Ta_j|\dx
&\leq Cj\|b\|_{BMO}\int_{S_{k}(B_j)}
|Ta_j|\dx\\
&\leq C kV(2^{k}B_j)^{1/2}\|b\|_{BMO}2^{-2mk}V(B_j)^{-1/2}\\
&\leq C k2^{k(\f{n}{2}-2m)}\|b\|_{BMO}.
\end{aligned}
\end{equation}
The estimates (\ref{es1}), (\ref{es2}) together with $m>\f{n}{4}$ imply that
$$E_1\leq C\sum_{j\geq 0}|\lambda_j|\|b\|_{BMO}\approx \|f\|_{H^1_L}\|b\|_{BMO}.$$

It remains to extend $[b, T]$ to the whole space $H^1_L(X)$. For each $f\in H^1_L$, there exists a sequence $(f_n)$ in $H^1_L(X)\cap H^2(X)$ so that $f_n\rightarrow f$ in $H^1_L(X)$. We have, for all $k,n\in \mathbb{N}$,
$$
\Big\| [b, T]f_n - [b, T]f_k\Big\|_{L^{1,\vc}(X)}\leq \Big\| [b, T](f_n - f_k)\Big\|_{L^{1,\vc}(X)}\leq C\|b\|_{BMO}\|f_n-f_k\|_{H^1_L(X)}.
$$
Since $L^{1,\vc}(X)$ is complete, we can define $[b, T]f = \lim_{n\rightarrow \infty} [b, T]f_n$ in $L^{1,\vc}(X)$. It can be verified that
$$
\Big\| [b, T]f\Big\|_{L^{1,\vc}(X)}\leq C\|b\|_{BMO}\|f\|_{H^1_L(X)}.
$$

The proof of (ii) is complete.
\begin{flushright}
    $\Box$
\end{flushright}

\begin{rem}\label{rem0'}

(a) In our approach, to obtain (\ref{cond0}), we split
$$Ta=T(I-e^{-r_B^2L})^ma+ T(I-(I-e^{-r_B^2L})^m)a .$$
Observe that
$$ I-(I-e^{-r_B^2L})^m =
\sum_{k=1}^{m}c_ke^{-kr_B^2L},
$$
where $c_k=(-1)^{k+1}\f{m!}{(m-k)!k!}$. Therefore,
\begin{equation*}
T[I-(I-e^{-r_B^2L})^m]a =
\sum_{k=1}^{m}a_kT\Big(r_B^2L
e^{-\f{k}{m}r_B^2L}\Big)^m (r_B^{-2m}b),
\end{equation*}
where $a=L^m b$ and $a_k=c_k\Big(\f{k}{m}\Big)^m$.\\
Therefore, it is not difficult to see that condition (\ref{cond0}) holds if the following two estimates are satisfied:
\begin{equation}\label{cond1}
\Big(\int_{S_j(B)}|T(I-e^{-r_B^2L})^m f|^2\
d\mu\Big)^{\frac{1}{2}}\leq C2^{-2jm}\Big(\int_B|f|^2d\mu\Big)^{1/2}
\end{equation}
and
\begin{equation}\label{cond2}
\Big(\int_{S_j(B)}| T(r_B^{2m} L^me^{-k r_B^2L}) g  |^2\
d\mu\Big)^{\frac{1}{2}}\leq C2^{-2jm}\Big(\int_B|g|^2d\mu\Big)^{1/2}
\end{equation}
 for all integers  $j\geq 2$, $k = 1, \cdots , m$ and
 for all $f$ and $g$ with their supports contained in the ball $B$. \\

 (b) Theorem \ref{thm0}  still holds if the value $2^{-2jm}$ in (\ref{cond0}),
(\ref{cond1})  and (\ref{cond2})
is replaced by $2^{-2j\delta}$ for  some $\delta >\f{n(2-p)}{4p}$.\\

(c) The estimates in (\ref{cond1})  and (\ref{cond2}) do not hold without the
terms $(I-e^{-r_B^2L})^m$ and $r_B^{2m} L^me^{-k r_B^2L}$ in the left hand sides, respectively.
The effect of these terms is to make the kernels of  $T(I-e^{-r_B^2L})^m$
and $T(r_B^{2m} L^me^{-k r_B^2L})$ decay faster than the kernel of $T$ when $x$ is away from $y$.\\
 \end{rem}

\section{Commutators of BMO functions and the Riesz transforms or square functions on doubling manifolds}
Let $X$ be a complete non-compact connected Riemannian manifold,
$\mu$ the Riemannian measure, $\na$ the Riemannian gradient. Denote
by $|\cdot|$ the length in the tangent space, and by $||\cdot||_p$
the norm in $L^p(X, \mu), 1 \leq p \leq \vc$. For simplicity we will
write $L^p(X)$ instead of $L^p(X,\mu)$. Let $\lb$ be the Laplace-Beltrami operator.
Denote by $B(x, r)$ the open ball of radius $r > 0$ and center $x
\in X$, and by $V (x, r)$ its measure $\mu(B(x, r))$. Throughout this
section, assume that $X$ satisfies the doubling property (\ref{boubling}).
 It is well-known that the Laplace-Beltrami operator $\lb$ satisfies conditions $\bf{(H1)}$ and $\bf{(H2)}$.
 So let us denote  the Hardy space associated to $\lb$ by $H^1_\lb(X)$.\\

Let us consider $T = \Rie$, the Riesz transform on $X$, and take
$b \in$ BMO$(X)$ (the space of functions of bounded mean oscillations on
$X$). We define the commutator
$$
[b, T ]g = b T g - T (b g),
$$
where $g, b$ are scalar valued and $[b, T ]g$ is valued in the
tangent space. In \cite{AM}, it was proved that for any
$b\in BMO(X)$, under the doubling condition and Gaussian upper bound for the heat kernel, the commutator $[b,T]$ is bounded on $L^{p}(X)$ with appropriate weights, for $1<p< 2$. The case of end-point value $p=1$ was not considered in \cite{AM}.\\

Our following theorem gives the endpoint estimate for the commutator $[b, T]$ when $p=1$.
\begin{thm}\label{thm1}
Assume that $X$ satisfies the doubling property (\ref{boubling}) and $b$ is a function in
BMO$(X)$. Then, the Riesz transform $T=\Rie$ is bounded from $H^p_\lb(X)$ to $L^p(X)$, for all $0<p\leq 1$. Moreover, if the Riesz transform $T=\Rie$ is of weak type $(1,1)$ then the commutator $[b, T]$ maps $H^1_\lb(X)$ continuously into
$L^{1,\vc}(X)$.
\end{thm}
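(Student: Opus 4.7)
The plan is to deduce Theorem \ref{thm1} from Theorem \ref{thm0} applied with $L=\lb$. Since the Laplace--Beltrami operator on a complete Riemannian manifold satisfies ${\bf(H1)}$ and ${\bf(H2)}$ by Remark \ref{rem00}(iii), and since the Riesz transform $T=\Rie$ is bounded on $L^2(X)$ via the standard identity $\|\na u\|_{L^2}^2=\langle\lb u,u\rangle$, the abstract hypotheses of Theorem \ref{thm0} are in place. What remains is to verify the off-diagonal condition (\ref{cond0}) for $T$; part (i) of Theorem \ref{thm1} then follows from Theorem \ref{thm0}(i), while part (ii) follows from Theorem \ref{thm0}(ii) upon invoking the assumed weak-type $(1,1)$ bound on $T$.

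By Remark \ref{rem0'}(a), it suffices to establish the two auxiliary inequalities (\ref{cond1}) and (\ref{cond2}) for $T$. The key tool is the $L^2$ off-diagonal estimate
\begin{equation*}
\bigl\|\sqrt{t}\,\na e^{-t\lb}f\bigr\|_{L^2(F)} \leq C\exp\Big(-\f{d(E,F)^2}{ct}\Big)\|f\|_{L^2(E)}\qquad (\supp f\subset E),
\end{equation*}
which is a standard consequence of the Davies--Gaffney bound ${\bf(H2)}$ combined with the Caccioppoli-type identity $\partial_s\|e^{-s\lb}f\|_{L^2}^2=-2\|\na e^{-s\lb}f\|_{L^2}^2$. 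Combining this with the analyticity bound $\|(u\lb)^m e^{-u\lb}\|_{L^2\to L^2}\leq C$ and the semigroup splitting $\na\lb^m e^{-u\lb}=(\na e^{-(u/2)\lb})\,(\lb^m e^{-(u/2)\lb})$, one obtains the weighted off-diagonal estimate
\begin{equation*}
\bigl\|\na\lb^m e^{-u\lb}f\bigr\|_{L^2(F)}\leq C\,u^{-m-1/2}\exp\Big(-\f{d(E,F)^2}{cu}\Big)\|f\|_{L^2(E)}
\end{equation*}
for every $u>0$ whenever $\supp f\subset E$.

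To conclude, I would express $\na\lb^{-1/2}$ via the subordination formula $\lb^{-1/2}=\pi^{-1/2}\int_0^\infty s^{-1/2}e^{-s\lb}\,ds$ and rewrite $(I-e^{-r_B^2\lb})^m$ as the iterated integral $\int_0^{r_B^2}\!\!\cdots\int_0^{r_B^2}\lb^m e^{-(s_1+\cdots+s_m)\lb}\,ds_1\cdots ds_m$. Inserted into (\ref{cond1}) and (\ref{cond2}), everything reduces to time integrals of $\na\lb^m e^{-u\lb}$ acting on functions supported in $B$. Splitting the integration region at $u\sim r_B^2$ and using $d(B,S_j(B))\sim 2^jr_B$, the exponential factor yields rapid decay in $j$ for small $u$ while the polynomial factor $u^{-m-1/2}$ yields decay of order $(2^jr_B)^{-2m}$ for large $u$; after multiplication by the appropriate power of $r_B$ this produces the required bound $C\,2^{-2jm}\|f\|_{L^2(B)}$. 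The hardest part is purely technical bookkeeping: no regularity of the Schwartz kernel of $\Rie$ is available on a general doubling manifold, so every decay factor must be extracted from $L^2$ off-diagonal estimates. Remark \ref{rem0'}(b) allows us to take $m$ as large as desired, so this is not a serious obstruction.
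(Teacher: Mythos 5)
Your overall strategy matches the paper's: reduce via Theorem~\ref{thm0} and Remark~\ref{rem0'}(a) to the pair of off-diagonal estimates (\ref{cond1})--(\ref{cond2}) for $T=\Rie$, and obtain those from Davies--Gaffney bounds for $\sqrt{t}\,\na e^{-t\lb}$ together with the subordination formula for $\lb^{-1/2}$ and the iterated-integral representation of $(I-e^{-r_B^2\lb})^m$. This is precisely the content of Lemma~2.2 of \cite{HMa}, which the paper simply cites to produce (\ref{eq2}) and (\ref{eq3}); you are unpacking that lemma. One step in your sketch is stated too loosely, however: from the splitting $\na\lb^m e^{-u\lb}=(\na e^{-(u/2)\lb})(\lb^m e^{-(u/2)\lb})$ together with only the \emph{global} analyticity bound $\|(u\lb)^m e^{-u\lb}\|_{L^2\to L^2}\leq C$, you cannot reach the exponentially weighted estimate you write down, because $\lb^m e^{-(u/2)\lb}f$ is no longer supported in $E$, so applying the gradient off-diagonal bound to it produces no exponential gain. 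The standard repair is to use \emph{off-diagonal} $L^2$ estimates for $\lb^m e^{-(u/2)\lb}$ (which follow from Davies--Gaffney and analyticity on a complex sector), and then combine the two families of off-diagonal estimates via a composition lemma such as Lemma~\ref{lemD-Gestimateforproduct}. With that correction your argument coincides with the paper's, merely carried out rather than cited.
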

\begin{proof} By a similar argument to the proof of Lemma
2.2 in \cite{HMa}, it can be verified that for every $m\in \mathbb{N}$, all closed sets $E, F$ in $X$ with
$d(E,F)>0$ and every $f\in L^2(X)$ supported in $E$, one has
\begin{equation}\label{eq2}
||\na\lb^{-1/2}(I-\et)^mf||_{L^2(F)}\leq
C\Big(\f{t}{d(E,F)^2}\Big)^m||f||_{L^2(E)}, \ \forall t>0,
\end{equation}
and
\begin{equation}\label{eq3}
||\na\lb^{-1/2}(t\lb\et)^mf||_{L^2(F)}\leq
C\Big(\f{t}{d(E,F)^2}\Big)^m||f||_{L^2(E)}, \ \forall t>0.
\end{equation}
Obviously, (\ref{eq2}) and (\ref{eq3}) imply (\ref{cond1}) and (\ref{cond2}), respectively.
Hence our results follow from  Theorem \ref{thm0}. This completes our proof.
\end{proof}

Note that in (ii) of Theorem \ref{thm0} we
need  the Riesz transform $T=\Rie$ to be of weak type $(1,1)$ and
this condition on the Riesz transform can be obtained from
the assumptions of  doubling condition and the Gaussian bound
$\bf{(H3)}$. For reader's convenience, we recall the following
result in \cite{CD1}.\\
\begin{prop}[\cite{CD1}]\label{Duong'sthm}
Assume that $X$ satisfies the doubling property (\ref{boubling}) and the
kernels $p_t(x,y)$ of $e^{-t\lb}$ have Gaussian upper bounds $\bf{(H3)}$.
Then the Riesz transform $T=\Rie$ is bounded on $L^p(X), 1<p\leq 2$
and of weak type $(1,1)$.
\end{prop}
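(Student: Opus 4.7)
The plan proceeds in three steps: $L^2$ boundedness via spectral calculus, weak-type $(1,1)$ via a Calder\'on-Zygmund decomposition combined with a semigroup-based splitting in the spirit of Duong--McIntosh, and $L^p$ boundedness for $1<p<2$ by Marcinkiewicz interpolation between the two. The $L^2$ step is immediate: since $\lb$ is nonnegative self-adjoint, integration by parts gives
$$\|\na\lb^{-1/2}f\|_2^2=\langle\lb(\lb^{-1/2}f),\lb^{-1/2}f\rangle=\|f\|_2^2$$
for every $f$ in the orthogonal complement of $\mathcal N(\lb)$, so $T=\na\lb^{-1/2}$ is a contraction on $L^2(X)$.

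For the weak-$(1,1)$ step, fix $f\in L^1(X)\cap L^2(X)$ and $\alpha>0$, and apply the Coifman--Weiss Calder\'on--Zygmund decomposition at height $\alpha$ in the space of homogeneous type $(X,d,\mu)$: $f=g+\sum_i b_i$ with $\|g\|_\infty\le C\alpha$, $\supp b_i\subset B_i=B(x_i,r_i)$, $\|b_i\|_1\le C\alpha V(B_i)$, and $\sum_i V(B_i)\le C\alpha^{-1}\|f\|_1$. The good part is handled by the $L^2$ bound together with $\|g\|_2^2\le\|g\|_\infty\|g\|_1\le C\alpha\|f\|_1$ and Chebyshev. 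After discarding the enlarged exceptional set $\bigcup_i 4B_i$, whose measure is $\le C\alpha^{-1}\|f\|_1$ by doubling, the contribution of the bad part reduces to the annular estimate
$$\sum_{k\ge 2}\int_{S_k(B_i)}|Tb_i|\,d\mu\ \le\ C\|b_i\|_1\qquad\text{uniformly in }i.$$

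To prove this, fix an integer $m>n/4$ and use the splitting of Remark \ref{rem0'}, namely
$$T = T(I-e^{-r_i^2\lb})^m + T\bigl[I-(I-e^{-r_i^2\lb})^m\bigr],$$
whose second bracket is a linear combination of operators $e^{-kr_i^2\lb}$, $k=1,\dots,m$. The key analytic input is the pair of $L^2$ off-diagonal bounds exemplified by (\ref{eq2}) and (\ref{eq3}),
$$\|\na\lb^{-1/2}(I-e^{-t\lb})^m h\|_{L^2(F)} + \|\na\lb^{-1/2}(t\lb e^{-t\lb})^m h\|_{L^2(F)}\ \le\ C\bigl(t/d(E,F)^2\bigr)^m\|h\|_{L^2(E)}$$
for $h$ supported in $E$. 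Applied with $E=B_i$, $F=S_k(B_i)$ and $t=r_i^2$, these yield a factor $4^{-km}$ on the $L^2$ norms of each piece of $Tb_i$ on the annulus $S_k(B_i)$. Cauchy--Schwarz together with doubling, via $\|\cdot\|_{L^1(S_k(B_i))}\le V(2^kB_i)^{1/2}\|\cdot\|_{L^2(S_k(B_i))}$ and $\|b_i\|_{L^2(B_i)}\le V(B_i)^{-1/2}\|b_i\|_1$, converts the $L^2$ bounds into $L^1$ bounds, and the choice $m>n/4$ makes the resulting geometric series in $k$ summable with limit $\le C\|b_i\|_1$, giving the annular estimate.

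The hard part is the pair of $L^2$ off-diagonal bounds above: the hypothesis ${\bf(H3)}$ controls $|p_t(x,y)|$ pointwise but says nothing about $|\na_x p_t(x,y)|$, so one cannot simply differentiate the Gaussian bound. The standard workaround, which is the substantive content of \cite{CD1}, is an indirect $L^2$ argument: test $\na e^{-t\lb}h$ against smooth cutoffs adapted to $F$, integrate by parts to trade the gradient for $\lb^{1/2}$ acting on smoothed versions, and combine the Gaussian kernel bound on $e^{-t\lb}$ with the analyticity estimates $\|(t\lb)^je^{-t\lb}\|_{2\to 2}\le C_j$. Iterating along a geometric hierarchy of scales between $E$ and $F$ and then subordinating produces the polynomial off-diagonal decay $(t/d(E,F)^2)^m$ for both $\na\lb^{-1/2}(I-e^{-t\lb})^m$ and $\na\lb^{-1/2}(t\lb e^{-t\lb})^m$. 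Once these are in hand, the Calder\'on--Zygmund machinery sketched above delivers the weak-$(1,1)$ bound, and Marcinkiewicz interpolation with Step 1 completes the range $1<p\le 2$.
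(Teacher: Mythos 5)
The paper offers no proof of this proposition; it is quoted verbatim from \cite{CD1}, so your attempt has to be judged against the Coulhon--Duong argument itself. Your $L^2$ step and the overall skeleton (Calder\'on--Zygmund decomposition plus Marcinkiewicz interpolation) are correct, but the weak-$(1,1)$ step has a genuine gap. The inequality you invoke, $\|b_i\|_{L^2(B_i)}\le V(B_i)^{-1/2}\|b_i\|_1$, is Cauchy--Schwarz written backwards: the true inequality is $\|b_i\|_1\le V(B_i)^{1/2}\|b_i\|_{L^2(B_i)}$, and the bad functions of an $L^1$ function need not lie in $L^2$ at all. Consequently the $L^2\to L^2$ off-diagonal bounds (\ref{eq2})--(\ref{eq3}) cannot be applied to $b_i$. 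This is not a repairable detail but the crux of the matter: those two estimates hold under the Davies--Gaffney condition ${\bf(H2)}$ alone, so if they sufficed the Gaussian hypothesis ${\bf(H3)}$ in the proposition (and the extra weak-$(1,1)$ assumption in Theorem \ref{thm1}) would be redundant. What \cite{CD1} actually proves, and what your argument is missing, is the Gaussian-weighted $L^2$ gradient estimate
\begin{equation*}
\Big(\int_X|\nabla_xp_t(x,y)|^2e^{\alpha d(x,y)^2/t}\,d\mu(x)\Big)^{1/2}\le \frac{C}{\sqrt{t}\,V(y,\sqrt{t})^{1/2}},
\end{equation*}
obtained indirectly from ${\bf(H3)}$ by integration by parts. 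Via Minkowski's inequality this yields $L^1(B_i)\to L^2(S_k(B_i))$ off-diagonal decay for $\nabla e^{-s\Delta}$, which is the estimate that survives pairing with an $L^1$-normalized bad function; this is precisely the structure the paper itself reproduces in Section 5 (Lemma \ref{lem3} and Proposition \ref{prop2.1}) for the magnetic Schr\"odinger operator.

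A second, related defect: the term $T\bigl[I-(I-e^{-r_i^2\Delta})^m\bigr]b_i=\sum_k c_kTe^{-kr_i^2\Delta}b_i$ cannot be controlled by (\ref{eq3}), since no factor $(t\Delta)^m$ is present; the manipulation in Remark \ref{rem0'}(a) that produces such a factor uses the atomic structure $a=L^mb$, which CZ bad functions do not have. The standard treatment of this piece is different in kind: one uses the pointwise bound $p_t(x,y)\le C/V(y,\sqrt{t})$ from ${\bf(H3)}$ to show $\|e^{-kr_i^2\Delta}b_i\|_\infty\le C\alpha$, so that $\sum_ie^{-kr_i^2\Delta}b_i$ can be absorbed into the good part and handled by the $L^2$ bound. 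In short, the Gaussian upper bound must enter the proof at two places (the weighted gradient estimate and the smoothing of the bad part), and in your write-up it enters at neither.
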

 From Theorems \ref{thm1} and Proposition \ref{Duong'sthm} we obtain
the following result.
\begin{cor}\label{cor2}
Assume that  $X$ satisfies the doubling property (\ref{boubling}), the kernels
$p_t(x,y)$  of $e^{-t\lb}$ have Gaussian upper bounds $\bf{(H3)}$ and
$b\in BMO(X)$. Then the commutator $[b, T]$ maps $H^1_\lb(X)$
continuously into $L^{1,\vc}(X)$.
\end{cor}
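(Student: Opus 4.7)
The plan is simply to combine the two results immediately preceding the corollary. Proposition \ref{Duong'sthm} (quoted from \cite{CD1}) shows that under the doubling hypothesis together with the Gaussian upper bound $\bf{(H3)}$, the Riesz transform $T = \nabla \Delta^{-1/2}$ is of weak type $(1,1)$. This is precisely the hypothesis required in the second half of Theorem \ref{thm1} to conclude that $[b,T]$ maps $H^1_\Delta(X)$ continuously into $L^{1,\infty}(X)$.

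So concretely, I would first verify that the standing assumptions of Theorem \ref{thm1} are met: namely, $X$ satisfies the doubling property, and $b \in BMO(X)$, both of which are given. This already yields the $H^p_\Delta(X) \to L^p(X)$ boundedness of $T$ for $0 < p \le 1$, but that is not what is needed here. Next I would invoke Proposition \ref{Duong'sthm} to extract weak type $(1,1)$ boundedness of $T$; observe that this step is where the Gaussian upper bound $\bf{(H3)}$ is used (note that $\bf{(H3)}$ is strictly stronger than the Davies--Gaffney hypothesis $\bf{(H2)}$ underlying Theorem \ref{thm1}, and that $\bf{(H3)}$ is in fact unavoidable: without a pointwise heat kernel bound there is no a priori reason for the Riesz transform to be of weak type $(1,1)$).

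With the weak $(1,1)$ bound in hand, the second conclusion of Theorem \ref{thm1} applies verbatim and delivers the desired continuity of $[b,T] \colon H^1_\Delta(X) \to L^{1,\infty}(X)$. There is no real obstacle here beyond bookkeeping: all technical work — namely verifying the off-diagonal estimates \eqref{eq2} and \eqref{eq3} needed to invoke Theorem \ref{thm0}, and thereby Theorem \ref{thm1} — was carried out in the proof of Theorem \ref{thm1}, so the corollary is a one-line assembly of previously proved results.
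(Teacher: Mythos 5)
Your proposal is correct and is exactly the paper's argument: Corollary \ref{cor2} is obtained by combining Proposition \ref{Duong'sthm} (weak type $(1,1)$ of $T=\Rie$ under doubling and $\bf{(H3)}$) with the second assertion of Theorem \ref{thm1}. The only quibble is your parenthetical claim that $\bf{(H3)}$ is ``unavoidable''; the paper does not assert this, and it is not needed for the corollary.
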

In \cite{HLMMY}, it was shown that  under condition
$\bf{(H3)}$, $H^1_{\lb,1}(X)=H^1_\lb(X)$ and $H^p_{\lb,m}(X)=L^p(X)$
for all $p>1$ and $m\geq 1$, see also \cite{AMR}. It implies that if $T$
is a bounded operator on $L^p(X), p>1$ and $T$ maps $H^1_\lb(X)$
continuously into $L^{1,\vc}(X)$ then by interpolation (Theorem 9.7
in \cite{HLMMY}), $T$ maps $H^q_{\lb,m}(X)$ into $L^q(X)$ whenever
$1<q<p$, and hence $T$ extends to a bounded operator on $L^q(X)$ for
all $1<q<p$.\\

\begin{defn} We say that $X$ satisfies an $L^2$ Poincar\'e inequality on balls if there exists $C>0$
such that for any ball $B\subset X$ and any function $f\in C^\infty(2B)$,
\begin{equation}\label{PoincareInequality}
\int_B|f(x)-f_B|^2 d\mu\leq Cr_B^2\int_{2B}|\nabla f(x)|^2d\mu,
\end{equation}
where $f_B$ denotes the mean-value of $f$ on the ball $B$ and $r_B$ the radius of $B$.
\end{defn}

Under the conditions of doubling property and $L^2$ Poincar\'e inequality, it is known that the Hardy space $H^1_\lb(X)$ coincides
with the  standard Hardy space. Indeed, we have the following result.

\begin{prop}[\cite{AMR}]
Assume that $X$ satisfies the doubling property (\ref{boubling}) and Poincar\'e inequality (\ref{PoincareInequality}) then the Hardy space $H^1_\lb(X)$ and the Coifman-Weiss Hardy space $H^1_{CW}(X)$ coincide.
\end{prop}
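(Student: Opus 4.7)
The plan is to prove $H^1_\Delta(X) = H^1_{CW}(X)$ by establishing the two inclusions separately. The forward inclusion uses only the assumptions $\bf{(H1)}$--$\bf{(H2)}$, while the reverse inclusion relies essentially on the $L^2$ Poincar\'e inequality, which supplies two-sided Gaussian heat-kernel bounds and a gradient-of-kernel estimate that are the key analytic tools.

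For $H^1_\Delta(X) \hookrightarrow H^1_{CW}(X)$, I would verify that each $(1,2,m)$-atom $a = \Delta^m b$ supported in $B$ is a fixed scalar multiple of a classical Coifman--Weiss $(1,2)$-atom. The support and $L^2$-size conditions are built into Definition \ref{def2.3} with the correct normalization, so the only nontrivial point is the cancellation $\int_X a\, d\mu = 0$. This follows from $\int_X \Delta(\Delta^{m-1}b)\, d\mu = 0$ by Stokes' theorem applied to the compactly supported function $\Delta^{m-1}b$ on the complete manifold $X$. Any $H^1_\Delta$-atomic decomposition of $f$ then becomes a CW-atomic decomposition of $f$ with the same $\ell^1$ coefficient norm, yielding $\|f\|_{H^1_{CW}} \lesssim \|f\|_{H^1_\Delta}$.

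For the converse direction $H^1_{CW}(X) \hookrightarrow H^1_\Delta(X)$, my plan is to show that every Coifman--Weiss $(1,2)$-atom $a$ supported in $B(x_0,r)$ is itself a $(1,2,m,\epsilon)$-molecule associated to $\Delta$ with a universal molecular constant; Proposition \ref{mol-pro} then provides $\|a\|_{H^1_\Delta} \le C$ uniformly. Setting $b := \Delta^{-m}a$ through the functional calculus, the molecular annular conditions reduce to $L^2$ off-diagonal estimates for $(r^2\Delta)^{k-m}a$ on $S_j(B)$ for $k = 0,\ldots,m$. I would obtain these by representing $\Delta^{-(m-k)}$ as a weighted time integral of the heat semigroup $e^{-s\Delta}$ and splitting the $s$-integral into the regions $s \le r^2$ and $s > r^2$. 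The small-time regime is controlled by the Gaussian upper bound, while the large-time regime uses the cancellation $\int a\, d\mu = 0$ together with the gradient kernel estimate $|\nabla_x p_s(x,y)| \le C s^{-1/2} V(x,\sqrt{s})^{-1}\exp(-d(x,y)^2/Cs)$, which under doubling plus $L^2$ Poincar\'e follows from the theorems of Grigor'yan and Saloff-Coste; pairing $a$ against $p_s(\cdot,y) - p_s(\cdot,x_0)$ produces the extra factor $r/\sqrt{s}$ required for convergence at large $s$.

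The main obstacle will be producing off-diagonal decay of the form $V(2^jB)^{-1/2}2^{-j\epsilon}$ in the annular index $j$ uniformly in $k$. The delicate point is the large-time regime, where only the cancellation of $a$ rescues the estimate and where the gradient heat-kernel bound is indispensable; this is precisely why the Poincar\'e inequality cannot be omitted, since without it both the Gaussian lower bound and the gradient upper bound fail in general, and CW-atoms need not lie in $H^1_\Delta$. Once the molecular conditions are verified for some $\epsilon > 0$, Proposition \ref{mol-pro} completes the reverse inclusion and the proposition follows.
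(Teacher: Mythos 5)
This proposition is stated in the paper with a citation to \cite{AMR} and no proof is given, so there is no in-paper argument to compare against; what follows assesses your proposal on its own terms.

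Your forward inclusion $H^1_\lb(X)\hookrightarrow H^1_{CW}(X)$ is sound: the support and $L^2$-normalization of a $(1,2,m)$-atom $a=\lb^m b$ match the Coifman--Weiss conditions, and the cancellation $\int_X a\,d\mu=0$ follows on a complete manifold from $\int_X \lb u\,d\mu=0$ for compactly supported $u$ in the domain. Your reverse inclusion has the correct overall shape (Coifman--Weiss atom $\Rightarrow$ operator molecule $\Rightarrow$ Proposition \ref{mol-pro}), and you correctly identify that the large-time regime of the representation $b=\int_0^\infty s^{m-1}e^{-s\lb}a\,ds$ is where cancellation must be used. However, there is a genuine gap: the pointwise gradient heat-kernel estimate $|\nabla_x p_s(x,y)|\leq Cs^{-1/2}V(x,\sqrt s)^{-1}\exp(-d(x,y)^2/Cs)$ is \emph{not} a consequence of doubling plus the $L^2$ Poincar\'e inequality. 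Those two hypotheses are equivalent (Grigor'yan, Saloff-Coste) to the parabolic Harnack inequality and hence give two-sided Gaussian bounds and H\"older continuity of $p_t$, but they do not give Lipschitz control of $p_t$; the pointwise gradient bound is a strictly stronger regularity property (essentially tied to $L^p$ Riesz-transform boundedness for large $p$), and there are doubling--Poincar\'e manifolds where it fails. To repair this, one should replace the pointwise gradient bound by either (a) the $L^2$ Davies--Gaffney estimate for $\sqrt t\,\nabla e^{-t\lb}$, valid on any complete manifold (this is exactly (\ref{D-Gestimateonmanifold}) used elsewhere in this paper), combined with the $L^\infty$-to-$L^2$ decay from the Gaussian upper bound; or (b) the conservation property $e^{-t\lb}1=1$ together with the Gaussian bound, which is how the molecular estimate for Coifman--Weiss atoms is typically carried out and is closer to what \cite{AMR} actually does. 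A secondary point worth making explicit is that $b=\lb^{-m}a$ is not defined a priori, since $0$ lies in the spectrum of $\lb$ on a noncompact manifold; the convergence of $\int_0^\infty s^{m-1}e^{-s\lb}a\,ds$ in $L^2$ (which rests precisely on the cancellation and the large-time decay) is what legitimizes writing $a=\lb^m b$, and this should be stated as part of the argument rather than taken for granted.
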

This Proposition and Corollary \ref{cor2} give the following Corollary.
\begin{cor}\label{thm3}
Assume that $X$ satisfies the doubling property (\ref{boubling}) and Poincar\'e inequality (\ref{PoincareInequality}) and let
$b\in BMO(X)$. Then the commutator $[b, T]$ maps $H^1_{CW}(X)$
continuously into $L^{1,\vc}(X)$.
\end{cor}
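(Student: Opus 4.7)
The plan is to chain together the two immediately preceding results, with one classical heat kernel fact filling the gap between ``Poincar\'e + doubling'' and the Gaussian bound hypothesis of Corollary \ref{cor2}. First, I would observe that under the doubling condition (\ref{boubling}) and the $L^2$ Poincar\'e inequality (\ref{PoincareInequality}), the heat kernel $p_t(x,y)$ of $e^{-t\lb}$ automatically satisfies the Gaussian upper bound $\bf{(H3)}$; this is the well-known theorem of Saloff-Coste (and Grigor'yan). Thus the hypotheses of Corollary \ref{cor2} are in force in the present setting.

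Second, Corollary \ref{cor2} then delivers that $[b,T]$ maps $H^1_\lb(X)$ continuously into $L^{1,\vc}(X)$, with
$$
\|[b,T]f\|_{L^{1,\vc}(X)}\leq C\,\|b\|_{BMO}\,\|f\|_{H^1_\lb(X)}.
$$
At the same time, the preceding Proposition from \cite{AMR} asserts that, under exactly the same hypotheses of doubling plus Poincar\'e, the operator-defined Hardy space $H^1_\lb(X)$ coincides with the Coifman--Weiss Hardy space $H^1_{CW}(X)$, with equivalent norms. Replacing $\|f\|_{H^1_\lb(X)}$ by the equivalent $\|f\|_{H^1_{CW}(X)}$ in the displayed estimate gives the required conclusion.

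There is essentially no analytic obstacle: all the hard work (the off-diagonal decay estimates (\ref{eq2})--(\ref{eq3}) for the Riesz transform, its weak $(1,1)$ bound via Proposition \ref{Duong'sthm}, and the general commutator scheme of Theorem \ref{thm0}) has already been carried out upstream. The only external input needed is the Saloff-Coste Gaussian heat kernel estimate, which is standard and off-the-shelf. So Corollary \ref{thm3} follows by assembly of Corollary \ref{cor2} with the identification $H^1_\lb(X)=H^1_{CW}(X)$.
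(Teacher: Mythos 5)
Your proposal is correct and follows essentially the same route as the paper, which simply combines Corollary \ref{cor2} with the identification $H^1_\lb(X)=H^1_{CW}(X)$ from the quoted \cite{AMR} proposition. In fact you are slightly more careful than the paper: you make explicit the intermediate step (doubling plus the $L^2$ Poincar\'e inequality implies the Gaussian upper bound $\bf{(H3)}$, by Saloff-Coste/Grigor'yan) that is needed for Corollary \ref{cor2} to apply but which the paper leaves unstated.
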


\smallskip

 We now show that similar results hold when we replace the Riesz transforms
of the Laplace-Beltrami operator by square functions of the Laplace-Beltrami operator.
 Consider the following four versions of the square functions
$$
\mathcal{G}f(x):=\Big(\int_{0}^\infty t|\na e^{-t\sqrt{\lb}}f(x)|^2dt\Big)^{1/2},\ \
\mathcal{H}f(x):=\Big(\int_{0}^\infty |\na e^{-t\lb}f(x)|^2dt\Big)^{1/2},
$$
$$
gf(x):=\Big(\int_{0}^\infty t|\sqrt{\lb} e^{-t\sqrt{\lb}}f(x)|^2dt\Big)^{1/2},
\ \
hf(x):=\Big(\int_{0}^\infty t |\lb e^{-t\lb}f(x)|^2dt\Big)^{1/2}.
$$
By similar arguments used in Lemma 2.2 of \cite{HMa}, it can be verified that $\mathcal{G}, \mathcal{H}, g$ and $h$ satisfy (\ref{eq2}) and (\ref{eq3}) and hence they satisfy (\ref{cond1}) and (\ref{cond2}). For reader's convenience, we sketch the proof for $\mathcal{H}$ only. The remainders are treated similarly.

\smallskip
The first ingredient is that the Davies-Gaffney estimate (\ref{D-Gestimate}) is valid in a general complete, connected Riemannian manifold
(see for example \cite{ACDH}): There exist two constants $C\geq 0$ and $c>0$ such that, for every $t\geq 0$,
every closed subsets $E$ and $F$ of $X$, and every function $f$ supported in $E$, one has
\begin{equation}\label{D-Gestimateonmanifold}
\begin{aligned}
||e^{-t\lb}f||_{L^2(F)}+||t\lb e^{-t\lb}f||_{L^2(F)}&+||\sqrt{t}|\na e^{-t\lb}f|||_{L^2(F)}\\
&\leq C\exp\Big\{-\f{d^2(E,F)}{t}\Big\}||f||_{L^2(E)}.
\end{aligned}
\end{equation}
Secondly, we recall the following result in \cite{HMa}.
\begin{lem}\label{lemD-Gestimateforproduct} Assume that the two families of operators $\{S_t\}_{t> 0}$ and $\{T_t\}_{t>
0}$ satisfy the Davies-Gaffney estimate (\ref{D-Gestimate}). Then there
exist two constants $C\geq 0$ and $c>0$ such that, for every $t>0$,
every closed subsets $E$ and $F$ of $X$, and every function $f$
supported in $E$, one has
$$
||S_sT_t f||_{L^2(F)}\leq
C\exp\Big\{-\frac{d(E,F)^2}{c\max\{s,t\}}\Big\}||f||_{L^2(E)}.
$$
\end{lem}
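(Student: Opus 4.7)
The plan is to prove the composite Davies-Gaffney estimate by a standard cut-and-paste argument: insert an intermediate set midway between $E$ and $F$, and exploit the decay of whichever of the two operators encounters a large distance gap on each piece. Set $r := d(E,F)/2$ and
$$ G := \{ x \in X : d(x, E) \leq r \}. $$
By the triangle inequality one has $d(G^c, E) \geq r$ and $d(G, F) \geq r$, so the pair $(G, G^c)$ separates $E$ from $F$ by distance at least $r$ on either side.

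First I would split $T_t f = \chi_G (T_t f) + \chi_{G^c}(T_t f)$ and apply the triangle inequality in $L^2(F)$ to get
$$ \| S_s T_t f \|_{L^2(F)} \leq \| S_s(\chi_G T_t f) \|_{L^2(F)} + \| S_s(\chi_{G^c} T_t f) \|_{L^2(F)}. $$
For the first summand, $\chi_G T_t f$ is supported in $G$ at distance $\geq r$ from $F$, so the Davies-Gaffney hypothesis for $\{S_s\}$ gives decay $e^{-r^2/(cs)}$ times $\|T_t f\|_{L^2(X)}$, and $L^2$-boundedness of $T_t$ then leaves $\|f\|_{L^2(E)}$. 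For the second summand, I would instead use $L^2$-boundedness of $S_s$ to throw away the restriction to $F$, reducing to $\|T_t f\|_{L^2(G^c)}$; since $d(E, G^c) \geq r$, the Davies-Gaffney hypothesis for $\{T_t\}$ on the pair $(E, G^c)$ produces a factor $e^{-r^2/(ct)} \|f\|_{L^2(E)}$.

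Adding the two estimates and using the elementary bound $e^{-r^2/(cs)} + e^{-r^2/(ct)} \leq 2 e^{-r^2/(c\max(s,t))}$, together with $r^2 = d(E,F)^2/4$, yields the claimed inequality after absorbing the numerical factor $1/4$ into the constant. There is essentially no real obstacle; the only point requiring care is extracting from the bilinear statement (\ref{D-Gestimate}) both the $L^2$-boundedness of each individual $S_s$, $T_t$ (obtained by taking $U_1 = U_2 = X$) and the equivalent operator-norm form $\|S_s g\|_{L^2(F)} \leq C e^{-d(E,F)^2/(cs)} \|g\|_{L^2(E)}$, which follows by dualizing against arbitrary $L^2(F)$ test functions. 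Once this equivalence is in hand, the decomposition above runs without further trouble.
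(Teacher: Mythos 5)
Your proof is correct and follows the same standard intermediate-set splitting argument as the cited source \cite{HMa}, from which the paper simply quotes this lemma without reproving it. The two points you flag as needing care — passing from the bilinear form of (\ref{D-Gestimate}) to the operator-norm form by duality, and absorbing the factor coming from $r^2=d(E,F)^2/4$ into the constant $c$ — are both handled correctly, so there is nothing to fix.
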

We now  show that $\mathcal{H}$ satisfies (\ref{eq2}) and (\ref{eq3}). Let us prove condition (\ref{eq2}) first. We have
\begin{equation*}
\begin{aligned}
||\mathcal{H}&(I-e^{-t\lb})^mf||_{L^2(F)}:=\Big\|\Big(\int_{0}^\infty |\na e^{-s\lb}(I-e^{-t\lb})^mf|^2ds\Big)^{1/2}\Big\|_{L^2(F)}\\
&\leq C\Big\|\Big(\int_{0}^\infty |\sqrt{s}\na e^{-s(m+1)\lb}(I-e^{-t\lb})^mf|^2\f{ds}{s}\Big)^{1/2}\Big\|_{L^2(F)}\\
&\leq C\Big(\int_{0}^t \Big\|\sqrt{s}\na e^{-s(m+1)\lb}(I-e^{-t\lb})^mf\Big\|_{L^2(F)}^2\f{ds}{s}\Big)^{1/2}\\
&~~~~+ C\Big(\int_{t}^\infty \Big\|\sqrt{s}\na e^{-s(m+1)\lb}(I-e^{-t\lb})^mf\Big\|_{L^2(F)}^2\f{ds}{s}\Big)^{1/2}=I_1+I_2.\\
\end{aligned}
\end{equation*}
To estimate the term $I_1$, we note that
$$
(I-e^{-t\lb})^m=I+\sum_{k=1}^mc_k e^{-tk\lb}.
$$
Hence,
\begin{equation*}
\begin{aligned}
I_1&\leq C\Big(\int_{0}^t \Big\|\sqrt{s}\na e^{-s(m+1)\lb}f\Big\|_{L^2(F)}^2\f{ds}{s}\Big)^{1/2}\\
&~~~~~+C\sup\limits_{1\leq k\leq m}\Big(\int_{0}^t \Big\|\sqrt{s}\na e^{-s(m+1)\lb}e^{-tk\lb}f\Big\|_{L^2(F)}^2\f{ds}{s}\Big)^{1/2}\\
&\leq C\Big(\int_{0}^t \Big\|\sqrt{s}\na e^{-s(m+1)\lb}f\Big\|_{L^2(F)}^2\f{ds}{s}\Big)^{1/2}\\
&~~~~~+C\sup\limits_{1\leq k\leq m}\Big(\int_{0}^t \Big\|\na e^{-s(m+1)\lb}\sqrt{tk}e^{-tk\lb}f\Big\|_{L^2(F)}^2\f{ds}{t}\Big)^{1/2}.
\end{aligned}
\end{equation*}
Using Lemma \ref{lemD-Gestimateforproduct} and (\ref{D-Gestimateonmanifold}), one has
\begin{equation*}
\begin{aligned}
I_1&\leq C\Big(\int_{0}^t \exp\Big\{-\f{d(E,F)^2}{cs}\Big\}\f{ds}{s}\Big)^{1/2}||f||_{L^2(E)}\\
&~~~ +C \Big( \exp\Big\{-\f{d(E,F)^2}{ct}\Big\}\int_{0}^t\f{ds}{t}\Big)^{1/2}||f||_{L^2(E)}.\\
\end{aligned}
\end{equation*}
It is not difficult to see that the expression above is bounded by $C\Big(\f{t}{d(E,F)}\Big)^m||f||_{L^2(E)}$ as desired.\\
We now estimate the second term $I_2$. We have
\begin{equation}\label{eq5.3}
\begin{aligned}
I_2\leq C\Big(\int_{t}^\infty \Big\|\sqrt{s}\na e^{-s\lb}(e^{-s\lb}-e^{-(s+t)\lb})^mf\Big\|_{L^2(F)}^2\f{ds}{s}\Big)^{1/2}.
\end{aligned}
\end{equation}
It was observed in \cite{HMa} that
\begin{equation*}
\begin{aligned}
\Big\|\f{s}{t}(e^{-s\lb}-e^{-(s+t)\lb})g\Big\|_{L^2(F)}^2\leq C\exp\Big\{-\f{d(E,F)^2}{cs}\Big\}||g||_{L^2(E)}.
\end{aligned}
\end{equation*}
Multiplying and dividing $(\ref{eq5.3})$ by $\Big(\f{s}{t}\Big)^{2m}$ and using Lemma \ref{lemD-Gestimateforproduct} for $\sqrt{s}\na e^{-s\lb}$ and $m$ copies of $\f{s}{t}(e^{-s\lb}-e^{-(s+t)\lb})$, we get that
$$
I_2\leq C \Big(\int_{t}^\infty \exp\Big\{-\f{d(E,F)^2}{cs}\Big\}\Big(\f{t}{s}\Big)^{2m}\f{ds}{s}\Big)^{1/2}||f||_{L^2(E)}.
$$
Next, making a change of variables $r:=\f{d(E,F)^2}{cs}$, we can control the RHS of the above expression by $C\Big(\f{t}{d(E,F)}\Big)^m||f||_{L^2(E)}$ as desired.\\
This finishes the proof of $(\ref{eq2})$ for $\mathcal{H}$. The proof for (\ref{eq3}) can be done essentially the same way, hence it is omitted here.
This completes our proof.\\

Let us recall that under the Gaussian condition $\bf{(H3)}$, $\mathcal{G}, \mathcal{H}, g$ and $h$ are of weak type $(1,1)$, see \cite{CD2}.
Hence the following result follows from Theorem \ref{thm0}.
\begin{thm}
\begin{enumerate}[(i)]
\item Assume that $X$ satisfies the doubling property (\ref{boubling}) and $b$ is a function in
BMO$(X)$. Then $\mathcal{G}, \mathcal{H}, g$ and $h$ are bounded from $H^p_\lb(X)$ to $L^p(X)$ for any $0<p\leq 1$.
\item If the Gaussian condition $\bf{(H3)}$ is satisfied, then the commutators of a BMO function $b$ and
each of $\mathcal{G}, \mathcal{H}, g$ and $h$ are bounded from $H_\lb^1(X)$ to $L^{1,\infty}(X)$.
\end{enumerate}
\end{thm}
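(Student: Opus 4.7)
The plan is to apply Theorem \ref{thm0} with $T\in\{\mathcal{G},\mathcal{H},g,h\}$: by Remark \ref{rem0'}(a), for part (i) it suffices to verify (a) $L^2(X)$-boundedness of $T$ and (b) the two off-diagonal bounds (\ref{cond1}) and (\ref{cond2}), while part (ii) additionally requires the weak type $(1,1)$ bound for $T$. The $L^2$-boundedness of all four square functions is classical: for $g$ and $h$ it is immediate from the spectral theorem applied to the non-negative self-adjoint operator $\lb$, and for $\mathcal{G}$ and $\mathcal{H}$ one uses in addition the identity $\|\nabla u\|_{L^2}^2=\langle \lb u,u\rangle$ together with the subordination formula.

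The excerpt has already performed the detailed verification of (\ref{eq2}) and (\ref{eq3}) (which imply (\ref{cond1}) and (\ref{cond2})) for $\mathcal{H}$, relying on only two ingredients: the Davies-Gaffney-type estimate (\ref{D-Gestimateonmanifold}) valid on any complete Riemannian manifold, and Lemma \ref{lemD-Gestimateforproduct} on the preservation of Davies-Gaffney under composition. To treat $h$ I would run the same scheme verbatim, simply replacing $\sqrt{s}\,|\nabla e^{-s\lb}|$ by $s\,|\lb e^{-s\lb}|$ throughout; the needed off-diagonal bound $\|s\lb e^{-s\lb}f\|_{L^2(F)}\leq C\exp(-d(E,F)^2/(cs))\|f\|_{L^2(E)}$ is already part of (\ref{D-Gestimateonmanifold}). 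For the Poisson-based functions $\mathcal{G}$ and $g$ I would use the subordination formula
\begin{equation*}
e^{-t\sqrt{\lb}}\;=\;\frac{t}{2\sqrt{\pi}}\int_0^\infty s^{-3/2}e^{-t^2/(4s)}e^{-s\lb}\,ds,
\end{equation*}
to write $\sqrt{t}\,\nabla e^{-t\sqrt{\lb}}$ and $\sqrt{t}\,\sqrt{\lb}e^{-t\sqrt{\lb}}$ as Bochner integrals of the corresponding heat-semigroup operators, and then transfer the Davies-Gaffney bounds already established for $\mathcal{H}$ and $h$ through the subordinating measure.

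The main obstacle I anticipate is the $s$-bookkeeping produced by subordination: after splitting the $s$-integral at the natural scale $s\sim d(E,F)^2$ (in the spirit of the excerpt's splitting of its $t$-integral into the regions $s<t$ and $s>t$), one must check that integrating the Gaussian factors $\exp(-d(E,F)^2/(cs))$ against the subordinating density $s^{-3/2}e^{-t^2/(4s)}$ still yields the decay $(t/d(E,F)^2)^m$ demanded by (\ref{cond1}) and (\ref{cond2}). This is technically routine but requires some care to avoid logarithmic losses when the two integration scales coincide.

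Finally, part (ii) is then a direct corollary: under assumption $\bf{(H3)}$, the weak type $(1,1)$ bounds for each of $\mathcal{G},\mathcal{H},g,h$ are recorded in \cite{CD2}, so Theorem \ref{thm0}(ii) applied to each of the four square functions yields the desired boundedness of its commutator with any $b\in\mbox{BMO}(X)$ from $H^1_\lb(X)$ into $L^{1,\infty}(X)$.
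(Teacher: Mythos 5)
Your proposal is correct and follows essentially the same route as the paper: reduce to Theorem \ref{thm0} by verifying (\ref{eq2})--(\ref{eq3}) (hence (\ref{cond1})--(\ref{cond2})) for each square function via the Davies--Gaffney estimate (\ref{D-Gestimateonmanifold}) and Lemma \ref{lemD-Gestimateforproduct}, and then quote \cite{CD2} for the weak type $(1,1)$ bounds needed in part (ii). The only difference is that you flesh out, via the subordination formula, the cases $\mathcal{G}$, $g$ and $h$ that the paper dismisses with ``the remainders are treated similarly,'' which is exactly the standard way to carry out that omitted step.
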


\section{Commutators of BMO functions and Riesz transforms associated with  magnetic
Schr\"odinger operators}
The approach in Section 4.1 can be used to obtain the boundedness of Riesz transforms of Schr\"odinger operators and their commutators but  is not applicable in the case of magnetic Schr\"odinger operators. Indeed, a
different approach is needed for magnetic Schr\"odinger operators. \\

Consider magnetic Schr\"odinger operators in
general setting as in \cite{DOY}. Let the real vector potential
${\vec a}=(a_1, \cdots, a_n)$ satisfy
\begin{eqnarray}
a_k\in L^2_{\rm loc}({\mathbb R}^n), \ \ \ \ \forall k=1, \cdots, n,
\ \ \label{e1.1}
\end{eqnarray}
and an electric potential $V$ with
\begin{eqnarray}
0\leq V\in L^1_{\rm loc}({\mathbb R}^n). \label{e1.2}
\end{eqnarray}

\noindent Let $L_k={\partial/\partial x_k}-i a_k $. We   define the
form $Q$ by
\begin{eqnarray*}
Q(f,g)=\sum_{k=1}^n\int_{{\mathbb R}^n}L_kf
   {\overline {L_kg}} \  \! dx + \int_{{\mathbb R}^n}V f
   {\overline {g}} \ \! dx
  \end{eqnarray*}
  \noindent
with domain $\mathcal{D}(Q)=\mathcal{Q}\times \mathcal{Q} $ where
$$ \mathcal{Q}=\{f\in L^2({\mathbb R}^n), L_kf\in L^2({\mathbb R}^n) {\rm \  for}\
k=1,\cdots, n {\rm  \ and} \ \sqrt{V} f\in L^2({\mathbb R}^n)\}.
$$

 \noindent
 It is well known that this symmetric form is closed and this form coincides with the minimal closure
of the form given by the same expression but defined on
$C^{\infty}_0({\mathbb R}^n)$ (the space of $C^{\infty}$ functions
with compact supports). See, for example \cite{Si}.

Let us denote by $A$ the self-adjoint operator associated with $Q$.
The domain of $A$ is given by
$$
{\mathcal D}(A)=\Big\{f\in {\mathcal D}(Q), \exists g\in L^2({\Bbb
R}^n) {\rm \ such \ that\ } Q(f,\varphi)=\int_{{\mathbb R}^n} g{\bar
\varphi} dx, \ \ \forall\varphi\in {\mathcal D}(Q)\Big\},
$$
and $A$ is given by the expression
\begin{eqnarray}
Af=\sum_{k=1}^nL_k^{\ast} L_kf+Vf. \label{e1.3}
\end{eqnarray}

\noindent Formally, we write  $A=-(\nabla-i{\vec a})\cdot
(\nabla-i{\vec a})+V$. For  $k=1, \cdots, n$, the operators
$L_kA^{-1/2}$ are called the Riesz transforms associated with $A.$
It is easy to check that
\begin{eqnarray}
 \|L_kf \|_{L^2({\mathbb R}^n)}\leq  \|A^{1/2} f \|_{L^2({\mathbb R}^n)},
 \ \ \ \ \ \  \  \forall f\in {\mathcal
D}(Q)={\mathcal D}(A^{1/2}) \label{e1.4}
\end{eqnarray}

\noindent for any  $k=1, \cdots, n$, and hence the operators
$L_kA^{-1/2}$ are bounded on $L^2({\mathbb R}^n)$. Note that this is
also true for $V^{1/2}A^{-1/2}$. Moreover, it was recently proved in
Theorem 1.1 of  \cite{DOY}   that for each $k=1, \cdots, n$,  the
Riesz transforms $L_k A^{-1/2}$ and $V^{1/2} A^{-1/2}$ are   bounded
on $L^p({\mathbb R}^n)$ for all $1<p\leq 2$, i.e., there exists a
constant $C_p>0$ such that
\begin{eqnarray}
\label{ev}\hspace{1cm}
    \big\|V^{1/2} A^{-1/2}f\big\|_{L^p({\mathbb R}^n)}+
    \sum_{k=1}^n  \big\|L_k A^{-1/2}f\big\|_{L^p({\mathbb R}^n)} \leq C_p\|f\|_{L^p({\mathbb R}^n)}, \ \ \
\end{eqnarray}
for $1<p\leq 2$.

\noindent
The $L^p$-boundedness of Riesz transforms for the range $p>2$ can be obtained if one imposes certain additional regularity conditions on the potential $V$, see for example \cite{AB}.

\begin{rem}
\begin{enumerate}[(i)]

\item In \cite{DOY}, the boundedness of the Riesz transforms
$L_kA^{-1/2}$ and $V^{1/2}A^{-1/2}$ was proved for
$L^{p}(\mathbb{R}^n)$ spaces with $1<p<2$;
\item In \cite{DY1}, $L^p$
boundedness of commutators of a BMO function and the Riesz
transforms $L_kA^{-1/2}$ and $V^{1/2}A^{-1/2}$ was proved for the
range $1 < p < 2$;
\item Recently, \cite{A} extended the results in \cite{DOY} and \cite{DY1} to
weighted weak type $L^{1,\infty}$ estimates and weighted $L^p$
estimates with an appropriate range of $p$ (depending on the weight).
\end{enumerate}

\end{rem}

It is a natural open question to consider the endpoint estimates for the commutators of a BMO function and the operators
$L_kA^{-1/2}$ and $V^{1/2}A^{-1/2}$, and the boundedness of the Riesz transforms $L_kA^{-1/2}$ and $V^{1/2}A^{-1/2}$
on the range $0<p\leq 1$. Our aim in this section is to establish the end point estimates for the commutators of the
Riesz transforms $L_kA^{-1/2}$ and $V^{1/2}A^{-1/2}$ and a BMO function $b$ when $p=1$ and the
estimates for Riesz transforms $L_kA^{-1/2}$ and $V^{1/2}A^{-1/2}$ for $0<p\leq 1$. Our main result
of this section  is the following theorem.
\begin{thm}\label{thm4}

(i) The Riesz transforms $\LA$ and $\VA$ are bounded from $H^p_A(\mathbb{R}^n)$ to $L^p(\mathbb{R}^n)$ for all $0<p\leq 1$.

(ii) Let $b\in BMO(\mathbb{R}^n)$. Then the
commutators $\Big[b,\VA\Big]$ and $\Big[b,\LA\Big]$ map $H^1_A(\mathbb{R}^n)$
continuously into $L^{1,\infty}(\mathbb{R}^n)$.
\end{thm}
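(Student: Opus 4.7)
The plan is to deduce both parts of Theorem \ref{thm4} from Theorem \ref{thm0} by checking its hypotheses for $A$ and for the operators $T=L_kA^{-1/2}$ and $T=V^{1/2}A^{-1/2}$. Condition $\bf{(H1)}$ is automatic from the construction of $A$ via the closed symmetric form $Q$. For $\bf{(H2)}$, I would invoke the diamagnetic inequality $|e^{-tA}f(x)|\le e^{-t\Delta}|f|(x)$ (see \cite{Si}); combined with the standard Gaussian bound for the heat kernel of $\Delta$, this in fact yields $\bf{(H3)}$ for $e^{-tA}$. The $L^2$-boundedness of $L_kA^{-1/2}$ and of $V^{1/2}A^{-1/2}$ is clear from (\ref{e1.4}) and the analogous inequality for $V^{1/2}$ built into $Q$.

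By Remark \ref{rem0'}(a), the estimate (\ref{cond0}) required by Theorem \ref{thm0} will follow once I verify (\ref{cond1}) and (\ref{cond2}). The key technical ingredient I would establish is the family of $L^2$ off-diagonal bounds, valid for all $s>0$, all closed sets $E,F\subset\mathbb{R}^n$, and $f$ supported in $E$:
\begin{equation*}
\|L_k e^{-sA}f\|_{L^2(F)} + \|V^{1/2}e^{-sA}f\|_{L^2(F)} \le \frac{C}{\sqrt s}\exp\Big(-\frac{d(E,F)^2}{cs}\Big)\|f\|_{L^2(E)}.
\end{equation*}
These are of Davies type: one conjugates $A$ by a weight $e^{\rho\phi}$ for a bounded Lipschitz $\phi$ with $|\nabla\phi|\le 1$, shows that the resulting perturbed form still generates a contraction-type semigroup on $L^2$, and optimizes in $\rho$. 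This is precisely the route followed in \cite{DOY} for the magnetic Laplacian and can be cited from there.

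Using the subordination formula $A^{-1/2}=\pi^{-1/2}\int_0^\infty e^{-sA}s^{-1/2}\,ds$, I would rewrite
\begin{equation*}
T(I-e^{-r_B^2A})^m f = \frac{1}{\sqrt\pi}\int_0^\infty L_k e^{-sA}(I-e^{-r_B^2A})^m f\,\frac{ds}{\sqrt s}
\end{equation*}
(and analogously with $V^{1/2}$) and split the $s$-integral at $s=r_B^2$. For $s\le r_B^2$, expand $(I-e^{-r_B^2A})^m=\sum_{\ell=0}^m\binom{m}{\ell}(-1)^\ell e^{-\ell r_B^2A}$ and estimate each summand by the off-diagonal bound against $S_j(B)$, gaining $\exp(-c(2^jr_B)^2/s)$. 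For $s>r_B^2$, use the identity $I-e^{-r_B^2A}=\int_0^{r_B^2} Ae^{-uA}\,du$ iterated $m$ times to extract the algebraic factor $(r_B^2/s)^m$, combined with Davies-Gaffney decay of the remaining semigroup factors. Balancing the two regimes and integrating in $s$ yields the required factor $2^{-2jm}$ in (\ref{cond1}); the argument for (\ref{cond2}) is analogous but simpler, since $r_B^{2m}A^m e^{-kr_B^2A}$ already has strong off-diagonal decay. Choosing $m>n(2-p)/(4p)$, Theorem \ref{thm0}(i) delivers part (i) of Theorem \ref{thm4}.

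For part (ii), it remains to verify that each Riesz transform is of weak type $(1,1)$; this follows from the $L^2$-boundedness together with the Gaussian bound $\bf{(H3)}$ by the standard heat-kernel-based Calder\'on--Zygmund argument (see \cite{DOY, A}). Theorem \ref{thm0}(ii) then produces the weak $(1,1)$ mapping $H^1_A(\mathbb{R}^n)\to L^{1,\infty}(\mathbb{R}^n)$ for both commutators. The principal obstacle is the $L^2$ off-diagonal estimate for $L_k e^{-sA}$: it is not immediate from Davies-Gaffney for $e^{-sA}$ (since $L_k$ is unbounded), nor from $L^2$-boundedness of $L_kA^{-1/2}$ (which does not preserve supports), so the weighted-semigroup detour of \cite{DOY} is essential and needs to be carried out carefully in the magnetic setting.
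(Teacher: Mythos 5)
Your overall architecture coincides with the paper's: both reduce Theorem \ref{thm4} to Theorem \ref{thm0} by verifying (\ref{cond1}) and (\ref{cond2}) for $\LA$ and $\VA$, both use the diamagnetic inequality to obtain $\bf{(H3)}$, both use the subordination formula $A^{-1/2}=\pi^{-1/2}\int_0^\infty e^{-sA}s^{-1/2}ds$ with a splitting of the $s$-integral, and both quote \cite{DOY} for the weak type $(1,1)$ of the Riesz transforms. The divergence is in the key lemma. You propose a genuine $L^2(E)\to L^2(F)$ Gaffney estimate for $L_ke^{-sA}$ and $V^{1/2}e^{-sA}$, to be proved by Davies' twisted-semigroup method. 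The paper instead proves (Lemma \ref{lem3}) a Gaussian-weighted kernel bound
$\int_{\RR^n}|L_k\widetilde{p}^k_t(x,y)|^2e^{\gamma|x-y|^2/t}dx\le Ct^{-(n+2)/2}$ for the time-derivative kernels $\widetilde{p}^k_t=t^k\partial_t^kp_t$, obtained by testing the form $Q$ against $\widetilde{p}^k_t(\cdot,y)e^{\gamma|\cdot-y|^2/t}\psi$ and exploiting the pointwise Gaussian bounds (\ref{e3.1}); this is only an $L^1(B)\to L^2(S_j(B))$ off-diagonal estimate, which suffices because one pays a final Cauchy--Schwarz on the ball $B$ (this is also why the paper only reaches the decay $2^{-2j(m-1)}$ in Proposition \ref{prop2.1} and must invoke Remark \ref{rem0'}(b)).

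Two cautions about your route. First, the attribution is off: \cite{DOY} does not carry out the $e^{\rho\phi}$-conjugation argument you describe; what it proves is precisely the weighted kernel estimate above (for $k=1$), and the paper explicitly states at the start of Section 5 that the Gaffney-estimate approach of Section 4 ``is not applicable'' to magnetic Schr\"odinger operators. So your key lemma is exactly the contested step and cannot be discharged by citation; if you insist on the $L^2\to L^2$ version you must prove it, taking care that with only $a_k\in L^2_{\rm loc}$ and $0\le V\in L^1_{\rm loc}$ the twisted form is handled correctly (the identity $L_k(e^{\rho\phi}u)=e^{\rho\phi}(L_ku+\rho\partial_k\phi\,u)$ and $V\ge0$ make the energy inequality plausible, but the passage from the time-integrated gradient bound to the pointwise-in-$t$ bound $\|e^{\rho\phi}L_ku(t)\|_2\lesssim t^{-1/2}e^{c\rho^2t}\|f\|_2$ requires the analyticity of the twisted semigroup and is not automatic). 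Second, for (\ref{cond2}) your ``analogous but simpler'' step still needs off-diagonal control of $L_ke^{-sA}$ composed with $(tAe^{-tA})^m$; in the paper this is exactly what forces the extension of the weighted estimate to all time derivatives $\widetilde{p}^k_t$, which is the new content of Lemma \ref{lem3} beyond \cite{DOY} and \cite{A}. With these points repaired (or with your key lemma replaced by the paper's weighted kernel bound), the rest of your argument goes through as in the paper.
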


Recently, we had learnt that in \cite{JYY} the authors also obtained the results in (i) of Theorem \ref{thm4} by using the different approach.

\subsection{ Some kernel estimates on heat semigroups}
 \noindent
 Let $A=-(\nabla-i{\vec a})\cdot (\nabla-i{\vec a})+V$  be the magnetic Schr\"odinger
operator  in (\ref{e1.3}). By the well known diamagnetic inequality
(see, Theorem 2.3 of \cite{Si} and \cite{CFKS} for instance) we have
the pointwise inequality
\begin{eqnarray*}
\big|e^{-tA}f(x)\big|\leq e^{t\triangle}\big(|f|\big)(x) \ \ \
\forall t\geq 0, \ \ f\in L^2({\mathbb R}^n).
\end{eqnarray*}
\noindent This inequality implies in particular that the semigroup
$e^{-tA}$ maps $L^1({\Bbb R}^n)$ into $L^{\infty}({\Bbb R}^n)$ and
that the kernel $p_t(x,y)$ of $e^{-tA}$ satisfies
\begin{eqnarray}
  \big|p_t(x,y)\big|\leq  (4\pi t)^{-{n\over 2}}
 \exp\Big(-\frac{|x-y|^2}{4t}\Big)
\label{e2.5}
\end{eqnarray}

\noindent
  for all $t>0$ and  almost all $x,y\in {\mathbb R}^n$.

Note that $A$ satisfies conditions $\bf{(H1)}$ and $\bf{(H2)}$. So, for $0<p\leq 1$, we denote by $H^p_A(\mathbb{R}^n)$ the Hardy space associated to the operator $A$.
Note that Gaussian upper bounds carry over from heat kernels to
their time derivatives of its kernels. That is, for each
$k\in{\mathbb N}$, there exist two positive constants $c_k$ and
$C_k$ such that the time derivatives of $p_t$ satisfy
\begin{eqnarray}
\Big|{\partial_t^k}  p_t(x,y) \Big|\leq
  C_k   t^{-(n+2k)/2}\exp\Big(-\frac{|x-y|^2}{c_kt}\Big)
\label{e3.1}
\end{eqnarray}

\noindent for all $t>0$ and  almost all $x,y\in {\mathbb R}^n$. For
the proof of (\ref{e3.1}), see, for example, \cite{CD}, \cite{Da}
and \cite[Theorem 6.17]{Ou}.\\

We let $\widetilde{p}_t^k(x,y)=t^k(d^k/dt^k)p_t(x,y)$. In the sequel, we always use the notation $L_k \widetilde{p}_t^k(x,y)$ to mean $L_k \widetilde{p}_t^k(\cdot,y)(x)$.

\subsection{The proof of boundedness of commutators}
To prove the main result of this section,  we need the following lemma which gives a
weighted estimate for $L_k\widetilde{p}^k_t(x,y)$.
\begin{lem}\label{lem3}
Let $A=-(\nabla -i\vec{a})(\nabla -i\vec{a})+V$ be the magnetic
Schr\"odinger operator in (\ref{e1.3}). For each $k$ and $\gamma>0$, there exists $C>0$ such that
\begin{equation}\label{eq2.1}
\int_{\RR^n}|V^{1/2}(x)\widetilde{p}^k_t(x,y)|^2e^{\gamma \frac{|x-y|^2}{t}}\
dx+\sum_{k=1}^{n}\int_{\RR^n}|L_k\widetilde{p}^k_t(x,y)|^2 e^{\gamma
\frac{|x-y|^2}{t}}\ dx\leq \frac{C}{t^{\frac{n+2}{2}}}
\end{equation}
for all $t$ and $y\in \RR^n$.
\end{lem}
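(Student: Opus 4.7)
My plan is to reduce the estimate to a quadratic‑form identity for the sesquilinear form $Q$ associated with $A$, and then to bound the pieces using the pointwise Gaussian control on $\partial_t^k p_t$ already available from (\ref{e3.1}). Fix $y\in\RR^n$ and write $f(x):=\widetilde p_t^k(x,y)=t^k\partial_t^k p_t(x,y)$ as a function of $x$, together with the positive weight $w(x):=e^{\gamma |x-y|^2/t}$. The goal is to show
\[
\mathcal{E}:=\sum_{k=1}^n\int |L_k f|^2 w\,dx+\int V|f|^2 w\,dx\;\le\; C\,t^{-(n+2)/2}.
\]
Since $\partial_t p_t=-Ap_t$, one has $Af(x)=-t^k\partial_t^{k+1}p_t(x,y)$, so (\ref{e3.1}) applied at orders $k$ and $k+1$ gives the pointwise bounds
\[
|f(x)|\le C t^{-n/2}e^{-|x-y|^2/(ct)},\qquad |Af(x)|\le C t^{-1-n/2}e^{-|x-y|^2/(ct)},
\]
for some $c>0$.

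Choose $\gamma$ small enough that $\gamma<1/c$, so that $fw\in\mathcal{D}(Q)$ and the identification $Q(f,fw)=\langle Af,fw\rangle$ is legitimate (a routine truncation of $w$ by $\min(w,R)$ and a monotone–convergence argument handles the justification). Using $w$ is real, so that $L_k(fw)=w\,L_k f+f\,\partial_k w$, expand
\[
Q(f,fw)=\sum_k\int|L_k f|^2 w\,dx+\int V|f|^2 w\,dx+\sum_k\int L_k f\cdot\bar f\cdot\partial_k w\,dx,
\]
and take real parts to obtain
\[
\mathcal{E}=\operatorname{Re}\langle Af,fw\rangle-\operatorname{Re}\sum_k\int L_k f\cdot\bar f\cdot\partial_k w\,dx.
\]

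The first term is bounded directly: by the pointwise estimates above and the Gaussian integral $\int e^{-\beta|x-y|^2/t}\,dx\lesssim_\beta t^{n/2}$ (with $\beta=2/c-\gamma>0$), one gets $|\langle Af,fw\rangle|\le C t^{-(n+2)/2}$. For the cross term, $\partial_k w=(2\gamma/t)(x_k-y_k)w$, so Cauchy–Schwarz plus Young's inequality with a small parameter $\epsilon$ yields
\[
\Big|\sum_k\int L_k f\cdot\bar f\cdot\partial_k w\,dx\Big|\le \epsilon\sum_k\int|L_k f|^2 w\,dx+\frac{C_\gamma}{\epsilon}\int|f|^2\frac{|x-y|^2}{t^2}w\,dx,
\]
and the moment estimate $\int |x-y|^2 e^{-\beta|x-y|^2/t}\,dx\lesssim_\beta t^{1+n/2}$ bounds the second integral by $Ct^{-(n+2)/2}$. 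Taking $\epsilon=1/2$ absorbs the $\sum_k\int|L_k f|^2 w\,dx$ contribution into $\mathcal{E}$ on the left, and we conclude $\mathcal{E}\le C t^{-(n+2)/2}$.

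The main obstacle I anticipate is not in the integral estimates, which are standard once the pointwise bounds are in hand, but rather in the quadratic‑form manipulation with the unbounded weight $w$: one must ensure that $fw$ lies in $\mathcal{D}(Q)$ and that the integration by parts encoded in the form identity is valid. The cleanest remedy is to replace $w$ by the truncation $w_R(x):=\min\bigl(w(x),R\bigr)$, which is bounded and Lipschitz, carry out the whole argument with $w_R$, and then let $R\to\infty$; the pointwise Gaussian decay of $f$ and $Af$ guarantees that the limit of every integral on both sides exists and equals the corresponding expression with $w$.
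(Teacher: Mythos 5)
Your proposal is correct and follows essentially the same route as the paper: both test the quadratic form $Q$ against $\widetilde p^k_t(\cdot,y)$ multiplied by the Gaussian weight, use the bound (\ref{e3.1}) at orders $k$ and $k+1$ to control $\langle Af, fw\rangle$, and absorb the cross term coming from $\nabla w$. The only differences are cosmetic — the paper truncates with a compactly supported cutoff $\psi$ and closes the estimate via the self-improving inequality $I\le a+b\sqrt{I}$, while you truncate the weight itself and absorb via Young's inequality — and, like the paper's own argument, your proof requires $\gamma$ smaller than the constant in the Gaussian bound, which is all that is ever used.
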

\begin{proof}
In \cite{DOY}, the authors proved this for $k=1$ and in the case when $k=0$, the proof can be found in \cite{A}.
We now adapt these estimates to prove (\ref{eq2.1}).\\
Let $\psi$ be a $C^\infty$ function with compact support on $\RR^n$
such that $0\leq\psi\leq 1$. Consider
$$
I_t(\psi)=\sum_{k=1}^n\int_{{\mathbb R}^n} |L_k \widetilde{p}^k_t(x,y)|^2
e^{\gamma\f{|x-y|^2}{t}}\psi(x) dx.
$$
\noindent Using Lemma 2.5 in \cite{Si}, we have
\begin{eqnarray}
I_t(\psi)&=& \sum_{k=1}^n\int_{{\mathbb R}^n} {\partial \over
\partial x_k}\Big(e^{-i\lambda_k}{ \widetilde{p}^k_t}(x,y)\Big) {\overline {
{\partial \over \partial x_k}\Big(e^{-i\lambda_k}{  \widetilde{p}^k_t}(x,y)\Big)}}
e^{\gamma\f{|x-y|^2}{t}}\psi(x) dx\nonumber\\[2pt]
&=& \sum_{k=1}^n\int_{{\mathbb R}^n} {\partial \over \partial
x_k}\Big(e^{-i\lambda_k}{  \widetilde{p}^k_t}(x,y)\Big) {\overline { {\partial
\over \partial x_k}\Big(e^{-i\lambda_k}{
p_t}(x,y)e^{\gamma\f{|x-y|^2}{t}}\psi(x)\Big)}}
  dx\nonumber\\[2pt]
&&- \sum_{k=1}^n\int_{{\mathbb R}^n} {\partial \over \partial
x_k}\Big(e^{-i\lambda_k}{  \widetilde{p}^k_t}(x,y)\Big) {\overline {
e^{-i\lambda_k}{  \widetilde{p}^k_t}(x,y)}} {\partial \over \partial
x_k}\Big(e^{\gamma\f{|x-y|^2}{t}}\psi(x)\Big)
  dx\nonumber\\[2pt]
&=&{  II_1 -II_2},
\label{e2.00}
\end{eqnarray}
where $\lambda_1, \ldots, \lambda_n$ are functions in  $L^2_{{\rm loc}}$ satisfying
$$
L_k=e^{i\lambda_k}\f{\partial}{\partial x_k}e^{-i\lambda_k}, \ \ k=1, \ldots, n.
$$

\noindent From the fact that $\psi$ has compact support, we have
$$
{\widetilde{p}^k_t}(\cdot,y)e^{\gamma\f{|\cdot-y|^2}{t}}\psi(\cdot)\in {\mathcal
D}(Q)\subset {\mathcal D}(L_k).
$$

\noindent We can then write the first term ${  II_1}$ as
$$
{  II_1}=\sum_{k=1}^n\int_{{\mathbb R}^n} L_k{\widetilde{p}^k_t}(x,y) {\overline {
L_k ({\widetilde{p}^k_t}(\cdot,y)e^{\gamma\f{|\cdot-y|^2}{t}}\psi)(x) }}
  dx.
$$

\noindent Since $0\leq V$ and $0\leq \psi$, we obtain
\begin{eqnarray*}
{  II_1}&\leq&Q({\widetilde{p}^k_t}(\cdot,y),\ {\widetilde{p}^k_t}(\cdot,y)e^{\gamma\f{|\cdot-y|^2}{t}}\psi)\\[2pt]
&=&\int_{{\mathbb R}^n} A{\widetilde{p}^k_t}(x,y) {\overline {  {\widetilde{p}^k_t}(x,y)}}
e^{\gamma\f{|x-y|^2}{t}}\psi(x) dx.
\end{eqnarray*}

\noindent On the other hand, we have $A{\widetilde{p}^k_t}(x,y) =t^{k}{d^{k+1}\over d  t^{k+1}}
p_t(x,y).$ We then apply (\ref{e3.1}) to obtain
$$
{  II_1} \leq
\int_{\RR^n}\f{1}{t^{n/2+1}}e^{-c_2\f{|x-y|^2}{t}}\frac{1}{t^{n/2}}e^{-c_1\f{|x-y|^2}{t}}e^{\gamma\f{|x-y|^2}{t}}\psi(x)dx.
$$
Hence for any $\gamma<c_1$ there exists a constant $c>0$ independent
of $\psi$ such that
\begin{equation}\label{e3.3}
{  II_1} \leq \f{c}{t^{n/2+1}},
\end{equation}
since $0\leq \psi\leq 1$.\\
Next, we rewrite the term ${  II_2}$   as follows:
\begin{eqnarray*}
{  II_2}&=& \sum_{k=1}^n\int_{{\mathbb R}^n} e^{i\lambda_k}
{\partial \over \partial x_k}\Big(e^{-i\lambda_k}{\widetilde{p}^k_t}(x,y)\Big)
{\overline
{p_t}(x,y)} e^{\gamma\f{|x-y|^2}{t}}\\[2pt]
&~&~~~\times\Big[{\partial \over \partial
x_k}\psi(x)+\f{2\beta(x_k-y_k)}{t}\psi(x)\Big]dx.
\end{eqnarray*}
This gives
\begin{eqnarray*}
{  II_2}&=& \sum_{k=1}^n\f{c}{\sqrt{t}}\int_{{\mathbb R}^n} |L_k{\widetilde{p}^k_t}(x,y)||{p_t}(x,y)| e^{2\gamma\f{|x-y|^2}{t}}\psi(x)dx\\[2pt]
&~&~~~+\sum_{k=1}^n\int_{{\mathbb R}^n} |L_k{\widetilde{p}^k_t}(x,y)||{\widetilde{p}^k_t}(x,y)|
e^{\gamma\f{|x-y|^2}{t}}\Big|{\partial \over \partial
x_k}\psi(x)\Big|dx\\[2pt]
&=& J_1(\psi)+J_2(\psi).
\end{eqnarray*}
 \noindent
Then by (\ref{e3.1}) and Cauchy-Schwarz inequality,
\begin{eqnarray*}
J_1(\psi)&\leq& \f{c}{\sqrt{t}}\sum_{k=1}^n\Big(\int_{{\mathbb R}^n} t^{-n}e^{3\beta-2c_1\f{|x-y|^2}{t}}dx\Big)^{\f{1}{2}}\Big(\int_{{\mathbb R}^n} |L_k{\widetilde{p}_t}(x,y)|^2e^{\gamma\f{|x-y|^2}{t}}\psi(x)dx\Big)^{\f{1}{2}}\\[2pt]
&\leq& \f{c}{\sqrt{t^{n/2+1}}}\sqrt{I_t(\psi)},
\end{eqnarray*}
provided $\beta<\f{2c_1}{3}$.\\
\noindent Using this estimate and (\ref{e3.3}), we have
\begin{equation}\label{e3.4}
I_t(\psi)\leq c\Big(\f{1}{t^{n/2+1}}+J_2(\psi)\Big),
\end{equation}
where $c$ is a constant independent of $\psi$.\\
Now apply (\ref{e3.4}) with $\psi_j(x)=\psi(x/j)$, where $\psi$ is a
function such that $\psi(x)=1$ for all $x$ with $|x|\leq 1$. It is
not difficult to show that $\lim\limits_{j\rightarrow\infty}J_2(\psi_j)=0$.
Then apply Fatou's lemma to (\ref{e3.4}) with $\psi_j$ we have
$$
\int_{{\mathbb R}^n}
|L_k{\widetilde{p}^k_t}(x,y)|^2e^{\gamma\f{|x-y|^2}{t}}dx\leq
\f{c}{t^{n/2+1}}.
$$
For the estimate of $\int_{{\mathbb R}^n}
|V^{1/2}(x)\widetilde{p}^k_t(x,y)|^2e^{\gamma\f{|x-y|^2}{t}}dx$, we note that
\begin{eqnarray*}
\int_{{\mathbb R}^n}
|V^{1/2}(x)\widetilde{p}^k_t(x,y)|^2e^{\gamma\f{|x-y|^2}{t}}\psi(x) dx =Q(\widetilde{p}^k_t(\cdot, y), \widetilde{p}^k_t(\cdot, y)e^{\gamma\f{|\cdot-y|^2}{t}}\psi)-
II_1.
\end{eqnarray*}
From the estimates of both terms, we obtain that
$$
\int_{{\mathbb R}^n}
|V^{1/2}(x)\widetilde{p}^k_t(x,y)|^2e^{\gamma\f{|x-y|^2}{t}}\psi(x) dx \leq \f{c}{t^{n/2+1}}.
$$
At this stage, repeating the above argument, one has
$$
\int_{{\mathbb R}^n}
|V^{1/2}(x)\widetilde{p}^k_t(x,y)|^2e^{\gamma\f{|x-y|^2}{t}} dx \leq \f{c}{t^{n/2+1}}.
$$
This finishes our proof.
\end{proof}

The following proposition gives an estimate for the operators $\LA$
and $\VA$ which will be useful for the proof of Theorem \ref{thm4}
\begin{prop}\label{prop2.1} For all $m\geq 1$ there exists $C>0$ such that
for all $j\geq 2$, all balls $B$ and all $f\in L^1(\RR^n)$ with
support in $B$
\begin{equation}\label{eq4.1}
\Big(\int_{S_j(B)}|\VA(I-e^{-r_B^2A})^m f(x)|^2\
dx\Big)^{\frac{1}{2}}\leq C\ 2^{-2j(m-1)}\Big(\int_B|f(x)|^2 dx\Big)^{1/2}
\end{equation}
and for all $k = 1, \cdots, n$,
\begin{equation}\label{eq4.2}
\Big(\int_{S_j(B)}|\LA(I-e^{-r_B^2A})^m f(x)|^2\
dx\Big)^{\frac{1}{2}}\leq C\ 2^{-2j(m-1)}\Big(\int_B|f|^2dx\Big)^{1/2} .
\end{equation}
\end{prop}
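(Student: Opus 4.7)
The plan is to combine the subordination formula $A^{-1/2}=\frac{1}{\sqrt{\pi}}\int_0^\infty e^{-sA}\,ds/\sqrt{s}$ with the algebraic identity
$$(I-e^{-r_B^2 A})^m = \int_0^{r_B^2}\!\cdots\int_0^{r_B^2} A^m e^{-(\sigma_1+\cdots+\sigma_m)A}\,d\sigma_1\cdots d\sigma_m,$$
obtained by iterating $I-e^{-r_B^2 A}=\int_0^{r_B^2}A e^{-\sigma A}\,d\sigma$. Substituting this into the first rewrites $\VA(I-e^{-r_B^2 A})^m f$ as an $(m+1)$-fold integral of $V^{1/2}A^m e^{-tA}f$ with $t=s+\sigma_1+\cdots+\sigma_m$, against the measure $d\vec\sigma\,ds/\sqrt{s}$. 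This reduces the proof to an off-diagonal $L^2$ bound on $V^{1/2}A^m e^{-tA}$. The argument for $\LA(I-e^{-r_B^2 A})^m f$ is identical, with $V^{1/2}$ replaced by $L_k$ throughout.

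The required off-diagonal bound comes from the identity $A^m p_t(x,y)=(-1)^m t^{-m}\widetilde{p}_t^m(x,y)$, which together with Lemma \ref{lem3} applied with time-derivative order $m$ gives
$$\int_{\RR^n}|V^{1/2}(x)A^m p_t(x,y)|^2 e^{\gamma|x-y|^2/t}\,dx \leq C\,t^{-2m-(n+2)/2}.$$
For $x\in S_j(B)$ and $y\in B$ with $j\ge 2$ one has $|x-y|\ge 2^{j-2}r_B$, so a weighted Cauchy--Schwarz inequality in $y$ (with Gaussian weight $e^{\pm\gamma|x-y|^2/t}$) followed by Fubini yields
$$\|V^{1/2}A^m e^{-tA}f\|_{L^2(S_j(B))} \leq C\,|B|^{1/2}\,t^{-m-(n+2)/4}e^{-\gamma' 4^j r_B^2/t}\|f\|_{L^2(B)}.$$
The analogue with $L_k$ in place of $V^{1/2}$ holds by the same argument.

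To finish, split the $s$-integral at $s=r_B^2$ and substitute the above estimate with $t=s+\sigma_1+\cdots+\sigma_m$. Use the elementary inequality $e^{-\lambda/t}\le C_N(t/\lambda)^N$ with $\lambda=\gamma' 4^j r_B^2$, bounding $t$ from below by $s$ when the exponent $N-m-(n+2)/4$ is negative, and from above by $(m+1)r_B^2$ on $\{s\le r_B^2\}$ (resp.\ by $(m+1)s$ on $\{s\ge r_B^2\}$) when it is positive. Each piece then reduces to an elementary integral in $s$. Choosing $N$ in a narrow window near $m+n/4$, the powers of $r_B$ from $|B|^{1/2}=c r_B^{n/2}$, from the $\vec\sigma$-integration (contributing $r_B^{2m}$) and from $(4^j r_B^2)^{-N}$ cancel exactly, producing a dimensionless bound $C\,4^{-jN}$. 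Since $m+n/4\ge m-1$ for every $n\ge 0$, one can always arrange $N\ge m-1$, giving the desired decay $2^{-2j(m-1)}$.

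The main obstacle is the simultaneous balancing of three constraints on the exponent $N$: integrability of $t^{N-m-(n+2)/4}/\sqrt{s}$ near $s=0$, integrability at $s=\infty$, and $N\ge m-1$ so that $4^{-jN}$ reaches the target decay. The cancellation of $r_B$-powers between the volume factor, the $\vec\sigma$-measure and the polynomial bound on the exponential must also be verified explicitly.
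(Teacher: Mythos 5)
Your proof is correct, and it actually delivers any decay $4^{-jN}$ with $N<m+\f{n}{4}$, which covers the stated $2^{-2j(m-1)}$; but it takes a genuinely different route from the paper's. The paper keeps $(I-e^{-r_B^2A})^mA^{-1/2}$ as a single one-parameter average $\int_0^\infty g_r(t)e^{-tA}\,dt$ and quotes from \cite{ACDH} the key decay $\int_0^\infty|g_r(t)|e^{-c4^jr_B^2/t}\,\f{dt}{\sqrt{t}}\leq C4^{-jm}$; it then only needs Lemma \ref{lem3} at derivative order $k=0$ (the weighted bound for $L_kp_t$ and $V^{1/2}p_t$ themselves), and Minkowski's inequality finishes the proof in a few lines with the slightly better decay $2^{-2jm}$. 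You instead expand $(I-e^{-r_B^2A})^m$ as an $m$-fold time integral of $A^me^{-\sigma A}$, which forces you to invoke Lemma \ref{lem3} at derivative order $m$ (through $A^mp_t=(-1)^mt^{-m}\widetilde{p}^m_t$) and to carry out the $(s,\vec{\sigma})$-integration explicitly. What this buys is self-containedness — no appeal to the external computation of $g_r$ — and it is essentially the same machinery the paper uses for Proposition \ref{prop2.2}, so your argument in effect unifies the proofs of the two propositions; the cost is heavier bookkeeping. One such point to state carefully: a single exponent $N$ in $e^{-\lambda/t}\leq C_N(t/\lambda)^N$ cannot serve both halves of the $s$-integral, since integrability of $s^{N-m-(n+2)/4-1/2}$ near $s=0$ (when you bound $t\geq s$) requires $N>m+\f{n}{4}$, or else the crude bound $t\leq(m+1)r_B^2$, whereas convergence at $s=\infty$ forces $N<m+\f{n}{4}$; you must choose $N$ separately on each piece, as your case split implicitly does, and the overall decay is then the smaller of the two exponents, which can still be taken $\geq m-1$. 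With that understood, the $r_B$-power cancellation you describe does come out exactly to $r_B^{-n/2}$ against $|B|^{1/2}$ on each piece, and the proof is complete.
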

\begin{proof} We will prove only (\ref{eq4.2}). The inequality (\ref{eq4.1}) can be treated by
a similar argument.\\
We  adapt an argument used in \cite{ACDH} (see also \cite{A}) to our situation. Fix a ball $B$ with radius $r_B$ and $f\in L^1(\RR^n)$ supported in
$B$. Observe that
\begin{equation*}
A^{-1/2}=\frac{1}{\sqrt{\pi}}\int_{0}^{\infty}e^{-tA}\frac{dt}{\sqrt{t}} .
\end{equation*}
We have
\begin{equation*}
\LA(I-e^{-r_B^2A})^mf=c\int_{0}^{\infty}L_ke^{-tA}(I-e^{-r_B^2A})^m\frac{dt}{\sqrt{t}}=c\int_{0}^{\infty}g_r(t)L_ke^{-tA}dt,
\end{equation*}
where $g_r:\RR^+\rightarrow \RR$ is a function such that
\begin{equation*}
\int_{0}^{\infty}|g_r(t)|e^{\frac{-c4^jr_B^2}{t}}\frac{dt}{\sqrt{t}}
\leq C_m4^{-jm}.
\end{equation*}
See [ACDH, p.932].
Hence the composite operator $\LA(I-e^{-r_B^2A})^m$ has an associated kernel
$\mathcal{K}_{s,k}(y,z)$ defined by
\begin{equation*}
\mathcal{K}_{s,k}(y,z)=c\int_{0}^{\infty}g_r(t)L_kp_s(y,z)dt.
\end{equation*}
By invoking
Lemma \ref{lem3},
\begin{equation*}
 \Big(\int_{\RR^n}|L_kp_t(x,y)|^2e^{\gamma
\frac{|x-y|^2}{t}}\ dx\Big)^{\frac{1}{2}}\leq
\frac{C}{t^{\frac{n+2}{4}}}
\end{equation*}
for all $t>0$ and $y\in \RR^n$ and some $\gamma>0$. This implies
that for all $j>0, y\in B$ and all $t>0$,
\begin{equation*}
\begin{aligned}
 \Big(\int_{S_j(B)}|L_kp_t(x,y)|^2 \ dx\Big)^{\frac{1}{2}}
&\leq \frac{C}{\sqrt{t}}e^{\frac{-c4^jr_B^2}{t}}\frac{|2^{j}B|^{1/2}}{t^{\frac{n}{4}}}\frac{1}{|2^{j}B|^{1/2}}\\
&\leq \frac{C}{\sqrt{t}}e^{\frac{-c4^jr_B^2}{t}}\Big(\frac{4^jr_B^2}{t}\Big)^{\frac{n}{4}}\frac{1}{|2^{j}B|^{1/2}}\\
&\leq \frac{C}{\sqrt{t}}e^{\frac{-\alpha4^jr_B^2}{t}}\frac{1}{|2^{j}B|^{1/2}}\\
&\leq \frac{C}{\sqrt{t}}e^{\frac{-\alpha4^jr_B^2}{t}}\frac{1}{|B|^{1/2}}\\
\end{aligned}
\end{equation*}
for some $\alpha<c$. Note that in the last inequality we use the fact that
$s^be^{-cs}<Ce^{-\alpha s}$ for all $s>0$ and $\alpha<c$.\\
Using Minkowski's inequality we obtain that the LHS of (\ref{eq4.2})
is dominated by \begin{equation*}
\begin{aligned}
\int_{0}^{\infty}|g_r(t)|\int|f(y)&|\Big(\int_{S_j(B)}|L_kp_s(x,y)|^2dx\Big)^{1/2}dydt\\
&\leq C \frac{1}{|B|^{1/2}}\int_{0}^{\infty}|g_r(t)|e^{\frac{-\alpha
4^jr_B^2}{t}}\frac{dt}{\sqrt{t}}\Big(\int_B|f(y)|dy\Big)\\
&\leq C
4^{-mj}\f{1}{|B|^{1/2}}\int_B|f(y)|dy\\
&\leq C
4^{-mj}\Big(\int_B|f(x)|^2dx\Big)^{1/2}.
\end{aligned}
\end{equation*}
The proof is complete.
\end{proof}

\begin{prop}\label{prop2.2} For all $m\geq 1$ and $t\geq 0$ there exists $C>0$ such that
for all $j\geq 2$, all balls $B$, $\f{1}{m}r_B^2\leq t\leq r_B^2$ and all $f$ with
support in $B$
\begin{equation}\label{eq5.1}
\Big(\int_{S_j(B)}|\VA(tAe^{-tA})^m f(x)|^2\
dx\Big)^{\frac{1}{2}}\leq C2^{-2jm}\Big(\int_B|f(x)|^2dx\Big)^{1/2}
\end{equation}
and for all $k = 1, \cdots , n$,
\begin{equation}\label{eq5.2}
\Big(\int_{S_j(B)}|\LA (tAe^{-tA})^m f(x)|^2\
dx\Big)^{\frac{1}{2}}\leq C2^{-2jm}\Big(\int_B|f(x)|^2dx\Big)^{1/2} .
\end{equation}
\end{prop}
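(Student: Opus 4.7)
The plan is to parallel the proof of Proposition~\ref{prop2.1}, representing $A^{-1/2}$ as a Laplace transform of the heat semigroup and converting the bound to an integral kernel estimate that can be controlled by Lemma~\ref{lem3}. First I would write
$$
A^{-1/2} = \frac{1}{\sqrt{\pi}}\int_0^\infty e^{-sA}\,\frac{ds}{\sqrt{s}},
$$
and use that all operators involved are functions of $A$ (hence commute) to obtain
$$
V^{1/2}A^{-1/2}(tAe^{-tA})^m f = \frac{1}{\sqrt{\pi}} \int_0^\infty V^{1/2}\,t^m A^m e^{-(s+mt)A}f\,\frac{ds}{\sqrt{s}}.
$$
Since $A^m e^{-uA}$ has kernel $(-1)^m\partial_u^m p_u(x,y)=(-1)^m u^{-m}\widetilde{p}_u^m(x,y)$, the integrand above has kernel $(-1)^m V^{1/2}(x)\tfrac{t^m}{(s+mt)^m}\widetilde{p}_{s+mt}^{m}(x,y)$.

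Next I would apply Minkowski's integral inequality and then Lemma~\ref{lem3} combined with the volume trick used in Proposition~\ref{prop2.1}: for $y\in B$ and $u=s+mt$,
$$
\Bigl(\int_{S_j(B)} \bigl|V^{1/2}(x)\widetilde{p}_u^{m}(x,y)\bigr|^2 dx\Bigr)^{1/2}
\leq \frac{C}{u^{(n+2)/4}}\,e^{-\gamma 4^j r_B^2/u}
\leq \frac{C\,e^{-\alpha 4^j r_B^2/u}}{u^{1/2}|B|^{1/2}},
$$
where in the last step I absorb the factor $(4^j r_B^2/u)^{n/4}$ into the exponential and bound $(2^j r_B)^{-n/2}\leq |B|^{-1/2}$. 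Together with Cauchy--Schwarz on $\|f\|_{L^1(B)}\leq |B|^{1/2}\|f\|_{L^2(B)}$, this reduces the claim to the one-variable estimate
$$
I := \int_0^\infty \frac{t^m}{(s+mt)^{m+1/2}}\,e^{-\alpha 4^j r_B^2/(s+mt)}\,\frac{ds}{\sqrt{s}} \leq C\,2^{-2jm}.
$$

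I would bound $I$ by splitting at $s=t$. For $s\in[0,t]$ we have $s+mt\sim t\sim r_B^2$, so the integrand is at most $\frac{C}{\sqrt{t}}e^{-\alpha' 4^j}\,s^{-1/2}$, yielding a contribution $\leq C e^{-\alpha' 4^j}$, which decays faster than $2^{-2jm}$. For $s>t$ we have $s+mt\sim s$, so
$$
\int_t^\infty \frac{t^m}{s^{m+1/2}}\,e^{-\alpha'' 4^j r_B^2/s}\,\frac{ds}{\sqrt{s}}
= t^m \int_t^\infty \frac{e^{-\alpha'' 4^j r_B^2/s}}{s^{m+1}}\,ds,
$$
and the substitution $u=4^j r_B^2/s$ converts this to $\frac{t^m}{(4^j r_B^2)^m}\int_0^{u_0} u^{m-1}e^{-\alpha'' u}du \leq C\,4^{-jm}$, using $t\sim r_B^2$. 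Adding the two contributions proves \eqref{eq5.1}.

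The proof of \eqref{eq5.2} is structurally identical: one replaces the weighted bound on $V^{1/2}\widetilde{p}_u^m(\cdot,y)$ with the corresponding one for $L_k\widetilde{p}_u^m(\cdot,y)$, which is the other half of Lemma~\ref{lem3}, and runs the same integral estimate. The main technical obstacle is the integral estimate for $I$: the key point is that the factor $(s+mt)^{-m-1/2}$ produced by the $\widetilde{p}^m_u$ kernel, together with the constraint $t\sim r_B^2$, is exactly what is needed to extract the full decay $4^{-jm}$ from the exponential via the change of variables, whereas a naive use of $e^{-x}\leq C_\beta x^{-\beta}$ would only yield $2^{-2j(m-1+\epsilon)}$.
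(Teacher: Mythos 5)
Your proposal is correct and follows essentially the same route as the paper: the Laplace-transform representation $A^{-1/2}=\pi^{-1/2}\int_0^\infty e^{-sA}\,ds/\sqrt{s}$, composition of the semigroups into $\widetilde{p}^m_{u}$ with $u\sim s+mt$, Minkowski's inequality plus the weighted bound of Lemma \ref{lem3}, and a split of the $s$-integral at $s=t$. The only (harmless) difference is in the tail: your substitution $u=4^jr_B^2/s$ extracts the full $4^{-jm}$, whereas the paper's own computation bounds the exponential by a power $\bigl(\tfrac{t+s}{4^jr_B^2}\bigr)^{m-1}$ and settles for $4^{-j(m-1)}$, which is still sufficient for the application by Remark \ref{rem0'}(b).
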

\begin{proof}
Firstly, observe that
$$\LA f=\f{1}{2\sqrt{\pi}}\int_{0}^\infty L_k e^{-sA}f\f{ds}{\sqrt{s}}=\f{m}{2\sqrt{\pi}}\int_{0}^\infty L_k e^{-msA}f\f{ds}{\sqrt{s}}.
$$
So, we obtain
\begin{equation*}
\begin{aligned}
\Big(\int_{S_j(B)}|&\LA (tAe^{-tA})^m f(x)|^2\
dx\Big)^{\frac{1}{2}}\\
&= C\Big(\int_{S_j(B)}\Big|\int_0^\infty L_k e^{-msA}(tAe^{-tA})^mf(y)dy\f{ds}{\sqrt{s}}\Big|^2\
dx\Big)^{\frac{1}{2}}\\
&= C\Big(\int_{S_j(B)}\Big|\int_0^\infty\int_{B}\Big(\f{t}{t+s}\Big)^{m} L_k \widetilde{p}^m_{m(t+s)}(x,y)f(y)dy\f{ds}{\sqrt{s}}\Big|^2\
dx\Big)^{\frac{1}{2}}\\
\end{aligned}
\end{equation*}
By Minkowski's inequality and Lemma \ref{lem3},
\begin{equation*}
\begin{aligned}
\Big(\int_{S_j(B)}|&\LA (tAe^{-tA})^m f(x)|^2\
dx\Big)^{\frac{1}{2}}\\
&\leq C \int_0^\infty\Big(\f{t}{t+s}\Big)^{m}\int_{B}f(y) \Big(\int_{S_j(B)}\Big|L_k \widetilde{p}^m_{m(t+s)}(x,y)\Big|^2\
dx\Big)^{\frac{1}{2}}dy\f{ds}{\sqrt{s}}\\
&\leq C \int_0^\infty \Big(\f{t}{t+s}\Big)^{m}\exp\Big\{-c\f{d^2(S_j(B),B)}{t+s}\Big\}\f{ds}{\sqrt{s}(t+s)^{\f{n+2}{4}}}\int_{B}f(y)dy\\
&\leq C \int_0^\infty \Big(\f{t}{t+s}\Big)^{m}\exp\Big\{-c\f{4^jr_B^2}{t+s}\Big\}\f{ds}{\sqrt{s}(t+s)^{\f{n+2}{4}}}r_B^{n/2}||f||_{L^2(B)}\\
&\leq C ||f||_{L^2(B)}\Big(\int_0^{t}\ldots + \int_{t}^\infty\ldots\Big)=C||f||_{L^2(B)}(I+II).
\end{aligned}
\end{equation*}
Let us estimate $I$ first. We have
\begin{equation*}
\begin{aligned}
\int_0^{t} \Big(\f{t}{t+s}\Big)^{m}&\exp\Big\{-c\f{4^jr_B^2}{t+s}\Big\} \Big(\f{r_B^2}{t+s}\Big)^{\f{n+2}{4}}\f{ds}{r_B\sqrt{s}}\\
&\leq C\int_0^{t} \Big(\f{t}{t+s}\Big)^m\Big(\f{t+s}{4^jr_B^2}\Big)^{m}\Big(\f{r_B^2}{t}\Big)^{\f{n+2}{4}}\f{ds}{r_B\sqrt{s}}\\
&\leq C\int_0^{t} \Big(\f{t}{4^jr_B^2}\Big)^m\f{ds}{r_B\sqrt{s}}\\
&\leq C 4^{-jm}.
\end{aligned}
\end{equation*}
We now estimate the second term $II$. We have
\begin{equation*}
\begin{aligned}
\int_{t}^\infty \Big(\f{t}{t+s}\Big)^m&\exp\Big\{-c\f{4^jr_B^2}{t+s}\Big\} \Big(\f{r_B^2}{t+s}\Big)^{\f{n+2}{4}}\f{ds}{r_B\sqrt{s}}\\
&\leq C\int_t^{\infty} \Big(\f{t}{t+s}\Big)^m\Big(\f{t+s}{4^jr_B^2}\Big)^{m-1}\Big(\f{r_B^2}{t}\Big)^{\f{n+2}{4}}\f{ds}{r_B\sqrt{s}}\\
&\leq C4^{-j(m-1)}\int_t^{\infty} \Big(\f{t}{r_B^2}\Big)^m\f{r_Bds}{s\sqrt{s}}\\
&\leq C 4^{-j(m-1)}.
\end{aligned}
\end{equation*}
This completes our proof.
\end{proof}

We are now ready to give the proof of Theorem \ref{thm4}.

\medskip

\emph{Proof of Theorem \ref{thm4}:}
Firstly, it can be proved that the Riesz transforms $\LA$ and $\VA$ are of weak type $(1,1)$, see [DOY, p. 273]. So, combining Propositions \ref{prop2.1} and \ref{prop2.2} together with Theorem \ref{thm0} and Remark \ref{rem0'} (b), Theorem \ref{thm4} is proved.
\begin{flushright}
    $\Box$
\end{flushright}

\section{Holomorphic functional calculi and spectral multipliers}

 \subsection{Preliminaries on holomorphic functional calculus}
 We now give some preliminary definitions of holomorphic functional
calculi as introduced by A. McIntosh \cite{Mc}.\\

Let $0\leq \omega<\nu<\pi$. We define the closed sector in the
complex plane $\mathbb{C}$
$$
S_\omega=\{z\in \mathbb{C}: |\arg z|\leq \omega\}
$$
and denote the interior of $S_\omega$ by $S^0_\omega $.
We employ the following subspaces of the space $H(S^0_\nu)$ of all
holomorphic functions on $S_\nu^0$:
$$
H_\vc(S_\nu^0)=\{g \in H(S^0_\nu): ||g||_\vc<\vc\},
$$
where $||g||_{\vc}=\sup\{|g(z)|: z\in S_\nu^0\}$, and
$$
\Psi(S^0_\nu)=\{\psi\in H(S^0_\nu): \exists s>0, |\psi(z)|\leq
c|z|^s(1+|z|^{2s+1})^{-1}\}.
$$
Let $0\leq\omega<\pi$. A closed operator $L$ in $L^2(X)$ is said to
be of type $\omega$ if $\si(L) \subset S_\omega$, and for each
$\nu>\omega$ there exists a constant $c_\nu$ such that
$$
||(L-\lambda I)^{-1}||\leq c_\nu|\lambda|^{-1}, \lambda\notin S_\nu.
$$
If $L$ is of type $\omega$ and  $\psi\in \Psi(S^0_\nu)$, we define
$\psi(L) \in \mathcal{L}(L^2, L^2)$ by
$$
\psi(L)=\frac{1}{2\pi i}\int_{\Gamma}(L-\lambda
I)^{-1}\psi(\lambda)d\lambda,
$$
where $\Gamma$ is the contour $\{\xi=re^{\pm i\xi}: r>0\}$
parametrized clockwise around $S_\omega$, and $\omega<\xi<\nu$.
Clearly, this integral is absolutely convergent in $\mathcal{L}(L^2,
L^2)$, and it is straightforward to show, using Cauchy's theorem,
that the definition is independent of the choice of $\xi\in
(\omega,\nu)$. If, in addition, $L$ is one-one and has dense range
and if $\phi \in H_\vc(S^0_\nu)$, then $\phi(L)$ can be defined by $$
\phi(L)=[\psi(L)]^{-1}(\phi\psi)(L),
$$
where $\psi(z)=z(1+z)^{-2}$.

It can be shown that $\phi(L)$ is a
well-defined linear operator in $L^2$. We say that $L$ has a bounded
$H_\vc$ calculus in $L^2$ if there exists $c_{\nu,2} > 0$ such that
$\phi(L) \in \mathcal{L}(L^2, L^2)$, and for $\phi \in H_\vc(S^0_\nu)$,
$$
||\phi(L)||\leq c_{\nu,2}||\phi||_\vc.
$$
In \cite{Mc} it was proved that $L$ has a bounded $H_\vc$-calculus
in $L^2$ if and only if for any non-zero function $\psi\in
\Psi(S^0_\nu)$, $L$ satisfies the square function estimate and its
reverse
\begin{equation}\label{squareestimate}
c_1||f||_2\leq \Big(\int_0^\vc
||\psi_t(L) f ||_2^2\f{dt}{t}\Big)^{1/2}\leq c_2|| f ||_2
\end{equation}
for some $0 < c_1 \leq  c_2 < \vc$, where  $\psi_t(x) =  \psi(tx)$.
Note that different choices of $\nu>\omega$ and $\psi\in
\Psi(S^0_\nu)$ lead to equivalent quadratic norms of $f$. As noted
in \cite{Mc}, positive self-adjoint operators satisfy the quadratic
estimate (\ref{squareestimate}). So do normal operators with spectra
in a sector, and maximal accretive operators. For definitions of
these classes of operators, we refer the reader to \cite{Yo}. For
detailed studies of operators which have bounded holomorphic functional
calculi, see for example \cite{AHS, Mc, Ha, DR}.

\subsection{Application to holomorphic functional calculus}
We first show that the holomorphic functional calculi $g(L)$ satisfy (\ref{cond1}) and (\ref{cond2}).
\begin{prop}\label{holoprop}
Assume that $L$ satisfies $\bf{(H1)}$ and $\bf{(H2)}$. Let $0<\nu<\pi$. Then for any $g\in H_\vc(S_\nu^0)$ $m\in \mathbb{N}$, all closed sets $E, F$ in $X$ with
$d(E,F)>0$ and any $f\in L^2(X)$ supported in $E$, one has
\begin{equation}\label{ho5.1}
\begin{aligned}
||g(L)&(I-e^{-tL})^mf||_{L^2(F)}\\
&\leq
C\max\Big\{\Big(\f{t}{d(E,F)^2}\Big)^{m-1},\Big(\f{t}{d(E,F)^2}\Big)^{m+1}\Big\}||f||_{L^2(E)}||g||_{\infty}, \ \forall t>0,
\end{aligned}
\end{equation}
and
\begin{equation}\label{ho5.2}
\begin{aligned}
||g(L)&(tLe^{-tL})^mf||_{L^2(F)}\\
&\leq
C\max\Big\{\Big(\f{t}{d(E,F)^2}\Big)^{m-1},\Big(\f{t}{d(E,F)^2}\Big)^{m+1}\Big\}||f||_{L^2(E)}||g||_{\infty}, \ \forall t>0.
\end{aligned}
\end{equation}
\end{prop}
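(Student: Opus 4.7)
My plan is to derive both (\ref{ho5.1}) and (\ref{ho5.2}) by combining McIntosh's holomorphic functional calculus representation of $g(L)F(L)$ as a Cauchy integral with off-diagonal $L^2$ bounds for the resolvent $(L-\lambda)^{-1}$ drawn from the Davies-Gaffney hypothesis \textbf{(H2)}. Since $L$ is non-negative self-adjoint (type $0$), for any $\xi\in(0,\nu)$ I can write
\[
g(L)F(L) \;=\; \frac{1}{2\pi i}\int_{\Gamma_\xi} g(\lambda)F(\lambda)(L-\lambda)^{-1}\,d\lambda
\]
on the contour $\Gamma_\xi=\partial S_\xi$, whenever $gF$ decays sufficiently on $\Gamma_\xi$ for absolute convergence in $\mathcal{L}(L^2)$. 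The resolvent off-diagonal estimate I will use, namely
\[
\|(L-\lambda)^{-1}f\|_{L^2(F)} \;\leq\; \frac{C_\xi}{|\lambda|}\exp\bigl(-c_\xi\sqrt{|\lambda|}\,d(E,F)\bigr)\|f\|_{L^2(E)}, \quad \lambda\in\Gamma_\xi,
\]
follows from the Laplace representation $(L-\lambda)^{-1}=\int_0^\infty e^{\lambda s}e^{-sL}\,ds$ (valid on $\{\operatorname{Re}\lambda<0\}$ and transported to $\Gamma_\xi$ by deforming the contour using the analyticity of the resolvent on $\mathbb{C}\setminus[0,\infty)$), \textbf{(H2)}, and the inequality $\alpha s+\beta/s\geq 2\sqrt{\alpha\beta}$.

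For (\ref{ho5.2}) I choose $\xi<\pi/2$ so that $|e^{-t\lambda}|=e^{-tr\cos\xi}$ decays on $\Gamma_\xi$; with $F(z)=(tze^{-tz})^m$ the factor $(t\lambda)^m$ tames the resolvent singularity at $\lambda=0$ and $e^{-t\lambda}$ supplies absolute convergence at $\infty$. Parametrizing $\lambda=re^{\pm i\xi}$, substituting the resolvent bound, and then setting $u=\sqrt{r}\,d$ (with $d=d(E,F)$) reduces the claim to
\[
(t/d^2)^m\int_0^\infty u^{2m-1}\exp\bigl(-c(t/d^2)u^2-cu\bigr)\,du \;\leq\; C\max\bigl\{(t/d^2)^{m-1},(t/d^2)^{m+1}\bigr\}.
\]
When $t\leq d^2$ the first exponential is $O(1)$ and the integral is dominated by $\int_0^\infty u^{2m-1}e^{-cu}\,du<\infty$, yielding $(t/d^2)^m\leq(t/d^2)^{m-1}$; when $t>d^2$ the Gaussian factor dominates and $\int_0^\infty u^{2m-1}e^{-\alpha u^2}\,du\approx \alpha^{-m}$ with $\alpha=ct/d^2$ gives a product of order $1\leq(t/d^2)^{m+1}$.

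For (\ref{ho5.1}) the function $(1-e^{-t\lambda})^m$ is bounded but fails to decay at infinity on $\Gamma_\xi$, so the direct Cauchy integral diverges. I will instead use the semigroup identity
\[
(I-e^{-tL})^m = \int_{[0,t]^m} L^m e^{-(s_1+\cdots+s_m)L}\,ds_1\cdots ds_m
\]
and split $g(L)L^m e^{-sL}=s^{-m}\,g(L)(sL)^m e^{-sL}$ with $s=s_1+\cdots+s_m$; each factor $g(L)(sL)^m e^{-sL}$ falls under the previous case (replacing $t$ by $s$ and $(tze^{-tz})^m$ by $(sz)^m e^{-sz}$) and obeys the sharper bound $C\|g\|_\infty\min\{(s/d^2)^m,1\}\|f\|_{L^2(E)}$. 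Integrating $s^{-m}\min\{(s/d^2)^m,1\}$ over $[0,t]^m$, separated into the regions $s\leq d(E,F)^2$ and $s>d(E,F)^2$, then produces the required $\max\{(t/d^2)^{m-1},(t/d^2)^{m+1}\}$ control. The main technical obstacle is the analytic-continuation step for the resolvent estimate: the Laplace formula is transparent only for $\operatorname{Re}\lambda<0$, and transporting the bound to $\Gamma_\xi$ with $\xi<\pi/2$ while keeping the constants $C_\xi,c_\xi$ uniform requires a careful deformation of the contour, using the analyticity of $(L-\lambda)^{-1}$ together with \textbf{(H2)}-type bounds on $e^{-sL}$ along the deformed paths.
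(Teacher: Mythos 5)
Your argument is correct and arrives at the stated bounds, but it is organized rather differently from the paper's proof, and the comparison is worth recording. The paper regularizes $g$ by the factor $z^s(1+z)^{-2s}$, passes to the limit $s\to 0$ via McIntosh's Convergence Lemma, and estimates a double contour integral in which the resolvent is written as a rotated Laplace transform $\int_\Gamma e^{\lambda z}e^{-zL}\,dz$ of the semigroup, the factor $(1-e^{-t\lambda})^m$ being controlled directly by $(t|\lambda|)^m$; both (\ref{ho5.1}) and (\ref{ho5.2}) are treated this way. You instead (i) extract a self-contained off-diagonal resolvent bound $\|(L-\lambda)^{-1}\|_{L^2(E)\to L^2(F)}\leq C|\lambda|^{-1}\exp(-c\sqrt{|\lambda|}\,d(E,F))$ and run a single contour integral for $(tLe^{-tL})^m$, and (ii) dispose of $(I-e^{-tL})^m$ through the exact identity $(I-e^{-tL})^m=\int_{[0,t]^m}L^me^{-(s_1+\cdots+s_m)L}\,ds$, reducing it to case (i). This is more modular, avoids the regularization-and-limit step entirely, and in fact yields for (\ref{ho5.2}) the sharper bound $C\min\{(t/d(E,F)^2)^m,1\}$ (compare the paper's remark on (\ref{ho5.3})). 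Two points in your write-up need to be made precise. First, the resolvent estimate is not obtained by ``analytically continuing the inequality'': for $\lambda\in\Gamma_\xi$ with $0<\xi<\pi/2$ the integral $\int_0^\infty e^{\lambda s}e^{-sL}\,ds$ diverges, and the correct device is to rotate the time-integration ray to $e^{i\beta}\mathbb{R}_+$ with $\beta<\pi/2$ and $\xi+\beta>\pi/2$ and then invoke the Davies--Gaffney estimate for \emph{complex} times; that complex-time estimate is exactly the paper's Lemma \ref{holem} (proved from $\bf{(H1)}$--$\bf{(H2)}$ by a Phragm\'en--Lindel\"of argument as in \cite{HLMMY}), and it is the one genuinely nontrivial ingredient you flag but do not supply. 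Second, in step (ii) the integral $\int_{[0,t]^m}\min\{d(E,F)^{-2m},(s_1+\cdots+s_m)^{-m}\}\,ds$ produces an extra factor of order $1+\log_+(t/d(E,F)^2)$ when $t>d(E,F)^2$; this is harmlessly absorbed into $(t/d(E,F)^2)^{m+1}$, but you should carry out that computation rather than assert it. Finally, note that the identity $g(L)F(L)=(gF)(L)$ and the freedom to take the contour angle $\xi<\min\{\nu,\pi/2\}$ are available because $L$ is non-negative self-adjoint, hence of type $0$ with a bounded Borel functional calculus.
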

Before giving the proof of Proposition \ref{holoprop}, we state the following lemma.
\begin{lem}\label{holem}
Assume that $L$ satisfies conditions $\bf{(H1)}$ and $\bf{(H2)}$. Then for any $z=re^{i\theta}$ with $\theta \in (-\pi/2,\pi/2)$, all closed sets $E, F$ in $X$ with $d(E,F)>0$ and any $f\in L^2(X)$ supported in $E$, one has
$$
\Big\|e^{-zL}f\Big\|_{L^2(F)}\leq C \exp\Big\{-\f{d(E,F)^2}{|z|}\cos\theta \Big\}||f||_{L^2(E)}.
$$
\end{lem}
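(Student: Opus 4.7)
My plan is to upgrade the real-time Davies--Gaffney estimate $\mathbf{(H2)}$ to complex times by a Phragm\'en--Lindel\"of argument. By the $L^2$ duality identity $\|e^{-zL}f\|_{L^2(F)}=\sup|\langle e^{-zL}f,g\rangle|$, where the supremum runs over $g\in L^2(X)$ supported in $F$ with $\|g\|_{L^2}\le 1$, it suffices to establish the bilinear estimate
$$
|\langle e^{-zL}f_1,f_2\rangle|\leq C\exp\Big(-\f{d(E,F)^2\cos\theta}{c|z|}\Big)\|f_1\|_{L^2(E)}\|f_2\|_{L^2(F)}
$$
for all $f_1,f_2\in L^2(X)$ supported in $E,F$ respectively. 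Set $H(z):=\langle e^{-zL}f_1,f_2\rangle$, holomorphic on the open right half-plane $\Pi:=\{\Re z>0\}$. Since $L$ is self-adjoint and nonnegative, the spectral theorem gives $\|e^{-zL}\|_{L^2\to L^2}\le 1$ for all $\Re z\ge 0$, so $|H(z)|\le\|f_1\|_{L^2}\|f_2\|_{L^2}$ on $\overline\Pi$; along the positive real axis $\mathbf{(H2)}$ yields $|H(t)|\le C\exp(-d(E,F)^2/(ct))\|f_1\|_{L^2(E)}\|f_2\|_{L^2(F)}$.

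Next I would introduce the auxiliary function $\Phi(z):=\exp(d(E,F)^2/(cz))\,H(z)$, holomorphic on $\Pi$. Writing $z=re^{i\theta}$, the weight has modulus $\exp(d^2\cos\theta/(c|z|))$, so proving $|\Phi(z)|\leq C\|f_1\|_{L^2}\|f_2\|_{L^2}$ on $\Pi$ is equivalent to the target bound on $H$. The weight is unimodular on the imaginary axis, giving $|\Phi(iy)|\le\|f_1\|_{L^2}\|f_2\|_{L^2}$; along the positive real axis the weight exactly cancels the Davies--Gaffney Gaussian, giving $|\Phi(t)|\le C\|f_1\|_{L^2}\|f_2\|_{L^2}$.

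The main obstacle is that $\Phi$ carries an essential singularity at $z=0$: the crude bound $|\Phi(z)|\le\exp(d^2\cos\theta/(c|z|))\|f_1\|_{L^2}\|f_2\|_{L^2}$ blows up near the origin in every non-imaginary direction, so neither the maximum principle nor the standard half-plane Phragm\'en--Lindel\"of theorem applies directly on $\Pi$. I would circumvent this by splitting $\Pi$ along the positive real axis into the two quadrants $Q_\pm:=\{z\in\Pi:\pm\arg z\in(0,\pi/2)\}$ and passing to the inverted variable $w=1/z$ on each. The map $w=1/z$ sends $Q_+$ biholomorphically onto the opposite quadrant in the $w$-plane and exchanges $z=0$ with $z=\infty$, so the essential singularity becomes mere exponential growth at $w=\infty$; the boundary rays $\{t>0\}$ and $\{iy:y>0\}$ of $Q_+$ map onto the rays $\{s>0\}$ and $\{-is':s'>0\}$ of the image. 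Writing $\widetilde\Phi(w):=\Phi(1/w)$ and using $\cos\theta/|z|=\Re w$ when $z=1/w$, the interior bound transforms into $|\widetilde\Phi(w)|\le\exp(d^2\Re w/c)\|f_1\|_{L^2}\|f_2\|_{L^2}\le\exp(d^2|w|/c)\|f_1\|_{L^2}\|f_2\|_{L^2}$, i.e.\ growth of order at most $1$ on a sector of opening $\pi/2$. Since the critical order for Phragm\'en--Lindel\"of on a sector of opening $\pi/2$ is $\pi/(\pi/2)=2$, the classical theorem applies and, combined with the boundary bounds above transported through the inversion, yields $|\widetilde\Phi(w)|\le C\|f_1\|_{L^2}\|f_2\|_{L^2}$ throughout the image quadrant, i.e.\ $|\Phi(z)|\le C\|f_1\|_{L^2}\|f_2\|_{L^2}$ on $Q_+$. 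The symmetric argument on $Q_-$, together with the bound on the positive real axis, yields $|\Phi(z)|\le C\|f_1\|_{L^2}\|f_2\|_{L^2}$ on all of $\Pi$; unwinding the definition of $\Phi$ and taking the supremum over $g=f_2$ produces the claimed inequality.
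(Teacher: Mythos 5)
Your proposal is correct and follows essentially the same route as the paper, which simply defers to Proposition 3.1 of \cite{HLMMY}: the standard Phragm\'en--Lindel\"of upgrade of the real-time Davies--Gaffney estimate to complex time, carried out exactly as you describe (dualize, weight $H(z)=\langle e^{-zL}f_1,f_2\rangle$ by $\exp(d(E,F)^2/(cz))$, and invert each quadrant to convert the essential singularity at the origin into order-one growth at infinity, where the sector of opening $\pi/2$ has critical order $2$). Note only that the bound you actually obtain, $\exp(-d(E,F)^2\cos\theta/(c|z|))$ with the constant $c$ inherited from $\mathbf{(H2)}$, is the correct and sufficient form; the lemma's statement omits the $c$, which appears to be a typo.
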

The proof is similar to the proof of Proposition 3.1 of \cite{HLMMY} and hence we omit details here.\\
\emph{Proof of Proposition \ref{holoprop}:} We prove the estimate (\ref{ho5.1}) first. We define
$$g_{s,t}(z)=z^s(1+z)^{-2s}g(z)(1-e^{-tz})^m .$$
 Then $g_{s,t}\in \Psi(S^0_\nu)$. Moreover, $\lim\limits_{s\rightarrow 0}g_{s,t}(z)=g(z)(1-e^{-tz})^m$ uniformly in any compact set in $S^0_\nu$. Therefore, by the Convergence Theorem in \cite{Mc},
\begin{equation}\label{convergenceofHFC}
\lim\limits_{s\rightarrow 0}g_{s,t}(L)f=g(L)(1-e^{-tL})^mf
\end{equation}
in $L^2(X)$ norm for every $f\in L^2(X)$.\\
Choose $0<\theta<\mu<\nu$ and $\mu<\f{\pi}{2}$. On one hand, we have
$$
g_{s,t}(L) = \f{1}{2\pi i}\int_{\gamma}(L-\lambda I)^{-1}\f{\lambda^s}{(1+\lambda)^{2s}}g(\lambda)(1-e^{-t\lambda})^md\lambda,
$$
here $\gamma = \gamma_+ + \gamma_-$, where $\gamma_+(t)=te^{i\mu}$ if $0\leq t<\infty$ and $\gamma_-(t)=-te^{-i\mu}$ if $0\leq t<\infty$ with $0<\mu<\nu$. On the other hand, we have
$$
(L-\lambda I)^{-1}= \int_{\Gamma}e^{\lambda z}e^{-zL}dz,
$$
here $\Gamma = \Gamma_+ + \Gamma_-$, where $\Gamma_+(t)=te^{i\beta}$ if $0\leq t<\infty$ and $\gamma_-(t)=-te^{-i\beta}$ if $0\leq t<\infty$ with $\beta = \f{\pi - \theta}{2}$.
Therefore
\begin{equation*}
\begin{aligned}
||g_{s,t}(L)f||_{L^2(F)}\leq &\Big\|\int_{\Gamma_+}e^{-zL}f\int_{\gamma_+}\f{\lambda^s}{(1+\lambda)^{2s}}g(\lambda)e^{\lambda z}(1-e^{-t\lambda})^md\lambda dz\Big\|_{L^2(F)}\\
&~~+ \Big\|\int_{\Gamma_-}e^{-zL}f\int_{\gamma_-}\f{\lambda^s}{(1+\lambda)^{-2s}}g(\lambda)e^{\lambda z}(1-e^{-t\lambda})^md\lambda dz\Big\|_{L^2(F)}\\
&=I_1+I_2.
\end{aligned}
\end{equation*}
Let us estimate $I_1$ first. Observe that, by Lemma \ref{holem},
\begin{equation*}
\begin{aligned}
I_1&\leq C  \int_{\Gamma_+}\Big\|e^{-zL}f\Big\|_{L^2(F)}\int_{\gamma_+}\f{\lambda^s}{(1+\lambda)^{2s}}g(\lambda)e^{\lambda z}(1-e^{-t\lambda})^md\lambda dz\\
&\leq C \|f\|_{L^2(E)}\int_{0}^{\infty}\exp\Big\{-c\f{d(E,F)^2}{|z|}\cos\beta\Big\}\int_{0}^{\infty}||g(\lambda)||_{\infty}|e^{\lambda z}(1-e^{-t\lambda})^m|d|\lambda| d|z|.\\
\end{aligned}
\end{equation*}
Since $\mu<\pi/2$, we obtain $|(1-e^{-t\lambda})^m|\leq c (t|\lambda|)^m$. Hence,
\begin{equation*}
\begin{aligned}
I_1&\leq C  \|f\|_{L^2(E)}||g(\lambda)||_{\infty}\int_{0}^{\infty}\exp\Big\{-c\f{d(E,F)^2}{|z|}\Big\}\int_{0}^{\infty}e^{-c_1|\lambda| |z|}(t|\lambda|)^md|\lambda| d|z|\\
&\leq C  \|f\|_{L^2(E)}||g(\lambda)||_{\infty}\int_{0}^{\infty}\exp\Big\{-c\f{d(E,F)^2}{|z|}\Big\}\f{t^m}{|z|^{m+1}} d|z|\\
&\leq C  \|f\|_{L^2(E)}||g(\lambda)||_{\infty}\Big(\int_{0}^{t}\ldots + \int_{t}^{\infty}\ldots\Big)=C  \|f\|_{L^2(E)}||g(\lambda)||_{\infty}\Big(I_{11} + I_{12}\Big).\\
\end{aligned}
\end{equation*}
Concerning the term $I_{11}$, we have
\begin{equation*}
\begin{aligned}
I_{11}&=\int_{0}^{t}\exp\Big\{-c\f{d(E,F)^2}{|z|}\Big\}\f{t^m}{|z|^{m+1}} d|z|\\
&\leq \int_{0}^{t} \Big(\f{|z|}{d(E,F)^2}\Big)^{m+1}\f{t^m}{|z|^{m+1}} d|z|=\Big(\f{t}{d(E,F)^2}\Big)^{m+1}.
\end{aligned}
\end{equation*}
Concerning the term $I_{11}$, we have
\begin{equation*}
\begin{aligned}
I_{12}&=\int_{t}^{\infty}\exp\Big\{-c\f{d(E,F)^2}{|z|}\Big\}\f{t^m}{|z|^{m+1}} d|z|\\
&\leq \int_{t}^{\infty} \Big(\f{|z|}{d(E,F)^2}\Big)^{m-1}\f{t^m}{|z|^{m+1}} d|z|=\Big(\f{t}{d(E,F)^2}\Big)^{m-1}.
\end{aligned}
\end{equation*}
The term $I_2$ can be treated by the same way. These estimates together with (\ref{convergenceofHFC}) give (\ref{ho5.1}).\\
We proceed to prove (\ref{ho5.2}). Repeating the arguments above, we obtain
\begin{equation*}
\begin{aligned}
||g(L)(tLe^{-tL})^mf||_{L^2(F)}\leq &\Big\|\int_{\Gamma_+}e^{-zL}f\int_{\gamma_+}g(\lambda)e^{\lambda z}(t\lambda e^{-t\lambda })^md\lambda dz\Big\|_{L^2(F)}\\
&~~+ \Big\|\int_{\Gamma_-}e^{-zL}f\int_{\gamma_-}g(\lambda)e^{\lambda z}(t\lambda e^{-t\lambda })^md\lambda dz\Big\|_{L^2(F)}\\
&=II_1+II_2.
\end{aligned}
\end{equation*}
We need only estimate $II_1$. The estimate for   $II_2$ is proved similarly. One has,
\begin{equation*}
\begin{aligned}
II_1&\leq C  \int_{\Gamma_+}\Big\|e^{-zL}f\Big\|_{L^2(F)}\int_{\gamma_+}g(\lambda)e^{\lambda z}(t\lambda e^{-t\lambda })^m d\lambda dz\\
&\leq C \|f\|_{L^2(E)}\int_{0}^{\infty}\exp\Big\{-c\f{d(E,F)^2}{|z|}\cos\beta\Big\}\int_{0}^{\infty}||g(\lambda)||_{\infty}|e^{\lambda z}(t\lambda e^{-t\lambda })^m|d|\lambda| d|z|\\
&\leq C \|f\|_{L^2(E)}\int_{0}^{\infty}\exp\Big\{-c\f{d(E,F)^2}{|z|}\Big\}\int_{0}^{\infty}||g(\lambda)||_{\infty}e^{-|\lambda| (c_1|z|+c_2t)}(t|\lambda| )^md|\lambda| d|z|\\
&\leq C \|f\|_{L^2(E)}||g(\lambda)||_{\infty}\int_{0}^{\infty}\exp\Big\{-c\f{d(E,F)^2}{|z|}\Big\}\f{t^m}{(c_1|z|+c_2t)^{m+1}} d|z|\\
&\leq C \|f\|_{L^2(E)}||g(\lambda)||_{\infty}\Big(\int_{0}^{t}\ldots + \int_{t}^{\infty}\ldots\Big)=C \|f\|_{L^2(E)}||g(\lambda)||_{\infty}\Big(II_{11} + II_{12}\Big)\\
\end{aligned}
\end{equation*}
For the term $II_{11}$,
\begin{equation*}
\begin{aligned}
II_{11}&=\int_{0}^{t}\exp\Big\{-c\f{d(E,F)^2}{|z|}\Big\}\f{t^m}{(c_1|z|+c_2t)^{m+1}} d|z|\\
&\leq C\int_{0}^{t}\Big(\f{|z|}{d(E,F)^2}\Big)^m\f{t^m}{|z|^{m+1}} d|z|=C\Big(\f{t}{d(E,F)^2}\Big)^{m+1}.\\
\end{aligned}
\end{equation*}
The remaining term $II_{12}$ is dominated by
\begin{equation*}
\begin{aligned}
C\int_{0}^{\infty}\Big(\f{|z|}{d(E,F)^2}\Big)^{m-1}\f{t^m}{z^{m+1}} d|z|=C\Big(\f{t}{d(E,F)^2}\Big)^{m-1}.\\
\end{aligned}
\end{equation*}
This completes our proof.
\begin{flushright}
    $\Box$
\end{flushright}

Note that from Proposition \ref{holoprop}, $g(L)$ satisfies (\ref{cond1}) and (\ref{cond2}) with $m$ being
replaced by $m -1$. Moreover, if the Gaussian upper bound condition $\bf{(H3)}$ is satisfied  then $g(L)$ is of weak type $(1,1)$, see \cite{DM}. Hence we obtain the following result.

\begin{thm}\label{thm HFC}
Assume that $L$ satisfies conditions $\bf{(H1)}$ and $\bf{(H2)}$. Let $g \in H_\vc(S_\nu^0)$. Then $g(L)$ is bounded from $H^p_L(X)$ to $L^p(X)$ for all $0<p\leq 1$. Moreover, if the Gaussian upper bound condition $\bf{(H3)}$ is satisfied, then the commutator of a BMO function $b$ and $g(L)$ is bounded from $H^1_L(X)$ to $L^{1,\infty}(X)$.
\end{thm}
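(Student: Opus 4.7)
The plan is to reduce Theorem \ref{thm HFC} directly to Theorem \ref{thm0} via the already-established Proposition \ref{holoprop}. First I note that $L^2$-boundedness of $g(L)$ with $\|g(L)\|_{L^2\to L^2}\le \|g\|_\infty$ is automatic: since $\bf{(H1)}$ makes $L$ non-negative self-adjoint, the spectral theorem gives a bounded $H_\infty$ functional calculus for $g\in H_\infty(S_\nu^0)$.

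Next I will verify conditions $(\ref{cond1})$ and $(\ref{cond2})$ of Remark \ref{rem0'} for $T = g(L)$. Given a ball $B$ with radius $r_B$ and $j\ge 2$, I take $E=B$, $F=S_j(B)$, and $t=r_B^2$ in Proposition \ref{holoprop}, noting that $d(E,F)\ge 2^{j-1}r_B$, so $t/d(E,F)^2 \le 4^{-(j-1)}\le 1$. Consequently the maximum in $(\ref{ho5.1})$ and $(\ref{ho5.2})$ is realized by the $(t/d(E,F)^2)^{m-1}$ term, giving
\begin{equation*}
\Big(\int_{S_j(B)}|g(L)(I-e^{-r_B^2L})^m f|^2\,d\mu\Big)^{1/2}\le C \|g\|_\infty 2^{-2j(m-1)}\|f\|_{L^2(B)},
\end{equation*}
and likewise for the $(r_B^2 L e^{-kr_B^2L})^m$ version. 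Thus $(\ref{cond1})$ and $(\ref{cond2})$ hold with $2^{-2jm}$ replaced by $2^{-2j(m-1)}$, which is exactly the relaxation allowed by Remark \ref{rem0'}(b) provided $m-1 > \frac{n(2-p)}{4p}$. Since the integer $m$ parameterizing the atomic decomposition can be chosen as large as we wish, we fix $m$ with $m-1>\frac{n(2-p)}{4p}$, and Theorem \ref{thm0}(i) yields $g(L):H^p_L(X)\to L^p(X)$ for the given $p\in(0,1]$.

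For the commutator statement in part (ii), I need the additional weak-type $(1,1)$ bound for $g(L)$; under the Gaussian upper bound $\bf{(H3)}$ this is precisely the main result of \cite{DM}, which applies because $g(L)$ arises from the holomorphic functional calculus of $L$ and $\bf{(H3)}$ provides the requisite kernel regularity to verify the non-smooth Calder\'on-Zygmund criterion of \cite{DM}. Choosing $p=1$ and $m$ with $m-1 > n/4$, the hypotheses of Theorem \ref{thm0}(ii) are satisfied and we obtain $[b,g(L)]:H^1_L(X)\to L^{1,\infty}(X)$ for every $b\in\mathrm{BMO}$.

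The proof is therefore essentially a packaging result: all the analytic work has been concentrated in Proposition \ref{holoprop} and in the abstract Theorem \ref{thm0}. The only step requiring attention is confirming that the two-sided maximum in $(\ref{ho5.1})$--$(\ref{ho5.2})$ collapses to the desired off-diagonal decay at the scale $t=r_B^2$; this is what forces the loss of one power (decay $2^{-2j(m-1)}$ rather than $2^{-2jm}$) and makes it essential to invoke the slightly more flexible form of the atomic criterion noted in Remark \ref{rem0'}(b). No further obstacle is anticipated.
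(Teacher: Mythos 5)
Your proof is correct and follows essentially the same route as the paper: apply Proposition \ref{holoprop} with $t=r_B^2$, $E=B$, $F=S_j(B)$ to get $(\ref{cond1})$--$(\ref{cond2})$ with decay $2^{-2j(m-1)}$, invoke the relaxation in Remark \ref{rem0'}(b), and cite \cite{DM} for the weak $(1,1)$ bound under $\bf{(H3)}$ before applying Theorem \ref{thm0}. The only detail you glossed over is that $(\ref{cond2})$ involves $r_B^{2m}L^me^{-kr_B^2 L}=(r_B^2Le^{-(k/m)r_B^2L})^m$ for $k=1,\dots,m$, so one applies $(\ref{ho5.2})$ at the slightly different time scale $t=(k/m)r_B^2$, which changes nothing since these scales are comparable to $r_B^2$ uniformly for $1\le k\le m$.
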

\begin{rem}
We can obtain  the following estimate which is sharper than (\ref{ho5.2}):
\begin{equation}\label{ho5.3}
\begin{aligned}
||g(L)&(tLe^{-tL})^mf||_{L^2(F)}\leq C\Big(\f{t}{d(E,F)^2}\Big)^{m}||f||_{L^2(E)}||g||_{\infty}, \ \forall t>0.
\end{aligned}
\end{equation}
See for example \cite{AL}. We remark that the estimate (\ref{ho5.3}) implies the boundedness
for the holomorphic functional calculus $g(L)$ from $H^p_L(X)$ to $H^p_L(X)$ and hence $g(L)$ is bounded from
$H^p_L(X)$ to $L^p(X)$, see \cite{AL, DL}.  In this section, we obtain the $H^p_L-L^p$ boundedness of $g(L)$ and
our main result is the endpoint estimate of the commutator $[b, g(L)]$ where $b$ is a BMO function.
\end{rem}

\subsection{Application to spectral multipliers}
Assume that $L$ satisfies conditions $\bf{(H1)}$. Let
$$
F(L) =\int_0^\infty F(\lambda) dE_L(\lambda)
$$
be the spectral multiplier $F(L)$ defined on $L^2$ by using the spectral resolution of $L$.\\
Our main result on spectral multipliers is  the following.
\begin{prop}\label{specmultiplierpro}
Assume that $L$ satisfies conditions $\bf{(H1)}$ and $\bf{(H2)}$.
Let $F$ be a bounded function defined on $(0,\infty)$ such that for some real number $\alpha>\f{n(2-p)}{2p}+\f{1}{2}$ and any non-zero function $\eta\in C_c^\infty(\f{1}{2},2)$ there exists a constant $C_\eta$ such that
\begin{equation}\label{specequa}
\sup_{t>0}\|\eta(\cdot)F(t\cdot)\|_{W^{2,\alpha}(\mathbb{R})}\leq C_\eta
\end{equation}
where $\|F\|_{W^{q,\alpha}(\mathbb{R})}=\|(I-d^2/dx^2)^{\alpha/2}F\|_{L^q}$.
Then the multiplier operator satisfies the following estimate
\begin{equation}\label{eq1 spectral}
\Big(\int_{S_j(B)}  |F(\sqrt{L}) a|^2
d\mu\Big)^{\frac{1}{2}}\leq C2^{-j\delta}V(B)^{\f{1}{2}-\f{1}{p}}
\end{equation}
for some $\delta>\f{n(2-p)}{4p}$, for any $(p,2,m)$-atom $a$ supported in $B$ and sufficiently large $m$.
\end{prop}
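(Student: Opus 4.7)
The plan is to verify (\ref{eq1 spectral}) within the framework of Theorem \ref{thm0} via Remark \ref{rem0'}. By Remark \ref{rem0'}(a), writing $a = L^m b$ and expanding $I - (I - e^{-r_B^2 L})^m = \sum_{k=1}^m c_k e^{-k r_B^2 L}$, one reduces the task to establishing, for $f$ supported in $B$,
\[
\|F(\sqrt{L})\,\phi_{r_B}(L) f\|_{L^2(S_j(B))} \leq C\,2^{-j\delta}\|f\|_{L^2(B)},
\]
with some $\delta > n(2-p)/(4p)$, where $\phi_{r_B}(L)$ stands for either $(I - e^{-r_B^2 L})^m$ or $(r_B^2 L e^{-k r_B^2 L})^m$; the decay $2^{-j\delta}$ in place of $2^{-2jm}$ is permitted by Remark \ref{rem0'}(b). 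Since both $a$ and $r_B^{-2m} b$ have $L^2(B)$-norm bounded by $V(B)^{1/2-1/p}$, this reduction suffices.

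The main technical tool is a dyadic decomposition in the spectral parameter combined with an off-diagonal $L^2$ estimate in the spirit of Duong--Ouhabaz--Sikora. Fix $\eta_0 \in C_c^\infty((1/2, 2))$ with $\sum_{\ell \in \mathbb{Z}} \eta_0(2^{-\ell}\lambda) = 1$ for $\lambda > 0$, and write $F = \sum_\ell F_\ell$ with $F_\ell(\lambda) = \eta_0(2^{-\ell}\lambda) F(\lambda)$ supported in $[2^{\ell-1}, 2^{\ell+1}]$; by (\ref{specequa}), the rescaled symbols $F_\ell(2^\ell\cdot)$ have $W^{2,\alpha}$-norm uniformly bounded in $\ell$. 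The key estimate to establish is the weighted off-diagonal bound
\[
\|F_\ell(\sqrt{L}) f\|_{L^2(F)} \leq C\,(1 + 2^\ell\,d(E, F))^{1/2 - \alpha}\|f\|_{L^2(E)}
\]
whenever $f$ is supported in a closed set $E$. To prove this, I absorb a heat factor: factor $F_\ell(\sqrt{L}) = \widetilde{F}_\ell(\sqrt{L})\,e^{-2^{-2\ell}L}$ with $\widetilde{F}_\ell(\lambda) = F_\ell(\lambda)\,e^{2^{-2\ell}\lambda^2}$, and observe that since the heat factor is smooth and bounded on $\supp F_\ell \subset [2^{\ell-1},2^{\ell+1}]$, the rescaled symbol $\widetilde{F}_\ell(2^\ell\cdot)$ still enjoys a uniform $W^{2,\alpha}$-bound. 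Then the Davies--Gaffney estimate $\bf{(H2)}$ applied to $e^{-2^{-2\ell} L}$ combines with a Plancherel-type kernel estimate for $\widetilde{F}_\ell(\sqrt{L})$ (via Fourier inversion and Sobolev embedding on $\mathbb{R}$) to yield the desired spatial decay.

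Next, incorporating the regularizer $\phi_{r_B}$ into the multiplier and noting that $|\phi_{r_B}(\lambda^2)| \leq C\min(1, (r_B\lambda)^{2m})$ on $\supp F_\ell$, the product $\phi_{r_B}(\cdot^2) F_\ell(2^\ell\cdot)$ obeys a Sobolev bound with an extra factor of $\min(1,(2^\ell r_B)^{2m})$. Applying the off-diagonal estimate with $E = B$ and $F = S_j(B)$, so that $d(B, S_j(B)) \sim 2^j r_B$, gives
\[
\|F_\ell(\sqrt{L})\phi_{r_B}(L) f\|_{L^2(S_j(B))} \leq C\,\min\bigl(1,(2^\ell r_B)^{2m}\bigr)\,(1 + 2^{\ell+j} r_B)^{1/2 - \alpha}\|f\|_{L^2(B)}.
\]
Summing over $\ell$ by splitting at $2^\ell r_B \sim 1$ (choosing $m > \alpha$ so that the low-frequency sum converges geometrically, while the high-frequency sum converges since $\alpha > 1/2$) produces the required bound $2^{-j(\alpha - 1/2)}\|f\|_{L^2(B)}$. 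Setting $\delta := \alpha - 1/2$, the hypothesis $\alpha > n(2-p)/(2p) + 1/2$ yields $\delta > n(2-p)/(2p) > n(2-p)/(4p)$, as needed. The main obstacle is the off-diagonal estimate in the second paragraph: because only the Davies--Gaffney estimate (not a Gaussian upper bound) is assumed, there is no finite propagation speed at our disposal, and the trade-off between spectral smoothness (measured in $W^{2,\alpha}$) and spatial off-diagonal decay must be executed through the heat-factor absorption, with careful bookkeeping of the $\ell$-dependence determining the sharp threshold on $\alpha$.
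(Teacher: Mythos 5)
Your overall architecture is sound and, in fact, your claimed off-diagonal estimate $\|F_\ell(\sqrt{L})f\|_{L^2(F)}\leq C(1+2^\ell d(E,F))^{1/2-\alpha}\|f\|_{L^2(E)}$ is precisely the content (in off-diagonal form) of the weighted estimate the paper imports as Lemma \ref{spmultlem} from \cite{DP}; your bookkeeping of the regularizer, the summation over $\ell$ split at $2^\ell r_B\sim 1$, and the conclusion $\delta=\alpha-1/2>\f{n(2-p)}{2p}>\f{n(2-p)}{4p}$ are all correct. (The paper itself uses a slightly different decomposition: rather than the heat-semigroup regularizers of Remark \ref{rem0'}, it splits the frequency sum at $\ell_0=-\log_2 r_B$ and, for low frequencies, moves $L^m$ onto the multiplier using $a=L^mb$ and $\|b\|_{L^2}\leq r_B^{2m}V(B)^{1/2-1/p}$; this achieves the same damping $(2^\ell r_B)^{2m}$ that you obtain from $\phi_{r_B}$.)

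The genuine gap is in your proof of the key off-diagonal estimate. Factoring $F_\ell(\sqrt{L})=\widetilde{F}_\ell(\sqrt{L})\,e^{-2^{-2\ell}L}$ and applying Davies--Gaffney to the (real-time) heat factor buys you nothing: $\widetilde{F}_\ell(\sqrt{L})$ is merely $L^2$-bounded and does not preserve supports, so composing it with an operator enjoying $L^2$ off-diagonal decay yields no spatial decay for the product, and under $\bf{(H2)}$ alone there is no pointwise kernel for either factor to which a ``Plancherel-type'' estimate could be applied. Moreover, your closing remark that ``there is no finite propagation speed at our disposal'' is exactly backwards: the Davies--Gaffney estimate is \emph{equivalent} to finite propagation speed for the wave group $\cos(t\sqrt{L})$ (Sikora; Coulhon--Sikora), and that is the mechanism behind the needed bound. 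Writing the (even extension of the) rescaled symbol via Fourier inversion as an integral of $\cos(\tau 2^{-\ell}\sqrt{L})$, finite propagation speed localizes each wave operator to distance $|\tau|2^{-\ell}$, and Cauchy--Schwarz against the weight $(1+|\tau|)^{2\alpha}$ produces the factor $(1+2^\ell d(E,F))^{1/2-\alpha}$ from the $W^{2,\alpha}$ norm; this is how Lemma \ref{spmultlem} is proved in \cite{DP}. Without invoking finite propagation speed (or directly citing that lemma), your central estimate is unproved, and the complex-time heat-semigroup substitutes one might try instead lose derivatives and do not reach the stated threshold $\alpha>\f{n(2-p)}{2p}+\f{1}{2}$.
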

Before giving the proof we state the following result in \cite{DP}.
\begin{lem}\label{spmultlem}
Let $\gamma>1/2$ and $\beta>0$. Then there exists a constant $C>0$ such that for every function $F\in W^{2,\gamma+\beta/2}$ and every function $g\in L^2(X)$ supported in the ball $B$, we have
$$
\int_{d(x,x_B)>2r_B}|F(2^j\sqrt{L})g(x)|^2\Big(\f{d(x,x_B)}{r_B}\Big)^{\beta}d\mu(x)\leq C(r_B2^j)^{-\beta}\|F\|_{W^{2,\gamma+\beta/2}}^2\|g\|_{L^2}^2
$$
for $j\in \mathbb{Z}$.
\end{lem}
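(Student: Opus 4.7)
The plan is to combine finite speed of propagation for the wave operator $\cos(t\sqrt{L})$ with a Fourier inversion representation of the spectral multiplier, converting the $W^{2,\gamma+\beta/2}$ regularity of $F$ into dyadic annular $L^2$--decay in distance from $\supp g$.

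First, I would invoke the finite speed of propagation for $e^{it\sqrt{L}}$: under the Davies--Gaffney hypothesis $\bf(H2)$, a result of Sikora guarantees that if $g$ is supported in $B=B(x_B,r_B)$, then $e^{it\sqrt{L}}g$ is supported in $\{x:d(x,x_B)\le r_B+|t|\}$. Second, using Fourier inversion for $\tilde{F}(\lambda):=F(2^j\lambda)$ together with the spectral theorem (legitimate because $L$ is self-adjoint by $\bf(H1)$) and the change of variables $\tau=2^j\sigma$, I can write, at least formally,
\begin{equation*}
F(2^j\sqrt{L})g=\frac{1}{2\pi}\int_{\mathbb{R}}\hat F(\sigma)\,e^{i2^j\sigma\sqrt{L}}g\,d\sigma.
\end{equation*}
Third, I would partition $\{d(x,x_B)>2r_B\}$ into dyadic annuli $A_k=\{2^{k+1}r_B\le d(x,x_B)<2^{k+2}r_B\}$, $k\ge 0$. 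Finite propagation forces $e^{i2^j\sigma\sqrt{L}}g(x)=0$ for $x\in A_k$ unless $|\sigma|\ge T_k:=2^kr_B/2^j$, so Minkowski's inequality and the $L^2$--unitarity of $e^{it\sqrt{L}}$ give
\begin{equation*}
\|F(2^j\sqrt{L})g\|_{L^2(A_k)}\le C\|g\|_{L^2}\int_{|\sigma|\ge T_k}|\hat F(\sigma)|\,d\sigma.
\end{equation*}

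Fourth, the tail integral of $|\hat F|$ is absorbed by the Sobolev norm via Cauchy--Schwarz and Plancherel, using $\|F\|_{W^{2,\gamma+\beta/2}}=\|(1+\sigma^2)^{(\gamma+\beta/2)/2}\hat F\|_{L^2}$:
\begin{equation*}
\int_{|\sigma|\ge T_k}|\hat F(\sigma)|\,d\sigma\le\|F\|_{W^{2,\gamma+\beta/2}}\Big(\int_{|\sigma|\ge T_k}(1+\sigma^2)^{-(2\gamma+\beta)}\,d\sigma\Big)^{1/2}.
\end{equation*}
Since $2\gamma+\beta>1$, the last factor behaves like $T_k^{1/2-\gamma-\beta/2}$ for $T_k\ge 1$ and is $O(1)$ for $T_k<1$. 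Finally, summing over annuli,
\begin{equation*}
\int_{d(x,x_B)>2r_B}|F(2^j\sqrt{L})g|^2\Big(\f{d(x,x_B)}{r_B}\Big)^{\beta}d\mu\le C\sum_{k\ge 0}2^{k\beta}\|F(2^j\sqrt{L})g\|_{L^2(A_k)}^2,
\end{equation*}
and inserting the annular bound yields a geometric series whose terms carry the exponent $k(\beta+1-2\gamma-\beta)=k(1-2\gamma)$ in the $T_k\ge 1$ regime, convergent precisely because $\gamma>1/2$. Tracking the factor $(r_B/2^j)^{1-2\gamma-\beta}$ pulled out of the sum and combining with the contribution of the $T_k<1$ terms produces the advertised $(r_B2^j)^{-\beta}$ prefactor.

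The hard part will be making the Fourier-side representation rigorous: a general $F\in W^{2,\gamma+\beta/2}$ need not have $\hat F\in L^1$, so the integral above is only a priori a tempered-distribution pairing. The standard remedy is to mollify $F$ by a Schwartz approximation $F_\varepsilon$ for which the inversion is classical, run the entire argument for $F_\varepsilon$, and then pass to the limit using the $L^2$ continuity of the spectral functional calculus and monotone/dominated convergence on the Fourier side. A secondary technical point is the careful handling of the threshold $T_k\approx 1$ when summing, which requires splitting the sum into the two regimes $T_k\ge 1$ and $T_k<1$ and treating each with the appropriate tail estimate from Step four.
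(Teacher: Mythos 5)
Your strategy is the standard one and is essentially the argument in \cite{DP} (note the paper itself gives no proof of this lemma; it only cites \cite{DP}), but there are two problems. The first is technical: under $\bf{(H2)}$ finite propagation speed holds for the wave propagator $\cos(t\sqrt{L})$, not for the half--wave group $e^{it\sqrt{L}}$ (already for $L=-\Delta$ on $\mathbb{R}$ the kernel of $\sin(t\sqrt{L})$ is not compactly supported). You must first replace $F$ by its even extension --- harmless, since $\sqrt{L}\geq 0$ means only $F|_{[0,\infty)}$ enters the functional calculus, and this is exactly why the paper ``extends $F_\ell$ to the even function'' in the proof of Proposition \ref{specmultiplierpro} --- after which $\widehat{F}$ is even and your representation becomes $F(2^j\sqrt{L})g=\frac{1}{2\pi}\int\widehat{F}(\sigma)\cos(2^j\sigma\sqrt{L})g\,d\sigma$, to which finite propagation applies. (Your worry about $\widehat F\notin L^1$ is unfounded: since $\gamma+\beta/2>1/2$, Cauchy--Schwarz gives $\|\widehat F\|_{L^1}\leq C\|F\|_{W^{2,\gamma+\beta/2}}$, so no mollification is needed.)

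The second problem is the decisive one: your last step, ``combining the two regimes produces the advertised $(r_B2^j)^{-\beta}$ prefactor,'' is false, and cannot be repaired, because the statement as printed is false. Carrying your exponents through, with $s=\gamma+\beta/2$ and $T_k=2^kr_B/2^j$ one gets
$$
\sum_{k\geq 0}2^{k\beta}\min\bigl(1,\,T_k^{1-2s}\bigr)\leq C\Bigl(\frac{2^j}{r_B}\Bigr)^{\beta},
$$
i.e.\ the bound $C(r_B^{-1}2^j)^{\beta}\|F\|^2_{W^{2,\gamma+\beta/2}}\|g\|_{L^2}^2$, which differs from the printed $C(r_B2^j)^{-\beta}$ by a factor $2^{2j\beta}$. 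The printed version genuinely fails: take $L=-\Delta$ on $\mathbb{R}$, $g=\mathbf{1}_{[-1,1]}$, $B=B(0,1)$ and a fixed nonnegative even Schwartz function $F$ with $F(0)=1$; then $|F(2^j\sqrt{L})g(x)|\gtrsim 2^{-j}$ for $2\leq |x|\leq 2^j/10$, so the left-hand side is $\gtrsim 2^{j(\beta-1)}$ while the claimed right-hand side is $\lesssim 2^{-j\beta}$, a contradiction for $\beta>1/2$ and $j$ large. The correct factor is $(r_B^{-1}2^j)^{\beta}=(r_B2^{-j})^{-\beta}$, which is also the form the paper actually uses in Proposition \ref{specmultiplierpro}, where the lemma is applied to $F_\ell(2^{-\ell}\sqrt{L})$ and produces $(r_B2^{\ell})^{-\beta}$. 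So your method proves the intended estimate, but you should carry the bookkeeping to the end and record the factor $(2^j/r_B)^{\beta}$ explicitly rather than asserting agreement with a misprinted constant.
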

\emph{Proof of Proposition \ref{specmultiplierpro}:}

Obviously, since $F(\sqrt{L})$ is bounded on $L^2(X)$, (\ref{eq1 spectral}) holds for $j=0, 1, 2 $. For $j>2$, we will exploit some ideas in \cite{DP} to our situation.\\
Fix $\epsilon>0$ and $\gamma>1/2$ such that $\gamma+\epsilon+\f{n(2-p)}{2p}=\alpha$. Set $\beta= \f{n(2-p)}{p}+2\epsilon$. Then $\gamma +\beta/2=\alpha$. Let $a=L^{m}b$ be a $(p,2,m)$-atom supported in $B$ with $m>\beta/4$ and $\ell_0=-\log_2r_B$.\\
Fix a function $\phi \in C_c^\infty(\f{1}{2},2)$ such that
$$
\sum_{\ell\in \mathbb{Z}}\phi(2^{-\ell}\lambda)=1 \ \text{for} \
\lambda>0.
$$
Then, one has
\begin{equation*}
\begin{aligned}
F(\sqrt{L})a=\sum_{\ell\geq
\ell_0}\phi(2^{-\ell}\sqrt{L})F(\sqrt{L})a+\sum_{\ell<
\ell_0}\phi(2^{-\ell}\sqrt{L})L^m F(\sqrt{L})b.
\end{aligned}
\end{equation*}
Recall that by definition of $(p,2,m)$ atoms,
$$
\|a\|_{L^2}\leq V(B)^{\f{1}{2}-\f{1}{p}} \ \text{and} \
\|b\|_{L^2}\leq r_B^{2m}V(B)^{\f{1}{2}-\f{1}{p}}.
$$
Set
$$
F_\ell(\lambda)=
\begin{cases}
F(2^\ell\lambda)\phi(\lambda) \ &, \ell\geq \ell_0\\
2^{2m\ell} F(2^\ell\lambda)\lambda^{m}\phi(\lambda) \ &, \ell< \ell_0.
\end{cases}
$$
and extend $F_\ell$ to the even function. Obviously,
$$
\|F_\ell\|_{W^{2,\alpha}}\leq
\begin{cases}
C \ &, \ell\geq \ell_0\\
C2^{2m\ell} \ &, \ell< \ell_0.
\end{cases}
$$
Applying Lemma \ref{spmultlem}, we obtain
\begin{equation*}
\begin{aligned}
\Big(\int_{S_j(B)}&|F(\sqrt{L})a(x)\Big(\f{d(x,x_B)}{r_B}\Big)^{\beta}d\mu(x)\Big)^{1/2}\\
& \leq C\sum_{\ell\geq \ell_0}(r_B2^\ell)^{-\beta/2}\|F_\ell\|_{W^{2,\alpha}}\|a\|_{L^2}+C \sum_{\ell< \ell_0}(r_B2^\ell)^{-\beta/2}\|F_\ell\|_{W^{2,\alpha}}\|b\|_{L^2}\\
& \leq C [\sum_{\ell\geq \ell_0}(r_B2^\ell)^{-\beta/2}+\sum_{\ell< \ell_0}(r_B2^\ell)^{2m-\beta/2}]V(B)^{\f{1}{2}-\f{1}{p}}\\
& \leq C V(B)^{\f{1}{2}-\f{1}{p}}.
\end{aligned}
\end{equation*}
This implies that
$$
\Big(\int_{S_j(B)}|F(\sqrt{L})a|^2 d\mu\Big)^{1/2}\leq
C2^{-j\beta/2}V(B)^{\f{1}{2}-\f{1}{p}}.
$$
The proof is complete.
\begin{flushright}
    $\Box$
\end{flushright}

From Proposition \ref{specmultiplierpro} and Theorem \ref{thm0} we obtain the following result.
\begin{thm}\label{spectralthm1}
\begin{enumerate}[(i)]
\item Assume that $L$ satisfies conditions $\bf{(H1)}$ and $\bf{(H2)}$.
Let $F$ be a bounded function defined on $(0,\infty)$ such that for some real number $\alpha>\f{n(2-p)}{2p}+\f{1}{2}$ and any non-zero function $\eta\in C_c^\infty(\f{1}{2},2)$ satisfies the condition (\ref{specequa}).
Then the multiplier operator $F(L)$ is bounded from $H^p_L(X)$ to $L^p(X)$ for $0<p<1$.

\item Under the same assumptions as $(i)$, the operators $F(L)$ is bounded from  $H^p_L(X)$ to
$H^p_L(X)$ for all $0<p\leq 1$.

\item Assume that $L$ satisfies $\bf{(H1)}$ and $\bf{(H3)}$. Let $F$ be a bounded function defined on $(0,\infty)$ such that for some real number $\alpha>\f{n}{2}+\f{1}{2}$ and any non-zero function $\eta\in C_c^\infty(\f{1}{2},2)$ there exists a constant $C_\eta$ such that
\begin{equation}\label{specequa1}
\sup_{t>0}\|\eta(\cdot)F(t\cdot)\|_{W^{2,\alpha}(\mathbb{R})}\leq C_\eta.
\end{equation}
Then the commutator of $F(L)$ and a BMO function $b$ is bounded from $H^1_L(X)$ to $L^{1,\infty}(X)$.
\end{enumerate}
\end{thm}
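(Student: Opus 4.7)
The plan is to derive all three parts of Theorem~\ref{spectralthm1} from Proposition~\ref{specmultiplierpro}, combined either with Theorem~\ref{thm0} (for (i) and (iii)) or with the molecular characterization of $H^p_L(X)$ in Proposition~\ref{mol-pro} (for (ii)). The minor mismatch between the proposition's formulation $F(\sqrt L)$ and the theorem's $F(L)$ is resolved by the substitution $G(\lambda):=F(\lambda^2)$, which preserves the Sobolev hypothesis because $\lambda\mapsto\lambda^2$ is a smooth diffeomorphism on dyadic annuli bounded away from $0$; also, (\ref{specequa}) forces $\|F\|_\infty<\infty$, so $F(L)$ is bounded on $L^2(X)$ by the spectral theorem, supplying the $L^2$-hypothesis of Theorem~\ref{thm0}. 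For part (i), Proposition~\ref{specmultiplierpro} yields the off-diagonal estimate $\bigl(\int_{S_j(B)}|F(L)a|^2\,d\mu\bigr)^{1/2}\le C\,2^{-j\beta/2}V(B)^{1/2-1/p}$ with $\beta>n(2-p)/p$, and rewriting $2^{-j\beta/2}=2^{-2j(\beta/4)}$ with $\beta/4>n(2-p)/(4p)$ shows that this matches the variant of (\ref{cond0}) allowed by Remark~\ref{rem0'}(b); Theorem~\ref{thm0}(i) then delivers $F(L):H^p_L(X)\to L^p(X)$.

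For part (ii), I will show that $F(L)$ sends each $(p,2,m)$-atom $a$ to a uniform multiple of a $(p,2,m,\epsilon)$-molecule for some $\epsilon>0$; Proposition~\ref{mol-pro} then gives $\|F(L)a\|_{H^p_L}\le C$, and a standard atomic-decomposition argument transfers this bound to the full space, yielding $F(L):H^p_L(X)\to H^p_L(X)$. Writing $a=L^m b$ and setting $\tilde b:=F(L)b$, spectral calculus yields $F(L)a=L^m\tilde b$ and $(r_B^2L)^k\tilde b=r_B^{2k}F(L)(L^k b)$ for $k=0,\dots,m$. By Definition~\ref{def2.3}, $L^k b$ is supported in $B$ with $\|L^k b\|_{L^2}\le r_B^{2(m-k)}V(B)^{1/2-1/p}$, so the proof of Proposition~\ref{specmultiplierpro}---its dyadic splitting at $\ell_0=-\log_2 r_B$ together with Lemma~\ref{spmultlem}---goes through with $L^k b$ replacing $a$, yielding
\[
\|F(L)(L^k b)\|_{L^2(S_j(B))}\le C\,r_B^{2(m-k)}\,2^{-j\beta/2}V(B)^{1/2-1/p}.
\]
The doubling inequality $V(2^jB)\le C\,2^{jn}V(B)$ then absorbs the factor $2^{jn(1/p-1/2)}$ needed to convert $V(B)^{1/2-1/p}$ into $V(2^jB)^{1/2-1/p}$, leaving molecular decay with exponent $\epsilon=\beta/2-n(2-p)/(2p)>0$. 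This bookkeeping is the main obstacle of the proof: one must check that the $\ell_0$-splitting and the $F_\ell$ Sobolev estimates in the proof of the proposition survive the substitution of $L^k b$ for $a$ with the correct tracking of the $r_B^{2(m-k)}$ prefactor, and that the final volume comparison indeed leaves $\epsilon$ strictly positive.

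For part (iii), at $p=1$ the hypothesis $\alpha>n/2+1/2$ is precisely $\alpha>n(2-p)/(2p)+1/2$, so Proposition~\ref{specmultiplierpro} supplies (\ref{cond0}) in the form allowed by Remark~\ref{rem0'}(b). Under ${\bf(H3)}$ the classical spectral multiplier theorem (\cite{DM}) gives the weak-type $(1,1)$ bound for $F(L)$. With both hypotheses of Theorem~\ref{thm0}(ii) in hand, the commutator bound $[b,F(L)]:H^1_L(X)\to L^{1,\infty}(X)$ follows immediately.
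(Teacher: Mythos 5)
Parts (i) and (iii) of your proposal are correct and follow the paper's route: Proposition~\ref{specmultiplierpro} supplies condition (\ref{cond0}) via Remark~\ref{rem0'}(b), and for (iii) the weak-type $(1,1)$ bound under $\bf{(H3)}$ (the paper cites \cite{DOS}) finishes it through Theorem~\ref{thm0}(ii). The reduction $G(\lambda)=F(\lambda^2)$ to match $F(\sqrt{L})$ with $F(L)$ is fine, as you note.

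Part (ii) has a genuine gap. You set $a=L^m b$, $\tilde b=F(L)b$, $\tilde a=L^m\tilde b$, and claim that the proof of Proposition~\ref{specmultiplierpro} ``goes through with $L^k b$ replacing $a$.'' It does not. That proof decomposes $F(\sqrt{L})a$ dyadically at $\ell_0=-\log_2 r_B$ and \emph{crucially} uses the atom structure $a=L^m b$ for the low-frequency sum $\ell<\ell_0$: writing $\phi(2^{-\ell}\sqrt{L})L^m F(\sqrt{L})b$, the factored-out $L^m$ produces the factor $2^{2m\ell}$ in $\|F_\ell\|_{W^{2,\alpha}}$, and the sum $\sum_{\ell<\ell_0}(r_B 2^\ell)^{2m-\beta/2}$ converges precisely because $m>\beta/4$. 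If you replace $a$ by $L^k b$ and run the same scheme, the low-frequency term carries only $L^k$, so the exponent becomes $2k-\beta/2$, and the sum $\sum_{\ell<\ell_0}(r_B 2^\ell)^{2k-\beta/2}$ \emph{diverges} for $k<\beta/4$---in particular for $k=0$, which you must control to get the $\ell=0$ molecular estimate on $\tilde b=F(L)b$. The intermediate display $\|F(L)(L^k b)\|_{L^2(S_j(B))}\le C\,r_B^{2(m-k)}2^{-j\beta/2}V(B)^{1/2-1/p}$ is therefore not established by the argument as written.

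The paper circumvents this by starting from a $(p,2,2m)$-atom $a=L^{2m}b$ (permissible since the $H^p_{L,at,m}$ norms are equivalent for all admissible $m$) and setting $\widetilde b=F(\sqrt{L})L^m b$, so that $\widetilde a=L^m\widetilde b$ supplies the $m$ powers for the molecular structure while $\widetilde b$ still has $m$ additional powers of $L$ acting on $b$. Then $(r_B^2L)^\ell\widetilde b=r_B^{2\ell}F(\sqrt{L})L^{\ell+m}b$, and in the low-frequency regime one can \emph{always} factor out $L^m$ (giving $\phi(2^{-j}\sqrt{L})L^m F(\sqrt{L})L^\ell b$), so that $\|F_j\|_{W^{2,\alpha}}\le C2^{2mj}$ and convergence needs only the single condition $m>\beta/4$, independent of $\ell$. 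You should replace your choice $\tilde b=F(L)b$ by this one to make part (ii) correct.
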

\begin{proof} (i) is a direct consequence of Proposition \ref{specmultiplierpro} and Theorem \ref{thm0}.

\smallskip

To prove (iii), we note that the Gaussian upper bound condition $\bf{(H3)}$ together with (\ref{specequa1}) implies that
 $F(L)$ is of weak type $(1,1)$, see \cite{DOS}. Therefore, by Proposition \ref{specmultiplierpro} and
 Theorem \ref{thm0}, the commutator of $F(L)$ and a BMO function $b$ is bounded from $H^1_L(X)$ to $L^{1,\infty}(X)$.

\smallskip
Concerning  (ii), we can prove (ii)    by first
showing  that $F(L)$ maps a $(1,2,m)$ atom to $H^1_L(X)$. It then follows that $F(L)$ is bounded on $H^p_L(X)$ for all $0<p\leq 1$. We sketch the proof here  for reader's convenience.

\smallskip

Due to Proposition \ref{mol-pro} and the fact that the condition
(\ref{specequa}) is invariant under the change of variable $\lambda
\mapsto \lambda^s$, it suffices to show that there exists
$\epsilon>0$ such that for any $(p,2,2m)$-atom $a$ associated to the ball $B$, the
function
$$
\widetilde{a}=F(\sqrt{L})a
$$
is a multiple of $(p,2,m,\epsilon)$-molecule.\\
Fix $\epsilon>0$ and $\gamma>1/2$ such that $\gamma+\epsilon+\f{n(2-p)}{2p}=\alpha$. Set $\beta= \f{n(2-p)}{p}+2\epsilon$. Then $\gamma +\beta/2=\alpha$. Let $j_0=-\log_2r_B$, $a=L^{2m}b$ with $m>\beta/4$ and $\widetilde{b}=F(\sqrt{L})L^mb$. Then $\widetilde{a}= L^m\widetilde{b}$. We need to verify that
\begin{equation}\label{eq0-spectral}
\|(r_B^2L)^\ell \widetilde{b}\|_{L^2(S_k(B))}\leq C2^{-k\epsilon}r_B^{2m}V(2^kB)^{1/2-1/p}
\end{equation}
for all $0\leq \ell\leq m$ and $k=0,1,\ldots$.

\smallskip
It is easy to check that (\ref{eq0-spectral}) holds for $k=0,1,2$. To check (\ref{eq0-spectral}) for $k\geq 3$, we fix a function $\phi \in C_c^\infty(\f{1}{2},2)$ such that
$$
\sum_{j\in \mathbb{Z}}\phi(2^{-j}\lambda)=1 \ \text{for} \
\lambda>0.
$$
Then, for $0\leq \ell\leq m$, one has
\begin{equation*}
\begin{aligned}
(r_B^2L)^\ell\widetilde{b}=&r_B^{2\ell}\sum_{j\geq j_0}\phi(2^{-j}\sqrt{L})F(\sqrt{L})L^{\ell+m}b\\
&+r_B^{2\ell}\sum_{j< j_0}\phi(2^{-j}\sqrt{L})L^m F(\sqrt{L})L^{\ell}b\\
&=r_B^{2\ell}\sum_{j\geq j_0}\phi(2^{-j}\sqrt{L})
F(\sqrt{L})b_1+r_B^{2\ell}\sum_{j< j_0}\phi(2^{-j}\sqrt{L})L^m
F(\sqrt{L})b_2.
\end{aligned}
\end{equation*}
It is easy to see that
$$
\|b_1\|_{L^2}\leq r_B^{2m-2\ell}V(B)^{\f{1}{2}-\f{1}{p}} \ \text{and} \
\|b_2\|_{L^2}\leq r_B^{4m-2\ell}V(B)^{\f{1}{2}-\f{1}{p}}.
$$
Set
$$
F_j(\lambda)=
\begin{cases}
F(2^j\lambda)\phi(\lambda) \ &, j\geq j_0\\
2^{2mj} F(2^j\lambda)\lambda^{2m}\phi(\lambda) \ &, j< j_0.
\end{cases}
$$
and extend $F_j$ to the even function. Obviously,
$$
\|F_j\|_{W^{2,\alpha}}\leq
\begin{cases}
C \ &, j\geq j_0\\
C2^{2mj} \ &, j< j_0.
\end{cases}
$$
Applying Lemma \ref{spmultlem}, we obtain
\begin{equation*}
\begin{aligned}
\Big(\int_{d(x,x_B)>2r_B} & |(r_B^2L)^\ell \widetilde{b}(x)|^2\Big(\f{d(x,x_B)}{r_B}\Big)^{\beta}d\mu(x)\Big)^{1/2}\\
& \leq Cr_B^{2\ell}\sum_{j\geq j_0}(r_B2^j)^{-\beta/2}\|F_j\|_{W^{2,\alpha}}\|b_1\|_{L^2}
+Cr_B^{2\ell}\sum_{j< j_0}(r_B2^j)^{-\beta/2}\|F_j\|_{W^{2,\alpha}}\|b_2\|_{L^2}\\
& \leq C r_B^{2m}V(B)^{\f{1}{2}-\f{1}{p}}.
\end{aligned}
\end{equation*}

So, for $k\geq 3$, one has
\begin{equation*}
\begin{aligned}
\|(r_B^2L)^\ell\widetilde{b}\|_{L^2(S_k(B))}
& \leq C 2^{-j(\beta/2 -n(\f{1}{p}-\f{1}{2}))}r_B^{2m}V(2^kB)^{\f{1}{2}-\f{1}{p}}\\
& \leq C 2^{-j\epsilon}r_B^{2m}V(2^kB)^{\f{1}{2}-\f{1}{p}}\\
\end{aligned}
\end{equation*}
This implies that $\widetilde{a}=F(\sqrt{L})a$ is a multiple of
a $(p,2,m,\epsilon)$-molecule and the multiple constant is independent
of $a$. Our proof is complete.
\end{proof}

\begin{rem}

i) When $p=1$,  it was shown in \cite{DP} that $F(L)$ maps a $(1,2,m)$ atom into $H^1_L(X)$,
but the boundedness of $F(L)$ on the Hardy spaces was only obtained under the extra assumption that
the measure of any ball $B(x,r)$ has a lower bound $c r^{\kappa}$ for some $\kappa > 0$, see \cite{DP}.
In this article, using the fact that  the convergence in the atomic decomposition in
Hardy spaces $H^p_L(X)$ is in the sense of $L^2(X)$, we can obtain the boundedness of $F(L)$ on $H^p_L(X)$
without the assumption of the measure of a ball having the lower bound $c r^{\kappa}$.

\medskip

ii) The condition (\ref{specequa1}) in (iii) of Theorem \ref{spectralthm1} can be replaced by the following condition
$$\sup_{t>0}\|\eta(\cdot)F(t\cdot)\|_{W^{\vc,\alpha}(\mathbb{R})}\leq C_\eta$$ with $\alpha>n/2$. In this situation, we can use the following estimates in \cite[Lemma 4.3]{DOS} instead of Lemma \ref{spmultlem} to obtain Proposition \ref{specmultiplierpro}. Since the proof is quite similar to that of Proposition \ref{specmultiplierpro}, we omit details here.

\begin{lem}\label{lem1}
Suppose that $L$ satisfies $\bf{(H1)}$ and $\bf{(H3)}$,
 $R>0$ and $\alpha>0$. Then for any $\epsilon>0$, there exists a
constant $C=C(\alpha, \epsilon)$ such that
\begin{eqnarray}\label{e4.2}
\int_X \big|K_{F(\sqrt{L})}(x,y)\big|^2 \big(1+Rd(x,y)\big)^{\alpha}
d\mu(x)\leq \f{C}{ V(y, R^{-1})}
 \|\delta_{R} F\|^2_{W^{\vc,\f{\alpha}{2} +\epsilon}}
\end{eqnarray}
for all Borel functions $F$ such that {\rm supp}$F\subseteq [R/4,
R]$, where $K_{F(\sqrt{L})}(x,y)$ is the associated kernel to $F(\sqrt{L})$.
\end{lem}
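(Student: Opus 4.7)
My plan is to derive Lemma \ref{lem1} by combining a Plancherel-type $L^2$ estimate for the kernel of $F(\sqrt{L})$ with the finite-speed-of-propagation property of the wave operator $\cos(\tau\sqrt{L})$. Under the Gaussian bound ${\bf(H3)}$, a standard argument (going back to Sikora) yields
\[
\supp K_{\cos(\tau\sqrt{L})} \subseteq \{(x,y)\in X\times X : d(x,y) \leq |\tau|\},
\]
which is the central geometric input.

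\medskip

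\emph{Step 1: Unweighted Plancherel estimate.} I would first establish that for every bounded Borel $F$ with $\supp F \subset [R/4, R]$,
\[
\int_X |K_{F(\sqrt{L})}(x,y)|^2\, d\mu(x) \leq \frac{C}{V(y, R^{-1})}\, \|\delta_R F\|_\infty^2.
\]
The idea is to factor $F(\sqrt{L}) = H(\sqrt{L})\, e^{-L/R^2}$, where $H(\lambda) := F(\lambda)\, e^{\lambda^2/R^2}$ satisfies $\|H\|_\infty \lesssim \|F\|_\infty$ on $\supp F$. The spectral theorem gives $L^2$-boundedness of $H(\sqrt{L})$ with norm $\|H\|_\infty$, and the Gaussian upper bound ${\bf(H3)}$ yields $\|K_{e^{-L/R^2}}(\cdot, y)\|_{L^2}^2 \leq C\, V(y, R^{-1})^{-1}$.

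\medskip

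\emph{Step 2: Dyadic annular decomposition.} Decompose $X = \bigsqcup_{k\geq 0} A_k$ with $A_0 = B(y, R^{-1})$ and $A_k = \{x : 2^{k-1}R^{-1} \leq d(x,y) < 2^k R^{-1}\}$ for $k\geq 1$, so that $(1+Rd(x,y))^\alpha \lesssim 2^{k\alpha}$ on $A_k$. It then suffices to show
\[
\int_{A_k} |K_{F(\sqrt{L})}(x,y)|^2\, d\mu(x) \leq \frac{C\, 2^{-k(\alpha + 2\epsilon)}}{V(y, R^{-1})}\, \|\delta_R F\|_{W^{\infty, \alpha/2+\epsilon}}^2.
\]
For small $k$ this follows from Step 1 directly. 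For large $k$, I would exploit the Fourier representation
\[
F(\sqrt{L}) = \frac{1}{2\pi}\int_{\mathbb{R}} \hat{F}(\tau)\cos(\tau\sqrt{L})\, d\tau,
\]
and use finite propagation: for $x\in A_k$ the integrand vanishes when $|\tau| < 2^{k-1}R^{-1}$. Inserting a smooth cutoff $\eta_k$ supported in $\{|\tau| \geq 2^{k-2}R^{-1}\}$, the kernel on $A_k$ agrees with that of $G_k(\sqrt{L})$, where $G_k$ is the inverse Fourier transform of $\eta_k \hat{F}$. Applying Step 1 to $G_k$ reduces matters to the bound
\[
\|\delta_R G_k\|_\infty \leq C\, 2^{-k(\alpha/2+\epsilon)}\, \|\delta_R F\|_{W^{\infty, \alpha/2+\epsilon}}.
\]

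\medskip

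\emph{Main obstacle.} The crux is the last inequality, which asserts a quantitative high-frequency decay for the Fourier transform of $\delta_R F$ in terms of an $L^\infty$-based Sobolev norm. To handle it I would first rescale $R$ out, so that $\delta_R F$ lives on the fixed compact interval $[1/4, 1]$; use compact support to pass from $W^{\infty, \alpha/2 + \epsilon}$ to $W^{2, \alpha/2 + \epsilon/2}$ (this is where the strict inequality $\epsilon > 0$ is consumed); apply Plancherel in $\tau$ to control $\int(1+|\tau|)^{\alpha+\epsilon}|\hat{F}(\tau)|^2\, d\tau$; and finally estimate the tail $|\tau| \geq 2^{k-2}R^{-1}$ via Cauchy--Schwarz, picking up the factor $2^{-k(\alpha/2+\epsilon)}$. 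Summing the resulting geometric series in $k$ delivers the desired weighted Plancherel estimate. The main technical care lies in choosing the cutoff $\eta_k$ so that its contribution to the Sobolev norms is harmless, and in verifying that the Sobolev embeddings on the interval $[1/4,1]$ are uniform in $k$.
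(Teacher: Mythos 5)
The paper does not prove this lemma; it is quoted verbatim from Duong--Ouhabaz--Sikora \cite[Lemma 4.3]{DOS}, so there is no internal proof to compare against. Your overall architecture---finite speed of propagation for $\cos(\tau\sqrt{L})$, an unweighted on-diagonal Plancherel estimate, dyadic annuli in $d(x,y)$, and high-frequency cut-offs $\eta_k$ of $\widehat{F}$---is indeed the standard Sikora-style route and is the right skeleton. But your ``Main obstacle'' step contains a quantitative error that actually kills the argument for small $\epsilon$.

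You claim that Cauchy--Schwarz on the tail gives
$\|\delta_R G_k\|_\infty \lesssim 2^{-k(\alpha/2+\epsilon)}\|\delta_R F\|_{W^{\infty,\alpha/2+\epsilon}}$.
Rescale so $R=1$. You pass to $\|F\|_{W^{2,s}}$ with $s=\alpha/2+\epsilon/2$, then bound $\|G_k\|_\infty\leq\frac{1}{2\pi}\int_{|\tau|\geq 2^{k-2}}|\widehat F(\tau)|\,d\tau$. Cauchy--Schwarz against the weight $(1+\tau^2)^{s}$ gives
$\int_{|\tau|\geq\rho}|\widehat F(\tau)|\,d\tau\leq\Big(\int_{|\tau|\geq\rho}(1+\tau^2)^{-s}d\tau\Big)^{1/2}\|F\|_{W^{2,s}}\sim\rho^{-(s-1/2)}\|F\|_{W^{2,s}},$
valid only when $s>1/2$, and this produces a decay rate of $2^{-k(\alpha/2+\epsilon/2-1/2)}$, not $2^{-k(\alpha/2+\epsilon)}$. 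The half-derivative loss is not cosmetic: after squaring and summing against the dyadic weight $2^{k\alpha}$ from the annulus $A_k$, the geometric series converges only when $\alpha/2+\epsilon/2-1/2>\alpha/2$, i.e. $\epsilon>1$. The lemma is asserted for every $\epsilon>0$, so your argument does not close. (It is also moot when $\alpha/2+\epsilon/2\leq 1/2$, where Cauchy--Schwarz against $(1+\tau^2)^{-s}$ fails outright.) The sharp decay $2^{-k(\alpha/2+\epsilon)}$ for the high-frequency projection is a Besov/Triebel--Lizorkin fact, not a Cauchy--Schwarz one: one should decompose $\eta_k\widehat F$ over Littlewood--Paley blocks $\psi(2^{-j}\tau)$, $j\geq k-O(1)$, and use that $W^{\infty,s}=F^s_{\infty,2}\hookrightarrow B^s_{\infty,\infty}$ to get $\|\check\psi_j*F\|_\infty\lesssim 2^{-js}\|F\|_{W^{\infty,s}}$, which sums to $2^{-ks}$ with no $1/2$ loss.

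There is a second, smaller gap in applying your Step~1 to $G_k$. Your unweighted Plancherel estimate is proved via the factorization $F(\sqrt{L})=H(\sqrt{L})e^{-L/R^2}$ with $H(\lambda)=F(\lambda)e^{\lambda^2/R^2}$, where $\|H\|_\infty\lesssim\|F\|_\infty$ \emph{precisely because} $\supp F\subset[R/4,R]$. But $G_k=\mathcal{F}^{-1}(\eta_k\widehat F)$ is not compactly supported, so $G_k(\lambda)e^{\lambda^2/R^2}$ is in general unbounded and the factorization no longer controls it. This is repairable because $G_k$ decays super-polynomially away from $[R/4,R]$ (the cut-off $1-\eta_k$ is smooth and compactly supported, so its inverse transform is Schwartz), but one then has to estimate $\|G_k(\sqrt{L})\delta_y\|_{L^2}^2=\int|G_k(\lambda)|^2\,d\sigma_y(\lambda)$ by slicing the spectral measure $\sigma_y$ into dyadic bands $[2^mR,2^{m+1}R]$, using $\sigma_y([0,\Lambda])\lesssim V(y,\Lambda^{-1})^{-1}$ and the doubling property, and trading the super-polynomial decay of $G_k$ against the $2^{mn}$ growth of $\sigma_y$. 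You should spell this out; as written, Step~1 is quoted for $G_k$ but does not apply to it.
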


iii) The approach in this paper can be applied to consider the boundedness of the commutators of generalized fractional integrals on Hardy spaces associated to operators. This will appear in the forthcoming paper \cite{A2}.
\end{rem}

\textbf{Acknowledgement.} The second author was supported by ARC (Australian Research Council). The first author would like to thank Luong Dang Ky for his useful discussion. The authors would like to thank the referee for his/her useful comments and suggestions to improve the paper.

\end{document}